\newtheorem{lemma}{Lemma}[section]
\newtheorem{theorem}{Theorem}
\newtheorem{corollary}[lemma]{Corollary}
\newtheorem{proposition}[lemma]{Proposition}
\theoremstyle{remark}
\newtheorem{remark}[lemma]{Remark}
\renewcommand{\H}{\mathbb H}
\newcommand{\A}{{\mathbb A}}
\newcommand{\Q}{{\mathbb Q}}
\newcommand{\Z}{{\mathbb Z}}
\newcommand{\R}{{\mathbb R}}
\newcommand{\C}{{\mathbb C}}
\newcommand{\bs}{\backslash}
\newcommand{\p}{\mathfrak p}
\newcommand{\GL}{{\rm GL}}
\newcommand{\SL}{{\rm SL}}
\newcommand{\sgn}{{\rm sgn}}
\newcommand{\trace}{{\rm tr}}
\def\PSL{\operatorname{PSL}}
\def\SL{\operatorname{SL}}
\def\GL{\operatorname{GL}}
\renewcommand{\Re}{\mathrm{Re}}
\renewcommand{\Im}{\mathrm{Im}}
\begin{document}

\bibliographystyle{alpha}

\title{On sup-norms of cusp forms of powerful level}

\author{Abhishek Saha}
\address{Department of Mathematics\\
  University of Bristol\\
  Bristol BS81TW\\
  UK} \email{abhishek.saha@bris.ac.uk}

\begin{abstract} Let $f$ be an $L^2$-normalized Hecke--Maass cuspidal newform of level $N$
and Laplace eigenvalue $\lambda$. It is shown that $\|f\|_\infty \ll_{\lambda, \epsilon} N^{-1/12 + \epsilon}$ for any $\epsilon>0$. The exponent is further improved in the case when $N$ is not divisible by ``small squares". Our work extends and generalizes previously known results in the special case of $N$ squarefree.
\end{abstract}
\maketitle

\def\eachnotany{~{each}~}

\section{Introduction}The problem of bounding the sup-norms of $L^2$-normalized cuspidal automorphic forms has been much studied recently, beginning with the work of Iwaniec and Sarnak~\cite{iwan-sar-85}, who proved the first non-trivial bound in the eigenvalue-aspect for Hecke--Maass cusp forms. Since then, this question has been considered in the eigenvalue/weight~\cite{koyama,  vanderkam,  donnelly, rud05, xia, das-sengupta, brumley-templier,  blomer-pohl, hol-ricotta-royer, blomer-maga}, volume/level~\cite{abbes-ullmo, jorgenson-kramer,  lau, templier-sup, harcos-templier-1, harcos-templier-2, templier-large, kiral} and hybrid~\cite{blomer-holowinsky, blomer-michel,  templier-sup-2, blomer-harcos-milicevic} aspects for various types of automorphic forms. One reason why this problem is interesting is its connections with various other topics, such as the theory of quantum
chaos, the subconvexity of $L$-functions, the combinatorics of Hecke-algebras, and diophantine analysis.

Our interest in this paper is in the level aspect. We consider the sup-norm question for eigenfunctions on the arithmetic hyperbolic surface $\Gamma_0(N)\bs \H$ equipped with the measure $\frac{dx dy}{y^2}$. It is natural to restrict to the case of newforms. Thus, we are interested in bounding the sup-norms of $L^2$-normalized Hecke--Maass newforms $f$ of level $N$ (and trivial character) in the $N$-aspect. The following upper bounds for $\|f\|_\infty$ in the $N$-aspect were known prior to this work:
\begin{itemize}
\item The ``trivial bound" $\|f\|_\infty \ll_{\lambda, \epsilon} N^{ \epsilon}$.

\item $\|f\|_\infty \ll_{\lambda, \epsilon} N^{-\frac{25}{914} + \epsilon}$ for squarefree $N$, due to Blomer and Holowinsky~\cite{blomer-holowinsky}, published in 2010.

\item  $\|f\|_\infty \ll_{\lambda, \epsilon} N^{-\frac{1}{22} + \epsilon}$ for squarefree $N$, due to Templier~\cite{templier-sup}, published in 2010.

  \item  $\|f\|_\infty \ll_{\lambda, \epsilon} N^{-\frac{1}{20} + \epsilon}$ for squarefree $N$, due to Helfgott--Ricotta (unpublished).

 \item $\|f\|_\infty \ll_{\lambda, \epsilon} N^{-\frac{1}{12} + \epsilon}$ for squarefree $N$, due to Harcos and Templier~\cite{harcos-templier-1}, published in 2012.

  \item $\|f\|_\infty \ll_{\lambda, \epsilon} N^{-\frac{1}{6} + \epsilon}$ for squarefree $N$ due to Harcos and Templier~\cite{harcos-templier-2}, published in 2013.\footnote{Templier, in separate work~\cite{templier-sup-2}, has successfully combined this bound with the bound of Iwaniec--Sarnak in the eigenvalue-aspect, to obtain a state-of-the-art hybrid estimate.}

\end{itemize}

As the above makes clear, there has been fairly rapid progress in the squarefree case, yet no improvement has been obtained beyond the trivial bound when $N$ is not squarefree. Indeed, all the above papers rely crucially on using Atkin-Lehner operators to move any point of $\H$ to a point of  imaginary part $\ge \frac1N$  (which is essentially equivalent to using a suitable Atkin--Lehner operator to move any cusp to infinity). This only works if $N$ is squarefree.

In this paper, we introduce some new ideas and technical improvements which allows us to obtain a non-trivial result without any square-free assumptions.\footnote{A look at the wider sup-norm literature suggests that this is the first time that the squarefree  barrier  has  been non-trivially broken for any kind of automorphic form on a domain that contains cusps.}

\begin{theorem}\label{t:main}Let $f$ be an $L^2$-normalized Hecke-Maass cuspidal newform for the group $\Gamma_0(N)$ with Laplace eigenvalue $\lambda \le T$.
\begin{enumerate}
\item For any $\epsilon>0$ we have the bound $$\|f\|_\infty \ll_{T,\epsilon} N^{-1/12 + \epsilon}.$$

\item \label{case2} Suppose that there is no integer $M$ in the range $1<M<N^{1/6}$ such that $M^2$ divides $N$. Then we can improve the above bound to  $$\|f\|_\infty \ll_{T,\epsilon} N^{ \epsilon} \max(N^{-1/6}, N^{-1/4}N_0^{1/4})$$ where $N_0$ is the largest integer such that $N_0^2$ divides $N$. In particular, in this case we always have $$\|f\|_\infty \ll_{T,\epsilon} N^{-1/8+ \epsilon}.$$
\end{enumerate}

\end{theorem}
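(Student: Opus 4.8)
The plan is to set up the problem adelically and reduce the sup-norm bound to a counting problem for the amplified diagonal. First I would fix a Hecke--Maass newform $f$ of level $N$ and transfer the question to the adele group $\PGL_2(\A)$, working with the associated automorphic form $\phi$. The key is to produce, for every $z\in\H$, a local vector at each prime $p\mid N$ adapted to the local conductor $p^{\val_p(N)}$, together with an amplifier built from Hecke eigenvalues $\lambda_f(\ell)$ at small primes $\ell\nmid N$. Since Atkin--Lehner operators are no longer available to move an arbitrary point to imaginary part $\ge 1/N$ when $N$ is not squarefree, the first genuinely new ingredient is a substitute: one shows that any point of $\H$ can be moved, using $\Gamma_0(N)$ together with suitable non-Atkin--Lehner elements (scaling matrices of the various cusps of $\Gamma_0(N)$), to a point where the relevant local analysis at each $p$ behaves as well as the divisibility of $N$ by powers of $p$ allows. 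Concretely the parameter $N_0$ (largest integer with $N_0^2\mid N$) governs how far one can push: near a ``deep'' cusp one only gains a factor like $N/N_0$ rather than $N$.

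The main body of the argument is then a pretrace-formula computation. I would expand the amplified counting kernel
\[
\sum_{\ell,\ell'} x_\ell\, x_{\ell'} \sum_{\gamma\in \text{(double coset)}} k(z,\gamma z)
\]
and bound it by splitting into the contribution of the identity/central elements (which gives the ``diagonal'' main term, of size roughly $L^2/N$ where $L$ is the amplifier length, after the local vectors at $p\mid N$ are normalized) and the contribution of the remaining matrices, which are counted by reducing to the number of solutions of quadratic Diophantine inequalities of the shape $|4\det - \text{trace}^2|\ll \cdots$ with congruence conditions modulo $N$ (or modulo $N/N_0$ after the cusp reduction). Optimizing the amplifier length $L$ against these two contributions, and then taking the worst case over the position of $z$ (equivalently over the cusps, parametrized by divisors of $N$ and by $N_0$), yields the stated maximum $\max(N^{-1/6}, N^{-1/4}N_0^{1/4})$, and hence $N^{-1/8+\epsilon}$ once the hypothesis $N^2\nmid$ anything below $N^{1/6}$ forces $N_0\le N^{1/2}$; the weaker unconditional bound $N^{-1/12+\epsilon}$ in part~(1) comes from handling the remaining ``small square'' range $N_0\ge N^{1/6}$ by a cruder argument, e.g. restricting the amplifier or using a direct bound at the offending prime.

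The technical heart at the ramified primes is computing the matrix coefficient (or equivalently the local test function) attached to the newvector of a ramified representation $\pi_p$ of conductor $p^{c}$ and bounding its mass on the relevant compact-open double cosets; here one needs, for each type of supercuspidal/twist-of-Steinberg/principal-series representation, a uniform estimate for how the local integral decays in $p^c$, and in particular that the ``diagonal'' local term is $\asymp \vol(\text{level subgroup})^{-1}\asymp p^c$ (or $p^c(1+1/p)$), which is what produces the $1/N$ saving. This local analysis, and the precise bookkeeping of which congruence class modulo which divisor of $N$ a counted matrix must lie in after the cusp-scaling step, is the part I expect to be the main obstacle: it is the place where squarefreeness was previously used, and getting the Diophantine count to come out with the right power of $N$ uniformly in the cusp requires care. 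Everything else (the choice of archimedean test function, positivity, the amplification inequality, the final optimization) is standard in the sup-norm literature and follows the template of Iwaniec--Sarnak, Blomer--Holowinsky, and Harcos--Templier.
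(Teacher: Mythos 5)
Your outline follows the general template of the sup-norm literature, but it is missing the two ingredients that actually make the non-squarefree case work in this paper, and as written the argument would not close. First, you only describe the amplified pre-trace-formula/counting side. That bound necessarily degrades as the (moved) point goes up the cusp: the upper-triangular and parabolic contributions grow with $y$, so amplification alone can never control $|f(z)|$ for $z$ near a cusp. Every proof in this area pairs the counting bound with a complementary bound coming from the Fourier expansion, via Cauchy--Schwarz and a Rankin--Selberg mean-value estimate, valid for $y$ large. Here that step is genuinely hard and new: after moving $z$ to a width-$1$ cusp $\sigma(\infty)$ with $C(\sigma)=N/M$, one must bound $\sum_{|n|\le X}|\rho_\sigma(n)|^2$ where the $\rho_\sigma(n)$ are Fourier coefficients at a cusp other than $\infty$; these are not multiplicative and are not given by Hecke eigenvalues, and controlling them requires explicit local Whittaker newform computations at the ramified primes (the paper gets $\ll X+M\sqrt{X}$ for the arithmetic part). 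Your ``local matrix coefficient at $p\mid N$'' discussion concerns the test function in the pre-trace formula, which is not where the difficulty lies, and does not substitute for this Whittaker/Fourier input. Without the cusp bound you cannot handle the regime $y'\gg N^{-5/6}$, nor the case where the relevant square divisor $M$ is large (there the cusp bound alone gives $N^{-1/12}$ directly via the gap $y'\gg M^2/N$).

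Second, the geometric reduction is too vague to support the Hecke amplification. The paper's mechanism is: move $z$ by an Atkin--Lehner operator composed with a scaling matrix $\sigma$ of a \emph{width-one} cusp, so that $z'=\sigma^{-1}Wz$ has $\Im(z')\gg M^2/N$ together with a quantitative Diophantine gap; the price is that $f|\sigma$ lives only on $\Gamma_0(N;M)=\Gamma_0(N)\cap\Gamma_1(M)$ and is a Hecke eigenform only for $T(\ell)$ with $\ell\equiv 1\bmod M$ (because only those double cosets are normalized by $\sigma$). Your amplifier over all $\ell\nmid N$ is therefore not usable: you must restrict to primes $\equiv 1\bmod M$, which costs a factor of $M$ in the amplifier length and has to be recovered elsewhere. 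Relatedly, the term $N^{-1/4}N_0^{1/4}$ in part (2) does not come from the diagonal or from ``how far one can push toward a deep cusp''; it comes from the count of \emph{parabolic} matrices $\gamma$ with $\mathrm{tr}(\gamma)^2=4\det(\gamma)$, where the congruence $N\mid c^2t$ forces only $\{\sqrt{N}\}=N/N_0$ to divide $c$ and produces the $y'N_0$ term in the final optimization. Finally, your dichotomy for part (1) is misstated: the split is over the point-dependent square divisor $M$ (at threshold $N^{1/12}$), not over the global quantity $N_0$, and $N_0\le N^{1/2}$ holds trivially rather than as a consequence of the hypothesis in part (2).
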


\begin{remark}Assertion~\eqref{case2} of the Theorem can be regarded as dealing with the case when $N$ is not divisible by ``small squares". This includes, for instance, the squarefree case (in which case we recover the bound $\|f\|_\infty \ll_{T,\epsilon} N^{-1/6+ \epsilon}$ due to Harcos--Templier), the case $N=p^2N_2$ where $N_2$ is squarefree and $p$ is a prime such that $p \ge N_2^{1/4}$, and the  case $N = p^n$ where $p$ is a prime and $1\le n \le 6$.
\end{remark}

\begin{remark}
All the results of this paper (and in particular the main result above) remain valid in the case of holomorphic newforms of fixed weight and varying level $N$.
\end{remark}

\begin{remark}In this paper we have restricted for simplicity to the case of trivial central character. We have also made no effort to obtain a hybrid bound, i.e., we haven't attempted to quantify the dependence of our constants on the Laplace eigenvalue. However, we expect that the methods of this paper, with some modifications, will be able to deal with these cases. Further, we hope that this paper will shed some light on how to remove the squarefree restriction from sup-norm bounds for more general automorphic forms. We will come back to some of these questions in future work.
\end{remark}

\begin{remark} \textbf{(Added in proof)} Recently, we have succeeded in significantly improving the results of this paper, as well as obtained a hybrid bound. This is done in our forthcoming work \cite{saha-sup-level-hybrid}, which uses a fairly different (and in our view,  superior) adelic methodology compared to this paper.
\end{remark}

\medskip

Let us briefly explain the new ingredients in this paper compared to the paper by Harcos and Templier~\cite{harcos-templier-2} (whose general strategy we broadly follow). Our key new idea is to look at the behavior of cusp forms around \emph{cusps of width 1}. Recall that if $N$ is squarefree, then the surface $\Gamma_0(N)\bs \H$ has exactly one cusp of width 1, namely the cusp at infinity. However, if $N$ is not squarefree, then there is always more than one cusp of width 1. The cusps of width 1 have several nice properties. First, any cusp can be conjugated to a cusp of width 1 by use of a suitable Atkin-Lehner operator. Secondly, this leads to a ``gap principle", whereby any point of $\H$ can be moved by an Atkin-Lehner operator to another point which has high imaginary part and good diophantine properties when re-written in the coordinates corresponding to a suitable cusp of width 1. Thirdly, if $\sigma \in \SL_2(\Z)$ is a matrix that takes the cusp at infinity to a cusp of width 1, then for any Hecke-Maass cuspidal newform $f$ for $\Gamma_0(N)$, the function $f|\sigma$ is a Maass cusp form on the slightly smaller group $\Gamma_0(N;M):= \Gamma_0(N) \cap \Gamma_1(M)$ (where $M^2$ is a suitable divisor of $N$) and moreover $f|\sigma$ is an eigenfunction of the Hecke operators at all primes congruent to 1 mod $M$.

We exploit the above facts to reduce the sup norm question from $f$ to some suitable $f|\sigma$. However, several technical difficulties arise. First, the counting problem that lies at the heart of the amplification method becomes much more involved, especially for the \emph{parabolic} matrices. Secondly, the bound via applying the Cauchy--Schwarz inequality on the Fourier expansion requires us now to undertake a deep study of the Fourier coefficients at the cusp $\sigma$. Thirdly, because the surface $\Gamma_0(N;M)\bs \H$ has higher volume than $\Gamma_0(N)\bs \H$ and because we can now amplify \emph{only} over primes that are 1 mod $M$, we lose some sharpness in our bounds, and it is important to offset this in some way\footnote{This is achieved by a twofold process. First, our gap principle contains a factor of $M^2$, which makes the bounds obtained via the Fourier expansion extremely strong when $M$ is relatively large. Secondly, our counting arguments are refined to mostly account for the presence of $M$.} so that this loss is not too prominent. These technical difficulties are however, all successfully overcome, and in the end we get the theorem quoted above.
\bigskip

We end this introduction with a few speculative remarks regarding the true order of magnitude for $\|f\|_\infty$. The trivial \emph{lower bound} for $\|f\|_\infty$  in the $N$-aspect is $\|f\|_\infty \gg_{T,\epsilon} N^{-1/2 - \epsilon}$ and this bound is also valid for $L^2$-normalized Hecke--Maass newforms with non-trivial character.  However, if the conductor of the character is large relative to $N$, local effects (coming from the behavior of local Whittaker newforms for ramified principal series representations) lead to stronger lower bounds. For example, if $f$ is an $L^2$-normalized Hecke--Maass newform of level $N$ with $N$ a perfect square, and the conductor of the character attached to $f$ is also equal to $N$, then Templier~\cite{templier-large} showed that $\|f\|_\infty \gg_{T,\epsilon} N^{-1/4 - \epsilon}$. In forthcoming work by the author \cite{saha-sup-level-hybrid, sahasupwhittaker}, the results of this paper, as well as Templier's example, will be generalized to a wide variety of cases with non-trivial character. Moreover, we will precisely measure the local effects coming from the ramified Whittaker newforms, and thus will be able to make a conjecture about the true size of $\|f\|_\infty$. In the case of trivial central character as in this paper, or more generally if the exponent of each prime dividing the conductor of the character is at most half the exponent of the prime dividing the square-ful part of $N$, we will (optimistically) conjecture that $N^{-1/2 - \epsilon} \ll_{T,\epsilon} \|f\|_\infty \ll_{T,\epsilon} N^{-1/2 + \epsilon}.$

\subsection*{Acknowledgements} I would like to thank Martin Dickson, Gergely Harcos, Emmanuel Kowalski, Paul Nelson and Guillaume Ricotta for helpful comments.
\subsection*{Some basic notations and definitions}
\begin{itemize}

\item The symbols $\Z$, $\Z_{\ge0}$, $\Q$, $\R$, $\C$, $S^1$, $\Z_p$ and $\Q_p$ have the usual meanings. $\A$ denotes the ring of adeles of $\Q$.

\item For any two complex numbers $\alpha, z$, we let $K_\alpha(z)$ denote the modified Bessel function of the second kind. We write $e(z) := e^{2 \pi
  i z}$. For each positive integer $n$, we let $\phi(n)$ denote the Euler
phi function
$\phi(n) = \# (\mathbb{Z}/n)^\times = \# \{a \in \mathbb{Z} : 1 \leq a
\leq n, (a,n) = 1\}$.

\item Given two integers $a$ and $b$, we use $a|b$ to denote that $a$ divides $b$, and we use $a|b^\infty$ to denote that $a|b^n$ for some positive integer $n$. We use $(a,b)$ to denote the greatest common divisor of $a$ and $b$, which by our convention is always positive. We use $(a,b^\infty)$ to denote the limit $\lim_{n \rightarrow \infty}(a, b^n)$, which always exists. We write $a^n||b$ to mean that  $a^n | b$ and $a^{n+1}$ does not divide $b$. For any real number $\alpha$, we let $\lfloor \alpha \rfloor$ denote the greatest integer less than or equal to $\alpha$ and we let $\lceil \alpha \rceil$ denote the smallest integer greater than or equal to $\alpha$.

\item For any commutative ring $R$ and positive integer $n$,
$M_n(R)$ denotes the ring of $n$ by $n$ matrices with entries in $R$ and $GL_n(R)$ denotes the group of
invertible matrices in $M_n(R)$. We use $R^{\times}$ to denote $GL_1(R)$.

\item The groups $\SL_2$, $\PSL_2$ and $\Gamma_0(N)$ have their usual meanings. We let $\GL_2^+(\R)$ denote the subgroup of $\GL_2(\R)$ consisting of matrices with positive determinant.

\item  We let $\H = \{x + iy: x \in \R, \ y\in \R, \ y>0\}$ denote the upper half plane. For any $\gamma=\begin{pmatrix}a&b\\c&d\end{pmatrix}$ in $\GL_2^+(\R)$, and any $z \in \H$, we define $\gamma(z)$ or $\gamma z$ to equal $\frac{az+b}{cz+d}$. This action of $\GL_2^+(\R)$ on $\H$  extends naturally to the boundary of $\H$. For any $g \in \GL_2^+(\R)$ and any function $f$ on $\H$, we let $f|\gamma$ denote the function on $\H$ defined by $f|\gamma(z)=f(\gamma z)$.

 \item For any congruence subgroup $\Gamma$ of $\SL_2(\Z)$, and any bounded function $f:\H \rightarrow \C$ satisfying $f(\gamma z)=f(z)$ for all $\gamma \in \Gamma$, we define $\langle f, f \rangle_\Gamma = \int_{\Gamma \bs \H} |f(z)|^2 \frac{dx dy}{y^2},$ and $\|f\|_\infty = \sup_{z \in \H} f(z)$.   We say that such a $f$ is a Maass cusp form for/on $\Gamma$ if $f$ is an eigenfunction of the hyperbolic Laplacian $\Delta := y^{-2}
(\partial_x^2 + \partial_y^2)$ on $\mathbb{H}$ and decays
rapidly at the cusps of $\Gamma$.  The Laplace eigenvalue of such a  Maass cusp form $f$ is the real number $\lambda$ satisfying $(\Delta + \lambda) f =
0$. We can write $\lambda = 1/4 + r^2$ where
 $r \in \mathbb{R} \cup
i[0,1/2]$;
this follows from the nonnegativity of $\Delta$. We say that $f$ is $L^2$-normalized if $\langle f, f \rangle_\Gamma = 1$.

 \item We say that $f$ is a cuspidal Hecke--Maass newform for $\Gamma_0(N)$ (also referred to as a cuspidal Hecke--Maass newform of level $N$ and trivial character) if it is a Maass cusp form for $\Gamma_0(N)$ and is a newform in the sense of Atkin--Lehner (i.e., it is orthogonal to all oldforms, and is an eigenfunction of all  the Hecke and Atkin-Lehner operators). A cuspidal Hecke--Maass newform $f$ is always either even or odd, i.e., there exists $\epsilon_f \in \{\pm 1 \}$ such that $f(-\overline{z}) = \epsilon_f f(z).$

\item We use the notation
$A \ll_{x,y,z} B$
to signify that there exists
a positive constant $C$, depending at most upon $x,y,z$,
so that
$|A| \leq C |B|$.

\item The symbol $\epsilon$ will denote a small positive quantity, whose value may change from line to line, and the value of the constant implicit in $\ll_{\epsilon, \ldots}$ may also change from line to line.   An assertion such as ``Let $1 \le L \le N^{O(1)}.$ Then $f(\epsilon, L, N, \ldots) \ll_{\epsilon, \ldots} N^{O(\epsilon)} g(L,N, \ldots)$" means ``For every $C>0, \ 1 \le L \le N^C,$ there is a constant $D>0$ depending only on $C$ such that $f(\epsilon, L, N, \ldots) \ll_{ C, \epsilon, \ldots} N^{D\epsilon}  g(L,N, \ldots).$"
\end{itemize}

\section{Cusps of width 1 and Atkin-Lehner operators}
Let $N=\prod_p p^{n_p}$ be a positive integer. Let $\mathbb{P}^1(\mathbb{Q})$ denote the set of all boundary points  of the upper-half plane $\H$ that are stabilized by a non-trivial element of $\PSL_2(\Z)$; precisely, $\mathbb{P}^1(\mathbb{Q})$  is the union of $\infty$ and the rational points on the real line.  The set  $\mathcal{C}(\Gamma_0(N))= \Gamma_0(N) \bs \mathbb{P}^1(\mathbb{Q})$ is the set of cusps of $\Gamma_0(N)$. For any ring $R$, let $N(R)=\{\left(
  \begin{smallmatrix}
    1&n\\
    &1
  \end{smallmatrix}
\right): n
 \in R\}$ and $Z(R)=\{\left(
  \begin{smallmatrix}
    z&\\
    &z
  \end{smallmatrix}
\right): z\in R^\times\}$. Via the correspondence $z \leftrightarrow \gamma(\infty)$, the set $\mathcal{C}(\Gamma_0(N))$  can be identified with the double coset space $\Gamma_0(N) \bs \SL_2(\Z) / N(\Z)$. Given any $\tau \in \SL_2(\Z)$, we can therefore speak of (some property of) the cusp (corresponding to) $\tau$.

Let $\tau \in \SL_2(\Z)$. The cusp $\tau (\infty)$  contains a representative of the form $\frac{a}c$, where $a,c \in \Z, \  c|N$, $c>0$, $\gcd(a,c)=1$. The integer $c$ is uniquely determined. We will denote $C(\tau)=c$ and refer to $C(\tau)$ as the denominator of the cusp corresponding to $\tau$. It can be easily checked that if $\tau = \begin{pmatrix}a&b\\c&d\end{pmatrix}$ then $C(\tau) = \gcd(c, N)$. If  $\Gamma_0(N)\tau_1 N(\Z) = \Gamma_0(N)\tau_2 N(\Z)$, then $C(\tau_1)=C(\tau_2)$.

We let $W(\tau) $ denote the width of the cusp corresponding to $\tau$; precisely, $W(\tau) = [ N(\Z) : N(\Z) \cap
\tau^{-1} \Gamma_0(N) \tau]$. Since the group $N(\Z) \cap
\tau^{-1} \Gamma_0(N) \tau$ always contains $\{ \left(
  \begin{smallmatrix}
    1&Nn\\
    &1
  \end{smallmatrix}
\right): n \in \mathbb{Z}\}$ which has index $N$ in $N(\Z)$, it follows  that $W(\tau)$ is a divisor of $N$.

For the convenience of the reader, we note down a few standard facts, proofs of which can be found for example in~\cite[Sec. 3.4.1]{NPS}.
\begin{itemize}
\item For each $c|N$, the number of cusps with denominator $c$ equals $\phi((c, N/c))$. Thus, the total number of cusps equals $\sum_{c|N} \phi((c, N/c))$. Moreover, there exists only one cusp of denominator $N$, namely the cusp $\infty$ ($=1/N$).

\item For each $\tau \in \SL_2(\Z)$ we have $W(\tau) = N/ (C(\tau)^2, N).$ In particular, $W(\tau)=1$ if and only if $N|C(\tau)^2$.

 \item If $N$ is squarefree, then there is exactly one cusp of width 1, namely the cusp $\infty$. However, if $N$ is not squarefree, then there is always more than one cusp of width 1.

 \end{itemize}

\begin{remark}From the above facts, it is clear that an element $\sigma \in \SL_2(\Z)$ satisfies $W(\sigma)=1$ if and only if $C(\sigma) = N/M$ for some positive integer $M$ such that $M^2|N$. \end{remark}

For each prime $p$, let $$K_0(p^{n_p}) =  \GL_2(\Z_p) \cap
  \begin{pmatrix}
    \Z_p & \Z_p  \\
  p^{n_p}\Z_p  & \Z_p \end{pmatrix}.
$$ For any divisor $M$ of $N$, we define the congruence subgroup $\Gamma_0(N;M)$ as follows:
  $$\Gamma_0(N;M) = \{\gamma = \begin{pmatrix}a&b\\c&d\end{pmatrix} \in \SL_2(\Z): a\equiv d \equiv 1 \bmod{M}, \ c \equiv 0 \bmod{N}\}.$$ Note that $\Gamma_0(N;1) = \Gamma_0(N)$ and $\Gamma_0(N;N)= \Gamma_1(N)$. We have the following proposition.

\begin{proposition}\label{gammanmcompatibility}Suppose that $M$ is a positive integer such that $M^2$ divides $N$. Let $\sigma \in \SL_2(\Z)$ be such that $C(\sigma) = N/M$. Then $\sigma \Gamma_0(N;M) \sigma^{-1} \subseteq \Gamma_0(N;M).$

\end{proposition}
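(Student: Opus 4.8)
The plan is to reduce the claimed conjugation property to an explicit matrix computation at each prime $p$ dividing $N$, using the fact that a cusp with denominator $N/M$ has width $1$. First I would recall from the standard facts that since $C(\sigma)=N/M$ and $W(\sigma) = N/((N/M)^2,N)=1$, we must have $N \mid (N/M)^2$, i.e. $M^2 \mid N$ (consistent with the hypothesis), and that $C(\sigma)=\gcd(c,N)$ where $\sigma=\begin{pmatrix}a&b\\c&d\end{pmatrix}$; so I may as well choose a convenient representative $\sigma$ with $c = N/M$ exactly, and with $\gcd(a, N/M)=1$. Write $\gamma = \begin{pmatrix} \alpha & \beta \\ \gamma_0 & \delta \end{pmatrix} \in \Gamma_0(N;M)$, so $N \mid \gamma_0$ and $\alpha \equiv \delta \equiv 1 \bmod M$. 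The goal is to show $\sigma \gamma \sigma^{-1}$ again has lower-left entry divisible by $N$ and diagonal entries $\equiv 1 \bmod M$.

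The key step is the computation of $\sigma\gamma\sigma^{-1}$. Writing $\sigma^{-1}=\begin{pmatrix} d & -b \\ -c & a\end{pmatrix}$ with $c=N/M$, one expands
\[
\sigma\gamma\sigma^{-1} = \begin{pmatrix} a & b \\ c & d \end{pmatrix}\begin{pmatrix} \alpha & \beta \\ \gamma_0 & \delta \end{pmatrix}\begin{pmatrix} d & -b \\ -c & a \end{pmatrix}.
\]
The lower-left entry works out to $c(\alpha - \delta)d - c^2\beta + \gamma_0 a d$. Since $N \mid \gamma_0$ the last term is divisible by $N$; since $M \mid (\alpha-\delta)$ and $c = N/M$, the term $c(\alpha-\delta)d$ is divisible by $N$; and since $M^2 \mid N$ we have $N \mid (N/M)^2 = c^2$, so the middle term $c^2\beta$ is divisible by $N$. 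Hence the lower-left entry of the conjugate is $\equiv 0 \bmod N$. For the diagonal condition, the upper-left entry is $ad\alpha - ac\beta - bc\gamma_0 + bc\delta \cdot(\text{correction})$ — more carefully, it equals $\alpha a d - \beta a c + \text{(terms divisible by }\gamma_0\text{ or }c)$; modulo $M$ one uses $\alpha \equiv 1$, $ad - bc = 1$, $M \mid c$, and $N \mid \gamma_0$ (so certainly $M \mid \gamma_0$) to collapse this to $1 \bmod M$, and symmetrically for the lower-right entry. The determinant is automatically $1$, so $\sigma\gamma\sigma^{-1} \in \SL_2(\Z)$, and we conclude it lies in $\Gamma_0(N;M)$.

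The only genuine subtlety — and the step I'd flag as the main obstacle — is making sure the argument is independent of the choice of representative $\sigma$: a priori $\sigma$ is only specified up to $\Gamma_0(N)\sigma N(\Z)$ (equivalently up to the cusp it represents), and different representatives in the same double coset could a priori behave differently. This is handled by noting that replacing $\sigma$ by $g\sigma n$ with $g \in \Gamma_0(N;M) \subseteq \Gamma_0(N)$ — wait, $\Gamma_0(N)$ may be larger — so one should instead observe directly that if $\sigma' = \gamma_1 \sigma u$ with $\gamma_1 \in \Gamma_0(N)$ and $u \in N(\Z)$, then $\sigma'\Gamma_0(N;M)\sigma'^{-1} = \gamma_1\sigma\, (u\Gamma_0(N;M)u^{-1})\,\sigma^{-1}\gamma_1^{-1}$; since $u \in N(\Z)$ normalizes $\Gamma_0(N;M)$ trivially up to the unipotent (and in fact $u\Gamma_0(N;M)u^{-1}$ is again inside $\Gamma_0(N;M)$ because $M \mid N$ forces the relevant congruences to persist) and $\gamma_1 \in \Gamma_0(N)$ normalizes... actually $\Gamma_0(N)$ does not normalize $\Gamma_0(N;M)$, so the cleanest route is to fix the canonical representative $\sigma$ with $c$ exactly equal to $N/M$ and $0 < a < N/M$, $\gcd(a,c)=1$, run the computation above, and remark that the statement ``$\sigma\Gamma_0(N;M)\sigma^{-1} \subseteq \Gamma_0(N;M)$'' is what is actually used later only for such explicit $\sigma$. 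Modulo that bookkeeping, the proof is the three divisibility checks on the entries of the $2\times 2$ product, which I expect to be routine.
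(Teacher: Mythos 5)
Your core argument is exactly the paper's proof: write out the conjugate of a general element of $\Gamma_0(N;M)$ by $\sigma$ and check the three congruences on the entries, using $M\mid(\alpha-\delta)$, $N\mid\gamma_0$, and $N\mid c^2$ (the last from $M^2\mid N$). The divisibility checks are all correct (modulo a harmless typo, $\gamma_0 ad$ for $\gamma_0 d^2$ in the lower-left entry), and the paper does the identical computation, just written as $\sigma^{-1}\gamma\sigma$ rather than $\sigma\gamma\sigma^{-1}$ --- which is equivalent since $C(\sigma^{-1})=C(\sigma)$.

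The ``main obstacle'' you flag at the end is not actually there, and your attempted resolution of it (fixing a canonical representative with $c=N/M$ exactly) is both unjustified as you note and unnecessary. The proposition is a statement about a fixed matrix $\sigma\in\SL_2(\Z)$, not about a cusp class, so there is no representative to choose: the hypothesis $C(\sigma)=N/M$, i.e.\ $\gcd(c,N)=N/M$, directly gives $N/M\mid c$, and that single divisibility is all your computation uses. Writing $c=(N/M)c'$ for an arbitrary integer $c'$, every step goes through verbatim: $cd(\alpha-\delta)$ is divisible by $(N/M)\cdot M=N$, $c^2$ is divisible by $(N/M)^2$ which is divisible by $N$, and $M\mid N/M\mid c$ handles the diagonal entries. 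So you should simply delete the final paragraph and run the computation for general $c$ with $N/M\mid c$.
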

\begin{proof}Recall that $C(\sigma) = N/M$ iff the lower left entry of $\sigma$ is a multiple of $N/M$. Now the result follows from the equation \begin{equation}\label{eq:keygammanm}\begin{split}&\begin{pmatrix}a&b\\Nc/M &d \end{pmatrix}^{-1}\begin{pmatrix}1+Mp & q\\Nr &1+Ms \end{pmatrix}\begin{pmatrix}a&b\\Nc/M &d \end{pmatrix}\\ &= \begin{pmatrix}1 + Madp + N(bdr-bcs) -(N/M)acq   & a^2q +bM (as - ap -br(N/M)) \\ N (dcp + d^2r - cds - c^2q(N/M^2)) & 1  + Mads + N(-bdr-bcp) +(N/M)acq  \end{pmatrix}.\end{split}\end{equation}
\end{proof}

Let $\mathcal{P}_N$ denote the set of primes dividing $N$. For each subset $S \subseteq \mathcal{P}_N$, we define $N_S = \prod_{p\in S}p^{n_p}$ where we understand $N_\emptyset=1$. We let $\mathcal{W}(S)$ denote the set $$\mathcal{W}(S) = \{W \in M_2(\Z): W \equiv \begin{pmatrix}0&*\\0&0\end{pmatrix} \pmod{N_S}, \ W \equiv \begin{pmatrix}*&*\\0&*\end{pmatrix} \pmod{N}, \ \det(W)=N_S \}.$$

The elements of $\mathcal{W}(S)$ (considered as operators on $\H$) are called the Atkin-Lehner operators. It is well-known~\cite{MR0268123} that all elements $W$ in $\mathcal{W}(S)$ satisfy $W^2 \in Z(\Q)\Gamma_0(N)$. The main other property of an Atkin-Lehner operator $W$ we need is that $$W \in \begin{cases} K_0(p^{n_p})\begin{pmatrix}0&-1\\p^{n_p}&0
\end{pmatrix}, & \text{ if } p \in S,  \\ K_0(p^{n_p}), &  \text{ if } p \notin S, \end{cases}$$ which follows directly from the definitions.

\begin{proposition}\label{keyprop1}Let $\tau \in \SL_2(\Z)$. Then there exists a subset $S$ of $\mathcal{P}_N$, an Atkin-Lehner operator $W \in \mathcal{W}(S)$,  positive integers $M_1, M$ such that $M^2|N$, $M_1= (M, N_S)$, $M_1^2|N_S$,  and an element $n \in N(\Q)$, such that the matrix $\sigma$ defined by $\sigma = W \tau n\begin{pmatrix} 1/M_1 &0\\0&M_1/N_S\end{pmatrix}$ has the following properties:
\begin{enumerate}
\item $\sigma \in \SL_2(\Z)$,
\item $C(\sigma)=N/M$.
\end{enumerate}

\end{proposition}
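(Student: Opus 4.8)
The plan is to start from $\tau \in \SL_2(\Z)$, look at the denominator $c = C(\tau) = \gcd(c', N)$ where $\tau = \begin{pmatrix}a'&b'\\c'&d'\end{pmatrix}$, and factor the problem prime by prime. Recall from the facts quoted above that $W(\tau) = N/(c^2,N)$, and that our goal is to produce $\sigma$ with $C(\sigma)=N/M$ where $M^2\mid N$; note $W(\sigma)=1$ automatically by the Remark, so in effect I am trying to use Atkin--Lehner operators to move the cusp $\tau(\infty)$ to a cusp of width $1$. For each prime $p\mid N$ with $p^{n_p}\|N$, write $p^{j_p}\|c$ where $0\le j_p\le n_p$. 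The key local dichotomy is: if $2j_p \ge n_p$ (i.e.\ $p^{n_p}\mid c^2$, so $p$ contributes trivially to the width), leave that prime alone and put $p\notin S$; if $2j_p < n_p$, put $p\in S$ and apply the Atkin--Lehner operator at $p$, which (by the local description $W\in K_0(p^{n_p})\begin{pmatrix}0&-1\\p^{n_p}&0\end{pmatrix}$) swaps the roles of the upper-left and lower-left entries modulo $p^{n_p}$, thereby changing the $p$-valuation of the denominator from $j_p$ to $n_p - j_p$. Taking $S$ to be exactly the set of such bad primes, the operator $W\tau$ has denominator divisible, at each $p\in S$, by $p^{\min(j_p,\,n_p-j_p)}$ — which is strictly less than $p^{n_p/2}$.

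Next I would introduce $M$ and $M_1$. Set, at each prime $p$, the exponent $m_p = \min(j_p,\, n_p - j_p)$ when $p\in S$ and $m_p = \lceil n_p/2\rceil$ — or rather the exponent forced by $p^{n_p}\mid c^2$, i.e.\ $m_p = n_p - j_p$ is already $\ge n_p/2$ — when $p\notin S$; more carefully, I want $M = \prod_p p^{m_p}$ with $M^2\mid N$ and $C(W\tau\cdot(\text{correction}))=N/M$. The matrix $W\tau$ itself has integer entries but its lower-left entry is $N_S\cdot(\text{something})/c_S \cdot(\dots)$ where $c_S = \prod_{p\in S}p^{j_p}$; this is where the factor $n\begin{pmatrix}1/M_1&0\\0&M_1/N_S\end{pmatrix}$ with $M_1 = (M,N_S) = \prod_{p\in S}p^{m_p}$ enters. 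Right-multiplication by $\begin{pmatrix}1/M_1&0\\0&M_1/N_S\end{pmatrix}$ scales the first column by $1/M_1$ and the second by $M_1/N_S$; since $\det W = N_S$, the determinant of $\sigma$ becomes $N_S\cdot\frac{1}{M_1}\cdot\frac{M_1}{N_S} = 1$, giving property (1) modulo checking integrality. The role of $n\in N(\Q)$ is to clear denominators in the entries of $W\tau\begin{pmatrix}1/M_1&0\\0&M_1/N_S\end{pmatrix}$ so that $\sigma$ actually lands in $M_2(\Z)$: a careful choice of the rational upper-triangular unipotent $n$ adjusts the second column by a $\Z$-combination of the first, which one checks (using $\gcd$ conditions and $M_1^2\mid N_S$) can be arranged to make all four entries integral. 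Then $\sigma\in\SL_2(\Z)$, proving (1), and a direct computation of $\gcd(\text{lower-left entry of }\sigma,\ N)$ using $m_p = n_p - (\text{val}_p\text{ of lower-left of }W\tau)$ at each prime yields $C(\sigma) = N/M$, proving (2).

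I expect the main obstacle to be the bookkeeping that produces the correct $n\in N(\Q)$ and verifies integrality of all entries of $\sigma$ simultaneously at every prime — both the primes in $S$ (where the Atkin--Lehner swap has introduced a factor of $1/M_1$ that must be absorbed) and the primes outside $S$ (where we must not accidentally destroy integrality). The cleanest way to organize this is $p$-adically: for each $p$, track the $2\times 2$ matrix $\sigma$ over $\Z_p$, using that $W$ is in $K_0(p^{n_p})$ (or $K_0(p^{n_p})w_p$) and that $\tau\in\GL_2(\Z_p)$, so that the only $p$-power denominators to worry about come from the explicit diagonal matrix, and these are killed exactly when the valuations match up by construction of $m_p$. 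The conditions $M^2\mid N$, $M_1 = (M,N_S)$, and $M_1^2\mid N_S$ are precisely what make this local matching consistent across all primes, and once the $p$-adic check goes through for every $p$ (together with $\det\sigma=1$ over $\Q$ and a real-place sign check to land in $\SL_2$ rather than $\GL_2^+$), assertions (1) and (2) follow. I would also remark that $\tau n$ and $\tau$ define the same cusp, so this construction does legitimately move the given cusp to a width-$1$ cusp, which is the conceptual content being used in the rest of the paper.
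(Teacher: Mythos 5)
Your proposal follows essentially the same route as the paper's proof: take $S$ to be the set of primes where the local width is nontrivial, apply an Atkin--Lehner swap at those primes, rescale by the diagonal matrix $\left(\begin{smallmatrix}1/M_1&0\\0&M_1/N_S\end{smallmatrix}\right)$, and glue together local choices of the unipotent $n$ at each $p\in S$ by strong approximation; the exponents $m_p$ you assign to $M$ and $M_1$ agree with the paper's. Two caveats. First, the step you defer as ``bookkeeping'' is where all of the actual work in the paper's argument lies: the local $n$ is produced explicitly by first normalizing $\tau$ within its double coset $K_0(p^{n_p})\tau N(\Z_p)$ to the representative $\left(\begin{smallmatrix}1&0\\p^{c_p(\tau)}&v\end{smallmatrix}\right)$ with $v\in\Z_p^\times$, after which $n=\left(\begin{smallmatrix}1&-vp^{-c_p(\tau)}\\0&1\end{smallmatrix}\right)$ makes the product visibly integral with lower-left valuation exactly $n_p-c_p(\tau)$; your assertion that a suitable $n$ exists is true, but it is not self-evident without some such normalization, so a complete write-up would need to supply it. Second, a small slip: for $p\notin S$ you write that $m_p=n_p-j_p$ ``is already $\ge n_p/2$,'' but since $2j_p\ge n_p$ there, one has $m_p\le n_p/2$ --- which is the inequality actually required for $M^2\mid N$.
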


The proof will be essentially local in nature.
For any $\tau = \left(
  \begin{smallmatrix}
    a & b  \\
  c  & d\end{smallmatrix}
\right) \in \GL_2(\Z_p)$, we define $c_p(\tau) = \min(v_p(c), n_p)$ and
$w_p(\tau) = \max(n_p - 2c_p(\tau), 0)$. Note that the integers $c_p(\tau)$ and $w_p(\tau)$ both range between 0 and $n_p$.
\begin{lemma}\label{doublecosetrep} The integers $c_p(\tau)$ and $w_p(\tau)$ depend only on the double coset $K_0(p^{n_p})\tau N(\Z_p).$ Moreover, the double coset $K_0(p^{n_p})\tau N(\Z_p)$ contains the matrix $\begin{pmatrix} 1&0\\p^{c_p(\tau)}&v\end{pmatrix}$ for some $v \in \Z_p^\times$.
\end{lemma}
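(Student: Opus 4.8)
The plan is to work entirely inside $\GL_2(\Z_p)$ and to exploit the explicit structure of the group $K_0(p^{n_p})$ acting on the left and $N(\Z_p)$ acting on the right. Write $n=n_p$ for brevity. First I would establish that the denominator-type invariant $c_p(\tau)=\min(v_p(c),n)$ is a well-defined function of the double coset: right-multiplication by $\left(\begin{smallmatrix}1&t\\0&1\end{smallmatrix}\right)$ fixes the lower-left entry $c$ outright, so it does not change $v_p(c)$; left-multiplication by $k=\left(\begin{smallmatrix}\alpha&\beta\\p^n\gamma&\delta\end{smallmatrix}\right)\in K_0(p^n)$ replaces $c$ by $p^n\gamma a+\delta c$. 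Since $\delta\in\Z_p^\times$, this new lower-left entry has the same valuation as $c$ whenever $v_p(c)<n$ (the term $p^n\gamma a$ has valuation $\ge n>v_p(c)$), and has valuation $\ge n$ when $v_p(c)\ge n$; in both cases $\min(v_p(\cdot),n)$ is unchanged. Hence $c_p(\tau)$, and therefore also $w_p(\tau)=\max(n-2c_p(\tau),0)$, depends only on the double coset.

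For the second assertion I would use the Iwasawa/Bruhat-type decomposition of $\GL_2(\Z_p)$ relative to $K_0(p^n)$. Given $\tau=\left(\begin{smallmatrix}a&b\\c&d\end{smallmatrix}\right)\in\GL_2(\Z_p)$, set $c'=c_p(\tau)$. The goal is to find $k\in K_0(p^n)$ and $u\in N(\Z_p)$ with $k\tau u=\left(\begin{smallmatrix}1&0\\p^{c'}&v\end{smallmatrix}\right)$, $v\in\Z_p^\times$. I split into two cases according to whether $v_p(a)=0$ or $v_p(a)>0$. If $a\in\Z_p^\times$, then left-multiplying $\tau$ by $\left(\begin{smallmatrix}a^{-1}&0\\0&1\end{smallmatrix}\right)\in K_0(p^n)$ makes the top-left entry $1$; a further left-multiplication by $\left(\begin{smallmatrix}1&0\\-c&1\end{smallmatrix}\right)$ — which lies in $K_0(p^n)$ exactly when $v_p(c)\ge n$, i.e.\ $c'=n$ — would clear the lower-left entry. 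To handle the generic subcase $v_p(c)<n$ one instead right-multiplies by $\left(\begin{smallmatrix}1&-b\\0&1\end{smallmatrix}\right)\in N(\Z_p)$ to clear the top-right entry, landing at $\left(\begin{smallmatrix}1&0\\c&d-bc\end{smallmatrix}\right)$; the determinant being a unit forces $d-bc\in\Z_p^\times$, and $v_p(c)=c'$ because in this subcase $v_p(c)<n$. If instead $v_p(a)>0$, then since $\det\tau\in\Z_p^\times$ we must have $v_p(c)=0$, so $c'=0$; here one swaps the roles of the columns: right-multiply by a suitable element of $N(\Z_p)$ and left-multiply by an element of $K_0(p^n)$ (note $K_0(p^n)$ contains $\left(\begin{smallmatrix}0&1\\p^n&0\end{smallmatrix}\right)\left(\begin{smallmatrix}0&-1\\1&0\end{smallmatrix}\right)=\left(\begin{smallmatrix}1&0\\0&p^n\end{smallmatrix}\right)$-type corrections only up to scalars, so more care is needed) to arrange $a\mapsto$ unit while keeping $v_p(c)=0$, then apply the previous case. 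Finally, multiplying on the left by $\left(\begin{smallmatrix}\mathrm{unit}&0\\0&1\end{smallmatrix}\right)\in K_0(p^n)$ lets me normalize the top-left entry to exactly $1$ and absorb any unit there into $v$.

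The main obstacle is the bookkeeping in the case $v_p(a)>0$ (equivalently $c'=0$, the cusp $0$ rather than $\infty$): there the naive column swap uses $\left(\begin{smallmatrix}0&-1\\1&0\end{smallmatrix}\right)$, which is \emph{not} in $K_0(p^n)$, so one cannot literally transpose. The fix is to observe that since $v_p(c)=0$ one may first right-multiply by $\left(\begin{smallmatrix}1&0\\ * &1\end{smallmatrix}\right)$... which is also not in $N(\Z_p)$ --- so instead one uses a right $N(\Z_p)$ move $\tau\mapsto\tau\left(\begin{smallmatrix}1&t\\0&1\end{smallmatrix}\right)=\left(\begin{smallmatrix}a&at+b\\c&ct+d\end{smallmatrix}\right)$ to make the \emph{second} column's bottom entry $ct+d$ a unit (possible since $c$ is a unit, just choose $t$ with $ct+d\equiv 1$), and then a left $K_0(p^n)$ move of the shape $\left(\begin{smallmatrix}1&*\\0&1\end{smallmatrix}\right)$ --- again not available. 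The genuinely correct route is: because $c\in\Z_p^\times$, the bottom row $(c,d)$ together with $K_0(p^n)$ acting on the left already lets us use $\left(\begin{smallmatrix}1&0\\-dc^{-1}p^?&1\end{smallmatrix}\right)$-type elements only when the $(2,1)$-perturbation is divisible by $p^n$. So in fact I would argue directly: in the case $c'=0$ the target matrix is $\left(\begin{smallmatrix}1&0\\1&v\end{smallmatrix}\right)$, and I exhibit $k\in K_0(p^n)$ by solving the linear congruences coming from $k\tau\equiv\left(\begin{smallmatrix}1&0\\1&v\end{smallmatrix}\right)\left(\begin{smallmatrix}1&t\\0&1\end{smallmatrix}\right)^{-1}\pmod{p^n}$, which is solvable precisely because $\tau\in\GL_2(\Z_p)$ and the lower-left congruence only needs to hold mod $p^n$. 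Reducing everything to these congruence solvability statements, each of which is immediate from $\det\tau\in\Z_p^\times$, is what makes the lemma go through; the only real content beyond the first assertion is packaging these elementary reductions cleanly.
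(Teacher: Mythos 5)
Your argument for the first assertion is correct and is precisely the (unwritten) computation behind the paper's ``immediate'' remark. Your plan for the second assertion also follows the same basic route as the paper --- explicit left $K_0(p^{n_p})$ and right $N(\Z_p)$ moves --- but the paper splits on $v_p(c)$ (namely $v_p(c)=0$, $0<v_p(c)<n_p$, $v_p(c)\ge n_p$) rather than on $v_p(a)$, and this matters: the case you find hardest, $v_p(a)>0$, forces $c\in\Z_p^\times$ and is in fact the easiest case. Two concrete problems with your write-up. (i) In the subcase $v_p(a)=0$, $v_p(c)\ge n_p$ (so $c_p(\tau)=n_p$), clearing the lower-left entry lands you at $\left(\begin{smallmatrix}1&0\\0&v\end{smallmatrix}\right)$, which is not of the required shape; you need one further left multiplication by $\left(\begin{smallmatrix}1&0\\p^{n_p}&1\end{smallmatrix}\right)\in K_0(p^{n_p})$ to reach $\left(\begin{smallmatrix}1&0\\p^{n_p}&v\end{smallmatrix}\right)$. (ii) More seriously, in the case $v_p(a)>0$ your final ``genuinely correct route'' is an assertion, not an argument: you propose solving $k\tau\equiv(\text{target})\,u^{-1}\pmod{p^{n_p}}$ and declare it solvable ``because $\tau\in\GL_2(\Z_p)$''. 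This leaves two things unverified: that a solution $k$ actually lies in $K_0(p^{n_p})$ (this uses $c\in\Z_p^\times$, not merely invertibility of $\tau$), and that a congruence mod $p^{n_p}$ suffices for the exact statement of the lemma --- it does, but only after observing that any $g\in\GL_2(\Z_p)$ with $g\equiv\left(\begin{smallmatrix}1&0\\1&v\end{smallmatrix}\right)\pmod{p^{n_p}}$ satisfies $g\left(\begin{smallmatrix}1&0\\1&v\end{smallmatrix}\right)^{-1}\in 1+p^{n_p}M_2(\Z_p)\subseteq K_0(p^{n_p})$, which you never say. As written, after two acknowledged false starts, this case is a gap.

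The repair is short: whenever $c\in\Z_p^\times$ (no hypothesis on $a$ needed), one has the exact identity
$$\begin{pmatrix}1&(1-a)/c\\0&1/c\end{pmatrix}\begin{pmatrix}a&b\\c&d\end{pmatrix}\begin{pmatrix}1&(ad-bc-d)/c\\0&1\end{pmatrix}=\begin{pmatrix}1&0\\1&(ad-bc)/c\end{pmatrix},$$
where the first factor is upper triangular with unit diagonal, hence lies in $K_0(p^{n_p})$, and the third lies in $N(\Z_p)$; this is exactly the paper's first case and disposes of your troublesome $v_p(a)>0$ situation in one line. With that substitution, and the fix in (i), your plan coincides with the paper's proof.
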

\begin{proof} The first assertion is immediate by looking at the matrix products in $K_0(p^{n_p})\tau N(\Z_p)$ modulo $p^{n_p}.$  For the second assertion, we consider three cases for $\tau = \left(
  \begin{smallmatrix}
    a & b  \\
  c  & d\end{smallmatrix}\right).$ The first case is when $v_p(c)=0$. In this case, $c$ is a unit and the result follows from the equation   $$\begin{pmatrix}1&(1-a)/c\\0&1/c\end{pmatrix}\begin{pmatrix}
    a & b  \\
 c  & d\end{pmatrix}\begin{pmatrix}1&(ad-bc-d)/c\\0&1\end{pmatrix} = \begin{pmatrix}1&0\\1&(ad-bc)/c\end{pmatrix}.$$

  The second case is $0<k=v_p(c)<n_p$. In this case, $a,d$ and $c_1=c/p^k$ are all units. The result follows from the equation   $$\begin{pmatrix}1/a&0\\0&1/c_1\end{pmatrix}\begin{pmatrix}
    a & b  \\
  p^kc_1  & d\end{pmatrix}\begin{pmatrix}1&-b/a\\0&1\end{pmatrix} = \begin{pmatrix}1&0\\p^k&d/c_1 +p^kb/a\end{pmatrix}.$$

  The third case is $0<n_p \le k=v_p(c)$. In this case, $a$ is a unit. The result follows from the equation   $$\begin{pmatrix}1/a&0\\(p^{n_p}-c)/a&1\end{pmatrix}\begin{pmatrix}
    a & b  \\
  c  & d\end{pmatrix}\begin{pmatrix}1&(1-c/p^{n_p})/a\\0&1\end{pmatrix} = \begin{pmatrix}1&0\\p^{n_p}&(ad-bc)/a\end{pmatrix}.$$

\end{proof}

\begin{lemma}Let $\tau \in \SL_2(\Z)$. Then $W(\tau) = \prod_{p|N} p^{w_p(\tau)}$ and $C(\tau)= \prod_{p|N}p^{c_p(\tau)}$.
\end{lemma}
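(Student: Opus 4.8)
The plan is to match the two global formulas already recorded in the excerpt — $C(\tau) = \gcd(c, N)$ and $W(\tau) = N/(C(\tau)^2, N)$ — against the local definitions $c_p(\tau) = \min(v_p(c), n_p)$ and $w_p(\tau) = \max(n_p - 2c_p(\tau), 0)$, prime by prime. Write $\tau = \begin{pmatrix} a & b \\ c & d \end{pmatrix} \in \SL_2(\Z)$.

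First I would treat $C(\tau)$. Since we have already observed $C(\tau) = \gcd(c, N)$, taking $p$-adic valuations gives $v_p(C(\tau)) = \min(v_p(c), v_p(N)) = \min(v_p(c), n_p)$, which vanishes when $p \nmid N$ (i.e. $n_p = 0$) and equals $c_p(\tau)$ when $p \mid N$. Hence $C(\tau) = \prod_{p} p^{\min(v_p(c), n_p)} = \prod_{p \mid N} p^{c_p(\tau)}$. Next, using the standard fact $W(\tau) = N/(C(\tau)^2, N)$ quoted earlier, for each prime $p$ we get $v_p(W(\tau)) = n_p - \min(2 c_p(\tau), n_p) = \max(n_p - 2c_p(\tau), 0) = w_p(\tau)$, which again vanishes for $p \nmid N$. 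Therefore $W(\tau) = \prod_{p \mid N} p^{w_p(\tau)}$, proving both assertions.

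There is essentially no obstacle; the only point requiring a word of care is the consistency of the local invariant $c_p(\tau)$ — defined in the excerpt for an arbitrary element of $\GL_2(\Z_p)$ via the double coset $K_0(p^{n_p}) \tau N(\Z_p)$ — with the valuation $v_p(c)$ of the global lower-left entry of $\tau$. When $\tau \in \SL_2(\Z)$, its image in $\GL_2(\Z_p)$ has lower-left entry $c$, so $c_p(\tau) = \min(v_p(c), n_p)$ is literally the invariant of Lemma~\ref{doublecosetrep} (equivalently, the exponent in the representative $\begin{pmatrix} 1 & 0 \\ p^{c_p(\tau)} & v \end{pmatrix}$ furnished by that lemma). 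With this identification, the two valuation computations above finish the proof.
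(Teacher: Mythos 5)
Your proof is correct and follows essentially the same route as the paper: the paper also derives $C(\tau)=\prod_{p|N}p^{c_p(\tau)}$ directly from the definitions (via $C(\tau)=\gcd(c,N)$) and then obtains the width formula from $W(\tau)=N/(C(\tau)^2,N)$. You have merely written out the prime-by-prime valuation check that the paper declares immediate, and your care about identifying the local invariant $c_p(\tau)$ with $\min(v_p(c),n_p)$ is the right point to flag.
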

\begin{proof} The equation $C(\tau)= \prod_{p|N}p^{c_p(\tau)}$ follows immediately from the relevant definitions. The relation $W(\tau) = \prod_{p|N} p^{w_p(\tau)}$ follows from the formulas $w_p(\tau) = \min(n_p - 2c_p(\tau), 0)$ and $W(\tau) = N/ (C(\tau)^2, N).$
\end{proof}

\begin{lemma}\label{keylemma1}Let $p|N$ and suppose that $\tau \in \GL_2(\Z_p)$ satisfies $w_p(\tau)>0.$ Then there exists $n \in N(\Q_p)$ such that for all $y_1, y_2 \in \Z_p^\times$, $\gamma\in K_0(p^{n_p})$, the element $\sigma \in \GL_2(\Q_p)$ defined via $$\sigma= \gamma\begin{pmatrix}0&-1\\p^{n_p}&0\end{pmatrix} \tau n \begin{pmatrix} y_1p^{-c_p(\tau)} &0\\0&y_2p^{c_p(\tau)-n_p}\end{pmatrix}$$ has the following properties:
\begin{enumerate}
\item $\sigma \in \GL_2(\Z_p)$,
\item $c_p(\sigma)=n-c_p(\tau)$,
\end{enumerate}
\end{lemma}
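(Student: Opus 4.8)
The statement is local at $p$, so the plan is to work entirely in $\GL_2(\Q_p)$ and mimic the structure of Lemma~\ref{doublecosetrep}. The hypothesis $w_p(\tau) > 0$ means $n_p - 2c_p(\tau) > 0$, i.e.\ $c_p(\tau) < n_p/2$, so in particular $2c_p(\tau) - n_p < 0$; this will be the arithmetic fact that makes various entries integral below. By Lemma~\ref{doublecosetrep} the double coset $K_0(p^{n_p})\tau N(\Z_p)$ contains a representative $\tau_0 = \begin{pmatrix} 1 & 0 \\ p^{c} & v\end{pmatrix}$ with $c = c_p(\tau)$ and $v \in \Z_p^\times$. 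The first reduction is to observe that since $c_p$ and the relevant properties of $\sigma$ only depend on the double coset (left multiply absorbed into $\gamma$, right multiply by $N(\Z_p)$ absorbed into a change of the element $n$), it suffices to prove the claim with $\tau$ replaced by this normal form $\tau_0$; I would record this carefully since the conclusion must hold \emph{for all} $\gamma \in K_0(p^{n_p})$ and \emph{all} $y_1, y_2 \in \Z_p^\times$, so I need the reduction to be uniform in those parameters, which it is because those parameters sit on the outside.

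With $\tau = \tau_0$, the next step is to exhibit $n = \begin{pmatrix} 1 & t \\ 0 & 1\end{pmatrix} \in N(\Q_p)$ explicitly and compute the product
\[
\begin{pmatrix} 0 & -1 \\ p^{n_p} & 0 \end{pmatrix}\begin{pmatrix} 1 & 0 \\ p^{c} & v\end{pmatrix}\begin{pmatrix} 1 & t \\ 0 & 1\end{pmatrix}\begin{pmatrix} y_1 p^{-c} & 0 \\ 0 & y_2 p^{c-n_p}\end{pmatrix}.
\]
Multiplying the first two factors gives $\begin{pmatrix} -p^{c} & -v \\ p^{n_p} & 0\end{pmatrix}$; multiplying by $n$ gives $\begin{pmatrix} -p^{c} & -p^{c}t - v \\ p^{n_p} & p^{n_p}t\end{pmatrix}$; and then conjugating the columns by the diagonal matrix produces
\[
\sigma = \begin{pmatrix} -y_1 & (-p^{c}t - v)\,y_2 p^{c-n_p} \\ y_1 p^{n_p - c} & y_2 p^{c}t \end{pmatrix}.
\]
The idea is now to choose $t \in \Q_p$ so that the $(1,2)$-entry becomes a unit while keeping the $(2,2)$-entry integral. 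Taking $t = -v p^{-c}$ makes $-p^{c}t - v = 0$, which kills the $(1,2)$-entry — too much; instead I want $-p^{c}t - v$ to be exactly $p^{n_p - 2c}$ times a unit, i.e.\ choose $t$ with $p^{c}t = -v - u p^{n_p - 2c}$ for a unit $u$ (for instance $u = 1$ if $p \nmid 2$ is not needed — any unit works, and since $n_p - 2c = w_p(\tau) \geq 1$, $p^{n_p-2c}$ is a non-unit so $-v - p^{n_p-2c}$ is still a unit). Then $t = -v p^{-c} - p^{n_p - 3c}$ lies in $\Q_p$ (this is the element $n$ claimed), the $(1,2)$-entry of $\sigma$ equals $u\,y_2\, p^{n_p - 2c} \cdot p^{c - n_p} = u y_2 p^{-c}$ — wait, that is not integral; so instead the cleaner choice is to make the $(1,2)$-entry a unit directly: I need $(-p^ct - v) y_2 p^{c - n_p} \in \Z_p^\times$, i.e.\ $-p^c t - v = (\text{unit}) \cdot p^{n_p - c}$, so $t = -v p^{-c} - (\text{unit}) p^{n_p - 2c}$; since $n_p - 2c \geq 1 > 0$ this $t$ has $p$-adic valuation $-c$, the $(2,2)$-entry $y_2 p^c t$ then has valuation $0$ — actually one should just set it up so the $(2,2)$-entry is a unit and the $(1,2)$-entry is a unit, check $\det(\sigma) \in \Z_p^\times$ (it equals $\pm y_1 y_2$ up to sign since $\sigma$ is $\begin{pmatrix}0&-1\\p^{n_p}&0\end{pmatrix}\tau n \cdot(\text{diag})$ with determinant $p^{n_p}\cdot 1 \cdot 1 \cdot y_1 y_2 p^{-n_p}$), and conclude $\sigma \in \GL_2(\Z_p)$. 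The precise choice of $t$ is a two-line computation once the target shape of $\sigma$ is fixed; I would present the final $n$ and the resulting $\sigma$ and verify entry-by-entry that all four entries lie in $\Z_p$ and $\det \in \Z_p^\times$.

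Finally, for property (2) I read off $c_p(\sigma) = \min(v_p(\sigma_{21}), n_p)$ where $\sigma_{21} = y_1 p^{n_p - c}$, so $v_p(\sigma_{21}) = n_p - c = n_p - c_p(\tau)$, and since $c_p(\tau) \geq 0$ we have $n_p - c_p(\tau) \leq n_p$, giving $c_p(\sigma) = n_p - c_p(\tau)$ as required (note the statement's ``$n - c_p(\tau)$'' should read ``$n_p - c_p(\tau)$''). The main obstacle is purely bookkeeping: pinning down a single element $n \in N(\Q_p)$ — independent of $\gamma, y_1, y_2$ — that simultaneously clears the denominators coming from the $p^{-c}$ in the right-hand diagonal factor and forces the $(1,2)$-entry into $\Z_p^\times$; once the normal form from Lemma~\ref{doublecosetrep} is in hand, the computation is mechanical and there is no conceptual difficulty, only the need to be uniform in the outer parameters and to use $w_p(\tau) \geq 1$ at exactly the step where one needs $-v - p^{w_p(\tau)}$ (times a unit) to remain a unit.
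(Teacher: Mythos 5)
Your proposal is correct and follows essentially the same route as the paper: reduce to the normal form $\tau_0=\left(\begin{smallmatrix}1&0\\p^{c}&v\end{smallmatrix}\right)$ via Lemma~\ref{doublecosetrep} (the left factor is absorbed into $\gamma$ using that $\left(\begin{smallmatrix}0&-1\\p^{n_p}&0\end{smallmatrix}\right)$ normalizes $K_0(p^{n_p})$), take $n=\left(\begin{smallmatrix}1&t\\0&1\end{smallmatrix}\right)$, and compute. The only place you overcomplicate matters is the choice of $t$: the value $t=-vp^{-c}$ that you discard as killing the $(1,2)$-entry ``too much'' is precisely the paper's choice and is perfectly fine, since that entry only needs to lie in $\Z_p$ rather than be a unit, and with it $\sigma=\left(\begin{smallmatrix}-y_1&0\\y_1p^{n_p-c}&-vy_2\end{smallmatrix}\right)$ is visibly in $\GL_2(\Z_p)$ with $c_p(\sigma)=n_p-c_p(\tau)$.
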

\begin{proof}Note that if $\sigma$ satisfies the required properties, then so does all elements in $K_0(p^{n_p})\sigma$. Hence we may assume that $\gamma=1$. Moreover, since $\begin{pmatrix}0&-1\\p^{n_p}&0\end{pmatrix}$ normalizes $K_0(p^{n_p})$, it follows that if the Proposition is true for some $\tau$, it is true for all $\tau \in K_0(p^{n_p})\tau N(\Z_p).$ Hence, using Lemma~\ref{doublecosetrep} we can assume without loss of generality that $\tau = \begin{pmatrix} 1&0\\p^{c_p(\tau)}&v\end{pmatrix}$ for some $v \in \Z_p^\times$. Define $n = \begin{pmatrix}
    1&-v p^{-c_p(\tau)}\\
    &1
  \end{pmatrix}
$. The condition $w_p(\tau)>0$ is equivalent to $2{c_p(\tau)}<n_p$. Then $$\sigma= \begin{pmatrix}0&-1\\p^{n_p}&0\end{pmatrix} \tau n\begin{pmatrix} y_1p^{-c_p(\tau)} &0\\0&y_2p^{c_p(\tau)-n_p}\end{pmatrix} = \begin{pmatrix}-y_1&0\\p^{n_p - c_p(\tau)}y_1& -vy_2 \end{pmatrix}.$$ So by inspection, we see that $\sigma \in \GL_2(\Z_p)$ and $c_p(\sigma) = n_p - c_p(\tau)$.

\end{proof}
\begin{lemma}\label{keylemma2}Let $p|N$ and suppose that $\tau \in \GL_2(\Z_p)$ satisfies $w_p(\tau)=0.$ Then for all $y_1, y_2 \in \Z_p^\times$, $\gamma\in K_0(p^{n_p})$, $n \in N(\Z_p)$, the element $\sigma \in \GL_2(\Z_p)$ defined via $$\sigma= \gamma \tau n \begin{pmatrix} y_1 &0\\0&y_2\end{pmatrix}$$ satisfies
$c_p(\sigma)=c_p(\tau)$,
\end{lemma}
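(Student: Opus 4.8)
The plan is to mimic the structure of the proof of Lemma~\ref{keylemma1}, but the situation is considerably simpler because when $w_p(\tau)=0$ no Atkin--Lehner twist is required. First I would reduce to a convenient normal form. As in Lemma~\ref{keylemma1}, observe that multiplying $\sigma$ on the left by an element of $K_0(p^{n_p})$ does not change whether $\sigma\in\GL_2(\Z_p)$ nor the value of $c_p(\sigma)$ (by the first assertion of Lemma~\ref{doublecosetrep}, $c_p$ is a function of the double coset $K_0(p^{n_p})\sigma N(\Z_p)$), so we may absorb $\gamma$ and assume $\gamma=1$. Similarly, because right multiplication by $N(\Z_p)$ and by $\begin{pmatrix}y_1&0\\0&y_2\end{pmatrix}$ with $y_i\in\Z_p^\times$ also preserves $\GL_2(\Z_p)$ and preserves $c_p$, it suffices to treat the single representative $\tau=\begin{pmatrix}1&0\\p^{c_p(\tau)}&v\end{pmatrix}$ with $v\in\Z_p^\times$ supplied by Lemma~\ref{doublecosetrep}; the claim for a general $\tau$ in the double coset follows since both $\GL_2(\Z_p)$-membership and $c_p$ only depend on that double coset.

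Next, with this normal form in hand, the hypothesis $w_p(\tau)=0$ means $n_p-2c_p(\tau)\le 0$, i.e.\ $c_p(\tau)\ge n_p/2$; in particular $v_p(p^{c_p(\tau)})=c_p(\tau)\ge \lceil n_p/2\rceil$. Now $\sigma=\tau n\begin{pmatrix}y_1&0\\0&y_2\end{pmatrix}$ with $n\in N(\Z_p)$, $y_1,y_2\in\Z_p^\times$: since $\tau\in\GL_2(\Z_p)$ (its entries are in $\Z_p$ and its determinant $v$ is a unit) and the other two factors lie in $\GL_2(\Z_p)$, we immediately get $\sigma\in\GL_2(\Z_p)$, which is assertion (1). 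For assertion (2), recall $c_p(\sigma)=\min(v_p(\text{lower-left entry of }\sigma),n_p)$. Writing $n=\begin{pmatrix}1&t\\0&1\end{pmatrix}$ with $t\in\Z_p$, a direct multiplication gives lower-left entry of $\sigma$ equal to $p^{c_p(\tau)}y_1$, whose valuation is exactly $c_p(\tau)$ because $y_1$ is a unit; hence $c_p(\sigma)=\min(c_p(\tau),n_p)=c_p(\tau)$, since $c_p(\tau)\le n_p$ always. (Note the right multiplication by the diagonal matrix only rescales the columns by units, so it does not affect valuations of entries, and $n$ only adds a multiple of the first column to the second, so it does not touch the lower-left entry at all.)

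There is essentially no obstacle here: the only mild point to be careful about is the order of the two reductions (first strip $\gamma$, then invoke Lemma~\ref{doublecosetrep} to normalize $\tau$, using that the conclusion is a statement about the double coset), and checking that the conclusion really is invariant under the allowed right multiplications — but that is exactly the content of Lemma~\ref{doublecosetrep}'s first assertion combined with the trivial observation that $N(\Z_p)$ and diagonal units lie in $N(\Z_p)$ and $\GL_2(\Z_p)$ respectively. Everything else is a one-line matrix computation. The contrast with Lemma~\ref{keylemma1} is precisely that there $w_p(\tau)>0$ forces one to introduce the Atkin--Lehner matrix $\begin{pmatrix}0&-1\\p^{n_p}&0\end{pmatrix}$ and a carefully chosen parabolic $n\in N(\Q_p)$ (with denominator) to clear the lower-left entry down to $p^{n_p-c_p(\tau)}$, whereas here the cusp already has the right denominator and nothing needs to be moved.
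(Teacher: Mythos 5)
Your argument is correct; the paper's own proof of this lemma is the single line ``This is immediate from the definitions,'' and your computation is just a fleshed-out version of that observation, so the approach is essentially the same. The detour through the normal form of Lemma~\ref{doublecosetrep} is harmless but unnecessary: for an arbitrary $\tau$ one checks directly that left multiplication by $K_0(p^{n_p})$, right multiplication by $N(\Z_p)$, and right multiplication by a diagonal unit matrix each preserve $\min\bigl(v_p(\text{lower-left entry}),n_p\bigr)$ --- and, as your own calculation shows, the hypothesis $w_p(\tau)=0$ is never actually used for this conclusion.
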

\begin{proof}This is immediate from the definitions.
\end{proof}

We now prove Proposition~\ref{keyprop1}.
\begin{proof}[Proof of Prop.~\ref{keyprop1}] Let $S$ be the set of primes $p$ for which $w_p(\tau)>0$ (i.e., $c_p(\tau) < n_p/2$). Define $M_1 = \prod_{p\in S}  p^{c_p(\tau)}$; note that $M_1^2 | N_S$. For each $p|N$, we put $m_p = c_p(\tau)$ if $p \in S$ and $m_p = n_p-c_p(\tau)$ if $p \notin S$. Define $$M = M_1 \prod_{p|N, p \notin S} p^{n_p - c_p(\tau)} =
\prod_{p|N} p^{m_p}.$$ Note that $M^2 |N$. Pick $W$ to be any element of $\mathcal{W}(S)$. Note that $W$ considered as an element of $\GL_2(\Q_p)$ lies in  $K_0(p^{n_p})\begin{pmatrix}0&-1\\p^{n_p}&0
\end{pmatrix}$ for each $p$ in $S$ and lies in $K_0(p^{n_p})$ for each prime outside $S$. For each $p \in S$, Lemma~\ref{keylemma1} provides an element $x_p \in N(\Q_p)$ such that $$\sigma_p= W \tau x_p \begin{pmatrix}1/M_1 &0\\0&M_1/N_S\end{pmatrix}$$ has the following properties:
\begin{enumerate}
\item $\sigma_p \in \GL_2(\Z_p)$,
\item $c_p(\sigma_p)=n_p-c_p(\tau)$,
\end{enumerate} By strong approximation, we can pick $n \in N(\Q)$ such that \begin{enumerate}
 \item $n \equiv x_p \pmod{p^{n_p}}$ for all $p \in S$.
\item $n \in N(\Z_p)$ for all $p \notin S$.

\end{enumerate}

 We claim that this choice of $n$ has the required properties. Indeed, let $$\sigma= W \tau n \begin{pmatrix}1/M_1 &0\\0&M_1/N_S\end{pmatrix}.$$ Then our choice ensures that $\det(\sigma)=1$ and moreover Lemmas~\ref{keylemma1} and~\ref{keylemma2} ensure that for all primes $p$, we have
$\sigma \in \GL_2(\Z_p)$, $c_p(\sigma)=n_p-m_p$. It follows that $\sigma \in \SL_2(\Z)$ and $C(\sigma)=N/M$.
\end{proof}

\section{A gap principle}
 Our goal in this section is to prove the following Proposition.

\begin{proposition}\label{prop:gap}Let $z \in \H$. Then there exists a subset $S$ of $\mathcal{P}_N$, an Atkin-Lehner operator $W \in \mathcal{W}(S)$, an integer $M$ such that $M^2|N$, and an element $\sigma \in SL_2(\Z)$ such that the following are true.

\begin{enumerate}
\item $$C(\sigma)=N/M.$$

\item $$\Im(\sigma^{-1} W z) \ge \frac{\sqrt{3}M^2}{2N}.$$

\item For any $(0,0) \neq (c,d) \in \Z^2$, we have $$\left|c(\sigma^{-1} W z) + d\right|^2 \ge \frac{3M^2 (c, N/M^2) }{4N}.$$
\end{enumerate}
\end{proposition}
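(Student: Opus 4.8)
The plan is to reduce the statement to a problem about the standard $\SL_2(\Z)$-action and the cusp $\infty$, then transport it back via Proposition~\ref{keyprop1}. First, I would start with $z \in \H$ and pick $\tau \in \SL_2(\Z)$ so that $\tau(\infty)$ is the cusp whose neighbourhood contains $z$ at maximal height; more precisely, using the standard theory of Farey dissection / the fact that $\PSL_2(\Z)$ acts on $\H$ with the usual fundamental domain, I can find $\tau \in \SL_2(\Z)$ such that $\tau^{-1}z$ lies in (a translate of) the standard fundamental domain for $\SL_2(\Z)$, hence $\Im(\tau^{-1}z) \ge \sqrt{3}/2$ and $|c\tau^{-1}z + d|^2 \ge \Im(\tau^{-1}z) \ge \sqrt{3}/2$ for every primitive $(c,d)$, with the sharper bound $|c\tau^{-1}z + d|^2 \ge \Im(\tau^{-1}z)(c^2+\cdots)$ coming from the explicit formula $\Im(\gamma w) = \Im(w)/|cw+d|^2$. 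Then I would feed this $\tau$ into Proposition~\ref{keyprop1} to obtain $S$, $W \in \mathcal{W}(S)$, integers $M_1, M$ with $M^2 \mid N$, $M_1 = (M, N_S)$, and $n \in N(\Q)$ such that $\sigma := W\tau n \begin{pmatrix} 1/M_1 & 0 \\ 0 & M_1/N_S \end{pmatrix} \in \SL_2(\Z)$ with $C(\sigma) = N/M$. This gives item (1) for free.

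For items (2) and (3), the point is to compute $\sigma^{-1}Wz$ in terms of $\tau^{-1}z$. From the definition of $\sigma$ we get $\sigma^{-1}W = \begin{pmatrix} M_1 & 0 \\ 0 & N_S/M_1 \end{pmatrix} n^{-1}\tau^{-1}$, so $\sigma^{-1}Wz = \begin{pmatrix} M_1 & 0 \\ 0 & N_S/M_1 \end{pmatrix}(n^{-1}\tau^{-1}z)$. Writing $w = n^{-1}\tau^{-1}z$, which has the same imaginary part as $\tau^{-1}z$ (since $n \in N(\Q)$ acts by a real translation), the diagonal matrix scales: $\Im(\sigma^{-1}Wz) = \tfrac{M_1}{N_S/M_1}\Im(w) = \tfrac{M_1^2}{N_S}\Im(\tau^{-1}z) \ge \tfrac{M_1^2}{N_S}\cdot\tfrac{\sqrt 3}{2}$. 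Now I must relate $M_1^2/N_S$ to $M^2/N$: by construction $M = M_1 \prod_{p \mid N, p \notin S} p^{n_p - c_p(\tau)}$ and $N/M = \prod_{p\mid N}p^{c_p(\tau) \text{ or } n_p - c_p(\tau)}$ with the structure that $M^2/N = M_1^2/N_S \cdot \prod_{p \notin S}p^{n_p - 2c_p(\tau)}$; since $p \notin S$ means $w_p(\tau) = 0$, i.e. $c_p(\tau) \ge n_p/2$, the extra factor $\prod_{p\notin S}p^{n_p - 2c_p(\tau)} \le 1$, hence $M^2/N \le M_1^2/N_S$. This gives (2): $\Im(\sigma^{-1}Wz) \ge \tfrac{\sqrt 3}{2}\cdot\tfrac{M_1^2}{N_S} \ge \tfrac{\sqrt 3}{2}\cdot\tfrac{M^2}{N}$.

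For item (3), I compute directly with the matrix $\sigma^{-1}W$. Given $(c,d) \neq (0,0)$, the quantity $c(\sigma^{-1}Wz) + d$ equals (a scalar multiple of) $c'(\tau^{-1}z) + d'$ where $(c', d') = (c, d)\begin{pmatrix} M_1 & 0 \\ 0 & N_S/M_1 \end{pmatrix}n^{-1}$ up to clearing denominators; tracking the denominator $N_S$ coming from the diagonal matrix and the arithmetic of $n$, one finds $(c', d')$ is (proportional to) an integer vector whose gcd divides $M_1$ and whose first component carries a factor related to $(c, N/M^2)$. The cleanest route is: write $c(\sigma^{-1}Wz)+d = \lambda(c'w' + d')$ where $w' = \tau^{-1}z$ lies in the fundamental domain and $(c', d')$ is primitive, and then $|c'w' + d'|^2 \ge \Im(w') \ge \sqrt 3/2$; the scalar $|\lambda|^2$ must then be shown to be $\ge \tfrac{M_1^2(c, N/M^2)}{2N_S} \ge \tfrac{M^2(c,N/M^2)}{2N}$ (adjusting constants to get $3/4$), which is a local computation prime-by-prime using the explicit shape of $n$ at each $p \in S$ from Lemma~\ref{keylemma1} and the triviality of $n$ at $p \notin S$. \textbf{The main obstacle} I anticipate is exactly this last bookkeeping: carefully tracking, prime by prime, how the valuation of the transformed lower-row vector $(c', d')$ depends on $v_p(c)$, and establishing the lower bound involving $(c, N/M^2)$ uniformly — this requires being precise about which power of $p$ the translation $n$ contributes and how it interacts with the scaling matrix, and is the place where the constant $3/4$ (rather than something cleaner) gets forced. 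Everything else is either a direct invocation of Proposition~\ref{keyprop1} or the standard geometry of the $\SL_2(\Z)$ fundamental domain.
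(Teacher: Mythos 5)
Your treatment of parts (1) and (2) is correct and coincides with the paper's: choose $\tau$ so that $z_0=\tau^{-1}z$ lies in the standard fundamental domain, feed $\tau$ into Proposition~\ref{keyprop1}, and use $\Im(\sigma^{-1}Wz)=(M_1^2/N_S)\Im(z_0)$ together with $M^2/N\le M_1^2/N_S$ (your exponent computation for the latter is the same as the paper's remark that $N_S/M_1^2$ divides $N/M^2$). The problem is part (3), which is the entire content of the proposition and which you do not actually prove: you reduce it to the assertion that a certain scalar satisfies $|\lambda|^2\gg M^2(c,N/M^2)/N$ and then defer that assertion as ``bookkeeping.'' That assertion \emph{is} the theorem, and the route you sketch for it does not deliver it.

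Concretely, writing $\sigma^{-1}Wz=Dw$ with $D=\mathrm{diag}(M_1,N_S/M_1)$ and $w=n^{-1}z_0$, one has $c(\sigma^{-1}Wz)+d=\bigl(cM_1^2z_0+(dN_S-cM_1^2\nu)\bigr)/N_S$; pulling out the content $e$ of this integer vector and using $|c''z_0+d''|^2\ge1$ for the primitive part gives $|c(\sigma^{-1}Wz)+d|^2\ge e^2/N_S^2$, and the best available control on $e$ is via $(cM_1^2,N_S)$, which produces a bound of order $\bigl(M_1^2(c,N_S/M_1^2)/N_S\bigr)^2$ --- the \emph{square} of the claimed quantity, because the scalar multiplies both coordinates. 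A test case shows this is not an artifact of sloppy bookkeeping: take $N=p^2$, $z=i$, $\tau=\left(\begin{smallmatrix}0&-1\\1&0\end{smallmatrix}\right)$ (so $z_0=i$ is admissible); the construction forces $S=\{p\}$, $M_1=M=1$, $W=\left(\begin{smallmatrix}0&-1\\p^2&0\end{smallmatrix}\right)$, $\sigma=-I$, hence $\sigma^{-1}Wz=i/p^2$, and $(c,d)=(1,0)$ gives $|c(\sigma^{-1}Wz)+d|^2=p^{-4}$, far below the target $3/(4p^2)$. So for an arbitrary admissible $\tau$ the stated inequality can fail, and no local valuation-tracking will rescue your argument; one must exploit the existential freedom in the statement (the choice of $\tau$ or of the other data). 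Two further slips: $|c'w'+d'|^2\ge\Im(w')$ is false for primitive $(c',d')$ and $w'$ in the fundamental domain (take $w'=100i$, $(c',d')=(0,1)$; the correct statement is $|c'w'+d'|^2\ge1$), and the point at which the fundamental-domain inequality must be applied is $n^{-1}z_0$, a rational non-integral translate of $z_0$ that is \emph{not} in the fundamental domain --- this is exactly why the paper proves the separate lemma $\Im(z_0)\ge\tfrac34\Im(\gamma nz_0)$ and where the constant $3/4$ enters. For comparison, the paper's argument for (3) is structurally different: it conjugates by a matrix $\gamma'\in\SL_2(\Q)$ with lower row $(cn_2,dn_2)$ chosen so that $\gamma'D=D\gamma$ for a genuine $\gamma\in\SL_2(\Z)$, and reads the bound off $\Im(\gamma'\sigma^{-1}Wz)$; but you should be aware that even there the step is delicate --- since $|cn_2w+dn_2|^2=n_2^2|cw+d|^2$, the displayed chain yields $3/(4n_2^2)$ rather than the asserted $3/(4n_2)$, which is consistent with the test case above. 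In short, part (3) is the crux, not a routine verification, and your proposal leaves it unresolved.
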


\begin{remark}In the above Proposition, we can always choose $\sigma$ to lie in some fixed set of representatives for $\Gamma_0(N)\bs \SL_2(\Z)/N(\Z)$. This is because both $\Im(\sigma^{-1} W z)$ and the set of possible values for $(c\sigma^{-1} W z + d)$ depend only on the class of $\sigma$ in $\SL_2(\Z)/N(\Z)$, while any product of $\Gamma_0(N)$ on the left of $\sigma$ can be absorbed into the $W$. In particular, if $N$ is squarefree, then $\sigma$ can be taken to equal the identity (in which case $M =1$ and our result essentially reduces to~\cite[Lemma 1]{harcos-templier-1}.)
\end{remark}
We begin with an elementary lemma.
\begin{lemma} Let $z_0 \in \H$ such that $\Im(z_0) \ge \frac{\sqrt{3}}{2}$. Then for all $n \in N(\R)$, $\gamma \in \SL_2(\Z)$, we have $\Im(z_0) \ge \frac{3}{4}\Im(\gamma n z_0)$.
\end{lemma}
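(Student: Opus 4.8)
The plan is to reduce to the standard fact that a point with large imaginary part can only be moved ``down'' by the quotient map, and to track how far down it goes. Write $\gamma = \begin{pmatrix}a&b\\c&d\end{pmatrix} \in \SL_2(\Z)$ and let $w = n z_0 = z_0 + t$ for some $t \in \R$, so $\Im(w) = \Im(z_0)$. The first step is the elementary identity $\Im(\gamma w) = \frac{\Im(w)}{|cw+d|^2}$, which immediately shows that $\Im(\gamma n z_0) \le \Im(z_0)$ whenever $|cw+d| \ge 1$, and that suffices to get a constant much better than $3/4$. So the content is entirely in the regime $|cw+d| < 1$, which can only happen when $c \ne 0$.

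First I would dispose of the case $c = 0$: then $\gamma \in \pm N(\Z)$, so $\gamma n z_0$ is a horizontal translate of $z_0$ and $\Im(\gamma n z_0) = \Im(z_0) \ge \frac34 \Im(z_0)$ trivially. Next, assume $c \ne 0$, so $|c| \ge 1$. Write $w = u + iv$ with $v = \Im(z_0) \ge \frac{\sqrt3}{2}$. Then $|cw+d|^2 = (cu+d)^2 + c^2 v^2 \ge c^2 v^2 \ge v^2 \ge \frac34$. Hence
\begin{equation*}
\Im(\gamma n z_0) = \frac{\Im(w)}{|cw+d|^2} \le \frac{v}{3/4} = \frac{4}{3}\,\Im(z_0),
\end{equation*}
which is exactly the claimed inequality $\Im(z_0) \ge \frac34 \Im(\gamma n z_0)$.

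There is essentially no obstacle here; the only thing to be careful about is that the bound $|cw+d|^2 \ge c^2 v^2$ uses only the imaginary part and not any Diophantine information, and that $c^2 \ge 1$ for $c \ne 0$ — both trivial. The role of the hypothesis $\Im(z_0) \ge \frac{\sqrt3}{2}$ is precisely to make $v^2 \ge \frac34$, which is the slack needed for the constant $\frac34$; this is presumably calibrated to feed into the proof of Proposition~\ref{prop:gap}, where the factor $\frac{\sqrt3}{2}$ reappears. I would present the argument in the three short lines above, noting the $c=0$ case separately and then the one-line estimate for $c \ne 0$.
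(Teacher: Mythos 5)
Your proof is correct, and it is the sharp computation: the extremal case $c=\pm1$, $cu+d=0$, $v=\sqrt3/2$ shows the constant $3/4$ cannot be improved. The paper takes a slightly different route: after the same two reductions (absorbing $n$ into $z_0$ and translating $\Re(z_0)$ into $[-1/2,1/2]$), it declares the inequality ``immediate from the standard tiling of the upper half-plane by $\SL_2(\Z)$-translates of the fundamental domain,'' i.e.\ it invokes the geometric fact that the fundamental-domain representative of an orbit maximizes the imaginary part and that a point with $\Im \ge \sqrt3/2$ cannot sit too far below its representative. Your version replaces that appeal with the explicit identity $\Im(\gamma w)=\Im(w)/|cw+d|^2$ and the bound $|cw+d|^2\ge c^2v^2\ge 3/4$ for $c\ne 0$, which is self-contained, makes the origin of the constant $3/4$ transparent, and is if anything more careful than the paper's one-line citation (a point with $\Im(z_0)=\sqrt3/2$ and $|\Re(z_0)|\le 1/2$ need not lie in the fundamental domain, so the paper's ``immediate'' hides exactly the estimate you wrote out). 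The only cosmetic remark is that your $c=0$ case is subsumed by the general estimate once one notes $|cw+d|=|d|=1$ there, but separating it does no harm.
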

\begin{proof}By replacing $z_0$ by $nz_0$ if necessary, we may assume that $n=1$. Also, by translating $z_0$ horizontally by an integer, we may assume that $\Re(z_0)$ lies between $-1/2$ and $1/2$. Now, the Lemma is immediate from the standard tiling of the upper half-plane by $\SL_2(\Z)-$translates of the standard fundamental domain for $\SL_2(\Z)$.\end{proof}

\begin{proof}[Proof of Proposition~\ref{prop:gap}]By the standard fundamental domain for $\SL_2(\Z)$, there exists $\tau \in \SL_2(\Z)$ such that $z=\tau z_0$ where $\Im(z_0)\ge \frac{\sqrt{3}}{2}.$ Now, let $\sigma, W, N_S, M,M_1$ be as in Proposition~\ref{keyprop1}. Then $C(\sigma)=N/M$. Note that $\frac{M_1^2}{N_S} \ge \frac{M^2}{N}$ (since $\frac{N_S}{M_1^2} | \frac{N}{M^2}$) and $\sigma^{-1} W \tau = \left(\begin{smallmatrix}M_1 &0\\0&N_S/M_1\end{smallmatrix}\right)n^{-1}$.  Furthermore

\begin{align*} \Im(\sigma^{-1} W z) &= \Im(\sigma^{-1} W \tau z_0) \\
&= \Im \left( \begin{pmatrix}M_1 &0\\0&N_S/M_1\end{pmatrix}n^{-1} z_0 \right) \\ &= (M_1^2/N_S) \Im(z_0)\\ &\ge \frac{M^2\Im(z_0)}{N}\\ &\ge \frac{\sqrt{3}M^2}{2N}.
\end{align*}

Next, given any pair $(c,d) \neq (0,0)$, we need to prove that $\left|c(\sigma^{-1} W z) + d\right|^2 \ge \frac{3M^2(c, N/M^2) }{4N}.$ It suffices to prove the result only in the case that $c$ and $d$ are coprime. Let $c_1 = c/(c, N_S/M_1^2)$ and $n_2 = N_S/(cM_1^2, N_S)$. Note that $c_1$ and $dn_2$ are coprime. Note also that $$\frac1{n_2} = \frac{(c,N_S/M_1^2)}{N_S/M_1^2} \ge \frac{(c, N/M^2)M^2}N .$$  Pick any $\gamma = \begin{pmatrix}a&b\\c_1&dn_2\end{pmatrix} \in \SL_2(\Z)$ and put $\gamma' = \begin{pmatrix}a&bM_1^2/N_S\\cn_2&dn_2\end{pmatrix} \in \SL_2(\Q)$. By the previous lemma, it follows that $\Im (z_0) \ge (3/4)\Im (\gamma n z_0)$ for all $n \in N(\R)$. Also, recall that $\Im(\sigma^{-1} W z) = (M_1^2/N_S) \Im(z_0).$

Then we have \begin{align*} \frac{M_1^2 \Im(z_0)}{n_2 N_S\left|c(\sigma^{-1} W z) + d\right|^2} &= \frac{\Im(\sigma^{-1} W z)}{n_2 \left|c(\sigma^{-1} W z) + d\right|^2} \\ &= \Im(\gamma'\sigma^{-1} W z)\\ &= \Im \left( \gamma'\begin{pmatrix}M_1 &0\\0&N_S/M_1\end{pmatrix}n^{-1} z_0 \right) \\
&= \frac{M_1^2}{N_S}\Im \left(  \gamma n^{-1} z_0   \right) \\ &\le  \frac{4M_1^2}{3N_S}\Im (z_0)
\end{align*}

giving us $$\left|c(\sigma^{-1} W z) + d\right|^2 \ge \frac3{4n_2} \ge \frac{3(c, N/M^2)M^2}{4N} ,$$
as desired.
\end{proof}

\section{Some counting results}

Let $1 \le N=N_2N_0^2$ with $N_2$ squarefree and let $M$ be a positive integer that divides $N_0$ (so $M^2 |N$). We define the region $G(N;M) \subset \H$ to consist of the points $z=x+iy \in \H$ with the following properties: \begin{enumerate}

  \item $y \ge \frac{\sqrt{3}M^2}{2N}.$

\item  For any pair of integers $(c,d) \neq (0,0)$, we have $\left|cz + d\right|^2 \ge \frac{3M^2 (c, N/M^2)}{4N}.$

    \end{enumerate}

For $z \in \H$, any $\delta>0$, and any integer $l \ge 1$, define:

 $$\Delta(l,N;M):=\left\{ \gamma=\begin{pmatrix}a&b\\c&d\end{pmatrix} \in M_2(\Z) : c \equiv 0 \bmod{N}, a \equiv 1 \bmod{M}, \det(\gamma)= l \right\}.$$

 $$N_*(z,l,\delta,N;M):=\left|\{ \gamma=\begin{pmatrix}a&b\\c&d\end{pmatrix} \in  \Delta(l,N;M) :  u(\gamma z, z) \le \delta, c \neq 0, (a+d)^2 \neq 4l \} \right|.$$

 $$N_u(z,l,\delta,N;M):=\left|\{ \gamma=\begin{pmatrix}a&b\\c&d\end{pmatrix} \in  \Delta(l,N;M) :  u(\gamma z, z) \le \delta, c = 0, (a+d)^2 \neq 4l \} \right|.$$

 $$N_p(z,l,\delta,N;M):=\left|\{ \gamma=\begin{pmatrix}a&b\\c&d\end{pmatrix} \in  \Delta(l,N;M) :  u(\gamma z, z) \le \delta,  (a+d)^2 =4l \} \right|.$$

$$N(z,l,\delta,N;M) :=N_*(z,l,\delta,N;M)+ N_u(z,l,\delta,N;M) + N_p(z,l,\delta,N;M).$$

\begin{remark}These definitions are similar to ones in~\cite{harcos-templier-2} except that we have the added condition $a \equiv 1 \bmod{M}.$
\end{remark}

In the sequel, we will estimate the above quantities, ultimately proving a result (Proposition~\ref{prop:ampl}) which will be useful for the amplification method to be used later in this paper. For the convenience of the reader, we begin by quoting a result that will be frequently used in this section.

\begin{lemma}[Lemma 2 of \cite{schmidt}]\label{countinglemma}Let $L$ be a lattice in $\R^2$ and $D\subset \R^2$ be a disc of radius $R$. If $\lambda_1$ is the distance from the origin of the shortest vector in $L$, and $d$ is the covolume of $L$, then $$|L \cap D| \ll 1 + \frac{R}{\lambda_1} + \frac{R^2}{d}.$$

\end{lemma}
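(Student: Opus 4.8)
The plan is to run the classical reduction-theory argument for counting lattice points in a disc. If $L\cap D=\emptyset$ there is nothing to prove, so assume it is nonempty and fix $q\in L\cap D$. Every point of $L\cap D$ then lies in $q+\bigl(L\cap(D-q)\bigr)$, and $D-q$ is a disc of radius $R$ containing the origin, hence contained in the disc $B(0,2R)$ of radius $2R$ centred at $0$. Thus it suffices to bound $\#\{v\in L:\ |v|\le 2R\}$, at the cost of an absolute constant.

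Next I would bring in a reduced basis. Let $\lambda_1=\lambda_1(L)\le\lambda_2(L)=:\lambda_2$ be the successive minima of $L$ with respect to the Euclidean norm. In dimension two one may choose a basis $v_1,v_2$ of $L$ with $|v_1|=\lambda_1$ and $|v_2|=\lambda_2$ (classical Lagrange/Gauss reduction). The key point I would exploit is that such a basis is automatically well-angled: if $\theta\in(0,\pi)$ is the angle between $v_1$ and $v_2$, then $v_2\pm v_1\in L$ is linearly independent from $v_1$, so $|v_2\pm v_1|\ge\lambda_2=|v_2|$, which forces $\theta\in[\pi/3,2\pi/3]$; hence the distance from $v_2$ to the line $\R v_1$ is $|v_2|\sin\theta\ge\tfrac{\sqrt3}{2}\lambda_2$. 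I would also record the trivial inequality $d=\bigl|\det(v_1\mid v_2)\bigr|\le|v_1|\,|v_2|=\lambda_1\lambda_2$.

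Now the counting. Write a point of $L\cap B(0,2R)$ as $mv_1+nv_2$ with $m,n\in\Z$. Projecting orthogonally to $v_1$ annihilates the $v_1$-component, so $|n|\cdot\tfrac{\sqrt3}{2}\lambda_2\le|mv_1+nv_2|\le 2R$, giving at most $\ll 1+R/\lambda_2$ admissible values of $n$. For each fixed $n$, the set of $m$ with $mv_1+nv_2\in B(0,2R)$ is $\{m:\ mv_1\in B(-nv_2,2R)\}$, i.e. a set of consecutive integers indexing points spaced $\lambda_1=|v_1|$ apart on a line intersected with a segment of length at most $4R$; so there are $\ll 1+R/\lambda_1$ such $m$. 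Multiplying, $\#(L\cap D)\ll(1+R/\lambda_1)(1+R/\lambda_2)=1+R/\lambda_1+R/\lambda_2+R^2/(\lambda_1\lambda_2)$. Finally $R/\lambda_2\le R/\lambda_1$ since $\lambda_2\ge\lambda_1$, and $R^2/(\lambda_1\lambda_2)\le R^2/d$ by the recorded inequality $d\le\lambda_1\lambda_2$, collapsing the bound to $1+R/\lambda_1+R^2/d$.

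The two one-dimensional counts and the translation of the disc are entirely routine. The only step needing a little care is the input from reduction theory in $\R^2$: the existence of a basis realizing both successive minima, and the automatic angle bound that makes the transverse direction (controlling the number of $n$) well-behaved. Everything else is bookkeeping with absolute constants.
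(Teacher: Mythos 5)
Your argument is correct and complete: the reduction to a disc centred at the origin, the Gauss-reduced basis realizing both successive minima, the angle bound $\theta\in[\pi/3,2\pi/3]$ giving the transverse spacing $\gg\lambda_2$, and the final collapse using $\lambda_2\ge\lambda_1$ and $d\le\lambda_1\lambda_2$ are all sound. The paper itself supplies no proof (it simply cites Lemma 2 of Schmidt), and your reduction-theoretic argument is exactly the standard one behind that reference.
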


The next lemma, which counts general matrices, is a mild generalization of~\cite[Lemmas 2.2 and 2.3]{harcos-templier-2}.

\begin{lemma}\label{l:general}Let $z \in G(N;M)$ and $M^2 \le L \le N^{O(1)}$. Let $1 \le l_1 \equiv 1 \pmod{M}$, $l_1 \le N^{O(1)}$. Then the following hold.

\begin{equation}\label{eq1}\sum_{\substack{1\le l \le L \\ l \equiv 1 \bmod M}} N_*(z,l, N^{\epsilon}, N;M) \ll_{\epsilon} N^{O(\epsilon)} \left(  \frac{L}{MNy} + \frac{L^{3/2}}{M^2 \sqrt{N}}  +\frac{L^2}{M^2N} \right).\end{equation}
 \begin{equation}\label{eq2}\sum_{\substack{1\le l \le L \\ l \equiv 1 \bmod M}} N_*(z,l^2, N^\epsilon, N;M) \ll_{\epsilon} N^{O(\epsilon)} \left(   \frac{L}{Ny} + \frac{L^2}{M\sqrt{N}} + \frac{L^{3}}{MN}  \right).\end{equation}
\begin{equation}\label{eq3}\sum_{\substack{1\le l_2 \le L \\ l_2 \equiv 1 \bmod M}} N_*(z,l_1l_2^2, N^\epsilon, N;M) \ll_{\epsilon} N^{O(\epsilon)} \left(  \frac{L^{3/2}}{Ny} + \frac{L^3}{M\sqrt{N}} + \frac{L^{9/2}}{MN}  \right).\end{equation}

\end{lemma}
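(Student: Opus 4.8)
The plan is to reduce everything to counting lattice points in discs via Lemma~\ref{countinglemma}, following the strategy of~\cite[Lemmas 2.2, 2.3]{harcos-templier-2} but carrying the extra congruence condition $a \equiv 1 \pmod M$ throughout. First I would recall the basic geometry: for $\gamma = \begin{pmatrix}a&b\\c&d\end{pmatrix}$ with $c \neq 0$ and $\det \gamma = l$, the condition $u(\gamma z, z) \le \delta$ forces (up to $N^{O(\epsilon)}$ factors, since $\delta = N^\epsilon$) the quantities $|cz^2 + (d-a)z - b|$ and $|c|y$ to be controlled: specifically $|c| \ll \frac{l N^\epsilon}{y \cdot (\text{something})}$ and, once $c$ is fixed, $a$ ranges over an interval of length $\ll \sqrt{lcy}\,N^\epsilon$ (roughly), and then $b$ is essentially determined up to $O(1)$ choices once $a,c,d$ are fixed by the determinant equation. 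The key point where the geometry of $G(N;M)$ enters is the lower bound $|cz+d|^2 \ge \frac{3M^2(c,N/M^2)}{4N}$, which must be exploited exactly as in Harcos--Templier to get the $(c,N/M^2)$ and $M^2$ savings; combined with $y \ge \frac{\sqrt 3 M^2}{2N}$ this is what produces the $M$-powers in the denominators of~\eqref{eq1}--\eqref{eq3}.

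For~\eqref{eq1}, after summing over $l \le L$ the count of admissible $(a,c)$ pairs becomes a lattice-point count: $c$ runs over multiples of $N$ with $|c| \ll \frac{L N^\epsilon}{Ny}$ (using the width bound), for each such $c$ we get $\ll 1 + \sqrt{Lcy}/\text{(gcd factor)} + \dots$ choices of $a$ with $a \equiv 1 \pmod M$ — the congruence mod $M$ cuts this down by a factor $M$ or inserts an ``$1 + (\text{length})/M$'' — and then $b,d$ are pinned down up to $O(N^\epsilon)$ by $ad - bc = l$ together with $u(\gamma z,z)\le \delta$. Assembling these and summing the geometric-series-type contributions gives the three terms $\frac{L}{MNy}$, $\frac{L^{3/2}}{M^2\sqrt N}$, $\frac{L^2}{M^2 N}$; the first term has only one power of $M$ because it is the ``$1$'' (diagonal-ish / small-$c$) contribution where the $a$-count is $O(1)$ rather than a full interval, while the other two come from the genuinely two-dimensional count where the mod-$M$ condition bites twice (once via the $a$-interval and once via the $c$-multiples, effectively) — I would need to track this carefully to land exactly these exponents.

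Estimates~\eqref{eq2} and~\eqref{eq3} are then derived from~\eqref{eq1} by the standard trick of replacing the variable $l$ by $l^2$, respectively $l_1 l_2^2$: one substitutes $L \mapsto L^2$ (resp. the range of $l_1 l_2^2$ is $\le l_1 L^2 \le N^{O(1)}$) into the right-hand side of~\eqref{eq1} and then performs the sum over the remaining free variable $l$ (resp. $l_2$), using that the number of $l \le L$ with $l \equiv 1 \pmod M$ is $\ll 1 + L/M \ll N^{O(\epsilon)}(1+L/M)$ and crucially that the additional $M$'s already present prevent the bound from degrading. For~\eqref{eq2}: plug $L^2$ into~\eqref{eq1}, getting terms $\frac{L^2}{MNy} + \frac{L^3}{M^2\sqrt N} + \frac{L^4}{M^2 N}$ for a \emph{single} $l^2$, but since we want $\sum_{l \le L}$ and in fact only the count for each $l$ is already $N_*(z,l^2,\dots)$, one more careful bookkeeping — noting $l^2 \le L^2$ so the relevant ``$L$'' in \eqref{eq1} is really the max of $l^2$, and resumming — yields $\frac{L}{Ny} + \frac{L^2}{M\sqrt N} + \frac{L^3}{MN}$; similarly for~\eqref{eq3} with the extra $l_1$ and the sum over $l_2 \le L$ producing the $L^{3/2}, L^3, L^{9/2}$ numerators. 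The main obstacle I anticipate is not any single hard inequality but rather the careful bookkeeping of where exactly each power of $M$ is gained or lost: one must make sure the congruence $a \equiv 1 \pmod M$ is used to full effect in the lattice-point count (it should save a factor $M$ in each of the two ``dimensions'' of the count, except in the degenerate regimes) while simultaneously verifying that the geometry of $G(N;M)$ — through clauses (1) and (2) of its definition — contributes the expected $M^2(c,N/M^2)$ savings, and that these two sources of $M$-power do not accidentally get double-counted or dropped. I would cross-check the final exponents by specializing to $M=1$, where all three estimates must reduce to~\cite[Lemmas 2.2, 2.3]{harcos-templier-2}.
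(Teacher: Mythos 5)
Your treatment of \eqref{eq1} is essentially on the right track---it is the Harcos--Templier lattice count with the congruence carried along---but the accounting of the $M$-powers is off. The two independent savings in the second and third terms of \eqref{eq1} do not come from ``the $a$-interval and the $c$-multiples''. They come from the pair of congruences $a-d\equiv 0$ and $a+d\equiv 2\pmod M$, both of which require first observing that $d\equiv 1\pmod M$ as well (this follows from $ad\equiv ad-bc=l\equiv 1\pmod M$, since $M\mid N\mid c$). The paper then counts $(a-d,b)$ jointly, writing $a-d=Mt$ and viewing $(t,b)$ as a point of the lattice $\langle 1,Mz\rangle$ (covolume $My$, shortest vector $\gg M/\sqrt N$ by the $G(N;M)$ condition) in a disc of radius $\sqrt{L}\,yN^{\epsilon/2}$, and counts $a+d$ separately in a progression mod $M$ inside an interval of length $\ll L^{1/2}N^\epsilon$; the count of admissible $c$, which is $\ll L^{1/2}/(Ny)$, is independent of $M$.

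The genuine gap is in \eqref{eq2} and \eqref{eq3}. These cannot be derived from \eqref{eq1} by ``substituting $L\mapsto L^2$ and resumming'': the sum over square determinants $l=m^2\le L^2$ is only bounded by the right-hand side of \eqref{eq1} with $L$ replaced by $L^2$, namely $\frac{L^2}{MNy}+\frac{L^3}{M^2\sqrt N}+\frac{L^4}{M^2N}$, which is larger than the asserted $\frac{L}{Ny}+\frac{L^2}{M\sqrt N}+\frac{L^3}{MN}$ by a factor $L/M\ge M$ in each term, and no resummation recovers this loss. The extra strength of \eqref{eq2} comes from exploiting that the determinant is a perfect square: from $(a+d-2m)(a+d+2m)=(a-d)^2+4bc\ne 0$ and the divisor bound, once $c$, $b$ and $a-d$ are fixed there are only $N^{O(\epsilon)}$ admissible pairs $(a+d,m)$, so the trace-count factor $L^{1/2}/M$ is replaced by $N^{O(\epsilon)}$ and the sum over $m$ is absorbed into the count. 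Likewise \eqref{eq3} requires counting the pairs $(a+d,l_2)$ as solutions of the generalized Pell equation $(a+d)^2-4l_1l_2^2=(a-d)^2+4bc$ with $l_1$ fixed, exactly as in~\cite[Lemma 2.3]{harcos-templier-2}; summing the bound of \eqref{eq1} over $l_2$ would again lose a power of $L$. This divisor/Pell step on the trace is the essential input for \eqref{eq2}--\eqref{eq3} and is absent from your argument; your own sanity check (specializing to $M=1$) would already reveal that the substitution argument does not reproduce the Harcos--Templier exponents.
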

\begin{proof}Let $\gamma=\begin{pmatrix}a&b\\c&d\end{pmatrix}$ satisfy the conditions $\gamma \in  \Delta(l,N;M)$, $  u(\gamma z, z) \le \delta$,  $c \neq 0$, $(a+d)^2 \neq 4l$ for some $1 \le l\le L$, $l \equiv 1 \bmod{M}.$ As in ~\cite{harcos-templier-2}, we conclude that there are $\ll_\epsilon N^{O(\epsilon)} L^{\frac12}/(Ny)$ possible values for $c$ and that the following inequality is satisfied: $$|-cz^2 + (a-d)z + b|^2 \le Ly^2 N^\epsilon.$$ We note that $(a-d) \equiv 0 \pmod{M}$. Putting $t=(a-d)/M$, and applying Lemma~\ref{countinglemma} to the lattice $\langle 1, Mz \rangle$ (note that $R = \sqrt{L}yN^{\epsilon/2}$, $d=My$ and $\lambda_1^2 \gg M^2/N$) we conclude that for each $c$, the number of pairs $(t, b)$ satisfying the above inequality is $\ll_\epsilon N^\epsilon (1 + \sqrt{LN}y/M + Ly/M).$ Moreover, as in~\cite{harcos-templier-2}, we conclude that $|a+d| \ll_\epsilon N^\epsilon L^\frac12.$ Since $a+d \equiv 2 \bmod{M}$, it follows that there are $ \ll_\epsilon N^\epsilon (L^\frac12/M)$ possibilities for $a+d$. This concludes the proof of~\eqref{eq1}.
Next, we prove~\eqref{eq2}. It suffices to show that
 $$\sum_{\substack{1\le l \le L \\ l =m^2 \\ m \equiv 1 \bmod M}} N_*(z,l, N^\epsilon, N;M) \ll_{\epsilon} N^{O(\epsilon)} \left(   \frac{L^{1/2}}{Ny} + \frac{L}{M\sqrt{N}} + \frac{L^{3/2}}{MN}  \right).$$
To prove this, we proceed exactly as in the previous case, except that we deal with the number of possibilities for $a+d$ differently. Indeed, we have the equation $$(a+d - 2m)(a+d+2m) = (a-d)^2 +4bc.$$ Hence, given $c$, $b$, $a-d$, there are $\ll_\epsilon N^\epsilon$ pairs $(a+d, m)$ satisfying the given constraints. This proves the desired bound.

Finally, we deal with~\eqref{eq3}. Once again, we proceed as in the first case (note that now we have $R = L^{3/2}yN^{\epsilon/2}$, $d=My$ and $\lambda_1^2 \gg M^2/N$). However this time we deal with $a+d$ differently, namely via the observation that the pair $(a+d, l_2)$ satisfies a generalized Pell's equation. As the details are identical to~\cite[Lemma 2.3]{harcos-templier-2}, we omit them.\footnote{In fact, it turns out that we can completely avoid dealing with this case by making a small adjustment in the proof of our main theorem; see Remark~\ref{r:amplifier}.}
\end{proof}

Next we count upper-triangular matrices. The
lemma below is a mild generalization of~\cite[Lemma 2.4]{harcos-templier-2}

\begin{lemma}Let $z \in G(N;M)$ and $M\le L \le N^{O(1)}$. Then the following estimates hold. \begin{equation}\label{eq4}\sum_{\substack{1\le l_1, l_2 \le L \\ l_1 \equiv l_2 \equiv 1 \bmod M \\ l_1, l_2 \text{ are primes }}} N_u(z,l_1l_2, N^\epsilon, N;M) \ll_{\epsilon} N^{O(\epsilon)} \left(\frac{L}{M} +    \frac{L^2y\sqrt{N}}{M^2} + \frac{L^3y}{M^2}   \right).\end{equation}

\begin{equation}\label{eq5}\sum_{\substack{1\le l_1, l_2 \le L \\ l_1 \equiv l_2 \equiv 1 \bmod M\\ l_1, l_2 \text{ are primes }}} N_u(z,l_1l_2^2, N^\epsilon, N;M) \ll_{\epsilon} N^{O(\epsilon)} \left( \frac{L}{M} +   \frac{L^{5/2}y\sqrt{N}}{M^2} + \frac{L^4y}{M^2}   \right).\end{equation}
\begin{equation}\label{eq6}\sum_{\substack{1\le l_1, l_2 \le L \\ l_1 \equiv l_2 \equiv 1 \bmod M\\ l_1, l_2 \text{ are primes }}} N_u(z,l_1^2l_2^2, N^\epsilon, N;M) \ll_{\epsilon} N^{O(\epsilon)} \left( 1 +   \frac{L^2 y \sqrt{N}}{M} + \frac{L^4y}{M}   \right).\end{equation}
\begin{equation}\label{eq7}\sum_{\substack{1\le l_1 \le L \\ l_1 \equiv 1 \bmod M}} N_u(z,l_1, N^\epsilon, N;M) \ll_{\epsilon} N^{O(\epsilon)} \left( 1 +   \frac{L^{1/2} y \sqrt{N}}{M} + \frac{Ly}{M}   \right).\end{equation}

\end{lemma}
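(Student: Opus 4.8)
The plan is to adapt the count of upper‑triangular matrices in \cite[Lemma 2.4]{harcos-templier-2}, carrying the congruence $a\equiv 1\bmod M$ through every estimate and exploiting the arithmetically refined Diophantine condition built into the region $G(N;M)$. First I would extract the basic inequalities. Since $G(N;M)$ is invariant under integer translations we may take $z=x+iy$ with $|x|\le 1/2$. If $\gamma=\begin{pmatrix}a&b\\0&d\end{pmatrix}\in\Delta(l,N;M)$ then $ad=l$ and $a\equiv d\equiv 1\bmod M$, so $M\mid(a-d)$; a direct computation of $u(\gamma z,z)$ shows that $u(\gamma z,z)\le N^\epsilon$ is equivalent to
\[
\bigl((a-d)x+b\bigr)^2+(a-d)^2y^2\le 4ly^2N^\epsilon .
\]
This forces $|a-d|\ll\sqrt{l}\,N^{\epsilon/2}$, leaves $\ll 1+\sqrt{l}\,yN^{\epsilon/2}$ admissible values of $b$ for each pair $(a,d)$, and gives $|(a-d)z+b|^2\le 8ly^2N^\epsilon$. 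The crucial new input is to feed this into the second condition defining $G(N;M)$, applied to $(c,d'):=(a-d,b)$, namely $|(a-d)z+b|^2\ge\tfrac{3M^2(a-d,\,N/M^2)}{4N}$, which yields
\[
M^2\bigl(a-d,\,N/M^2\bigr)\ll Nly^2N^\epsilon .
\]
Because $(a-d,N/M^2)\ge 1$, this already shows that the sum in \eqref{eq7} is empty unless $\tfrac{L^{1/2}y\sqrt N}{M}\gg N^{-\epsilon/2}$, and similarly for \eqref{eq4}--\eqref{eq6}; in the remaining ranges the constant term $1$ on the right-hand sides is harmless.

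Next I would parametrise and reorganise. Up to the $\ll 1+\sqrt{l}\,yN^{\epsilon/2}$ choices of $b$, a matrix counted by $N_u(z,l,N^\epsilon,N;M)$ is determined by the divisor pair $(a,d)$ of $l$ with $a\equiv 1\bmod M$ and $|a-d|$ small, equivalently by the pair $(e,s):=(a-d,\,a+d)$ with $s^2-e^2=4l$, $M\mid e$, $e\ne 0$; for fixed $l$ there are $\ll\tau(4l)\ll N^\epsilon$ choices of $e$. I would therefore rewrite each of \eqref{eq4}--\eqref{eq7} as a double sum, the outer one over $e$ (a nonzero multiple of $M$ with $|e|\ll\sqrt L\,N^{\epsilon/2}$ and $(e,N/M^2)\ll Nly^2N^\epsilon$) and the inner one over the admissible $s$ (equivalently $l$), weighted by the number of $b$. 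A further necessary condition, for a given $e$, is $\lVert ex\rVert\le 2\sqrt{l}\,yN^{\epsilon/2}$, i.e.\ $x$ must be abnormally close to a fraction with denominator $|e|$. The number of $e$ that are multiples of $M$, bounded by $E$, and satisfy $\lVert ex\rVert\le\rho$ is then estimated by a Stern--Brocot/Farey dissection exactly as in \cite{harcos-templier-2}; the only changes are that the relevant lower bounds for $\lVert cx\rVert$ now come from the second condition defining $G(N;M)$ together with its arithmetic factor $(c,N/M^2)$, and that $e$ is confined to an arithmetic progression modulo $M$. Summing the resulting estimates against the $b$-counts produces \eqref{eq7}.

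For \eqref{eq4}--\eqref{eq6} the argument is the same once one uses that $l_1,l_2$ are prime: the divisor pair $(a,d)$ of $l$ is then forced into one of a bounded list of shapes (e.g.\ $\{l_1,l_2\}$ for $l=l_1l_2$, and $\{l_1^2,l_2^2\}$, $\{l_1^2,l_2\}$, $\{l_1l_2^2,l_1\}$, \dots\ in the other cases), and only the shapes with $a\asymp d$ up to $N^{O(\epsilon)}$ can survive the first inequality; the inner sum then runs over primes in short intervals in arithmetic progressions modulo $M$, to which the same dissection applies (and, as the footnote attached to \eqref{eq3} in Lemma~\ref{l:general} records, the case $l=l_1l_2^2$ can anyway be bypassed by a small change of amplifier, so that only \eqref{eq4}, \eqref{eq6} and \eqref{eq7} need be pushed through in full). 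The main obstacle throughout is the bookkeeping of the factors of $M$: restricting the amplifier to primes $\equiv 1\bmod M$ makes $e=a-d$ run only over multiples of $M$, which by itself would degrade the counting, and the point of the whole setup is that this loss is recovered precisely from the stronger Diophantine inequality built into $G(N;M)$ — whose arithmetic factor $(c,N/M^2)$ is what makes $M^2(a-d,N/M^2)\ll Nly^2N^\epsilon$ so restrictive. Making sure these two effects balance, so that the final bounds come out with $L^{1/2}y\sqrt N/M$ and $Ly/M$ rather than with a spurious extra power of $L$ or $M$, is the delicate step, and it is there that one must reproduce the argument of \cite[Lemma 2.4]{harcos-templier-2} with care rather than quote it verbatim.
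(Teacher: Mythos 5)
Your setup is correct and matches the paper's: you derive $|(a-d)z+b|^2\le 4ly^2N^\epsilon$, observe $M\mid(a-d)$, and correctly identify that the arithmetic factor $(c,N/M^2)$ in the definition of $G(N;M)$ is what compensates for restricting $a-d$ to multiples of $M$. But the core quantitative step — counting the pairs $(a-d,b)$ — is not actually carried out, and the route you defer it to would not go through as described. You propose to separate variables: bound the number of admissible $b$ per $e=a-d$ by $1+\sqrt{l}\,yN^{\epsilon/2}$, and then count the $e\equiv 0\bmod M$ with $\lVert ex\rVert\le\rho$ by a Farey/Stern--Brocot dissection ``exactly as in [HT2].'' That is a misattribution: neither [HT2, Lemma 2.4] nor this paper dissects $x$; they count the points $(a-d)z+b$ as lattice points of a \emph{two-dimensional} lattice in a disc. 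More importantly, the one-dimensional dissection genuinely breaks down: the condition $|ez+b|^2\ge\frac{3M^2(e,N/M^2)}{4N}$ couples $x$ and $y$, and once $|e|\gtrsim M/(y\sqrt N)$ the term $e^2y^2$ alone can satisfy it, so you get \emph{no} lower bound on $\lVert ex\rVert$ in exactly the range of $e$ that dominates the count. The term $L^{a}y\sqrt N/M$ in each of \eqref{eq4}--\eqref{eq7} is the signature of the two-dimensional argument: one applies Lemma~\ref{countinglemma} to the lattice $\langle 1,Mz\rangle$ with covolume $My$, shortest vector $\lambda_1^2\gg M^2/N$ (from the $G(N;M)$ condition with $(c,N/M^2)\ge 1$) and radius $R=\sqrt\Lambda\,yN^{\epsilon/2}$ where $\Lambda$ bounds the determinant, giving $\ll_\epsilon N^\epsilon(1+\sqrt{\Lambda N}\,y/M+\Lambda y/M)$ pairs $(a-d,b)$; this is precisely the ``modification for $M$'' already spelled out in the proof of Lemma~\ref{l:general}.

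The second half of the argument — how many determinants $l$ are compatible with a fixed $e=a-d$ — is where the differing shapes of \eqref{eq4}--\eqref{eq7} come from, and you only gesture at it. Concretely: for $l=l_1l_2$ the surviving shape is $\{a,d\}=\{l_1,l_2\}$, and for fixed $e$ there are $\ll L/M$ pairs of primes $\equiv 1\bmod M$ with $l_1-l_2=\pm e$; multiplying the lattice count (with $\Lambda=L^2$) by $L/M$ gives \eqref{eq4}, and similarly for \eqref{eq5}. For \eqref{eq6} the multiplicity is only $N^{O(\epsilon)}$ because $l_1^2-l_2^2=\pm e$ has $\ll\tau(e)$ solutions, which is why only a single power of $M$ appears there. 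Your phrase ``the inner sum then runs over primes in short intervals \ldots to which the same dissection applies'' does not pin down these multiplicities, and without them the exponents of $L$ and $M$ in the stated bounds cannot be recovered.
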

\begin{proof}Let $\gamma=\begin{pmatrix}a&b\\0&d\end{pmatrix}$ satisfy the conditions $\gamma \in  \Delta(l,N;M)$, $u(\gamma z, z) \le \delta$, $(a+d)^2 \neq 4l$ for some $1 \le l \le \Lambda, \ l \equiv 1 \bmod{M}.$ As in ~\cite{harcos-templier-2}, we conclude that the following inequality is satisfied: $$|(a-d)z + b|^2 \le \Lambda y^2 N^\epsilon.$$ Also, note that $(a-d) \equiv 0 \pmod{M}$. The rest of the proof is identical to the proof of~\cite[Lemma 2.4]{harcos-templier-2}, with the modifications for $M$ as in Lemma~\ref{l:general} above.
\end{proof}

Finally we count parabolic matrices. The next lemma is a significant extension of~\cite[Lemma 2]{harcos-templier-1}.
\begin{lemma}\label{l:para}Let $z \in G(N;M)$ and $1 \le l \equiv 1 \pmod{M}$. Then  \begin{enumerate}

\item $N_p(z,l, N^\epsilon, N;M) = 0$ if $l$ is not a perfect square.

\item Suppose that $l=m^2$ with $m>0$ an integer. Suppose also that both $l$ and $y$ are $\le N^{O(1)}$. Then $$N_p(z,l, N^\epsilon, N;M) \ll_\epsilon 1 + N^{O(\epsilon)} \left( \frac{myN_0}{ M}  + \frac{mN_0}{N}\right).$$

\end{enumerate}
\end{lemma}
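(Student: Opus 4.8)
The plan is to analyze parabolic matrices $\gamma = \begin{pmatrix}a&b\\c&d\end{pmatrix} \in \Delta(l,N;M)$ with $(a+d)^2 = 4l$, exploiting the constraints $c \equiv 0 \bmod N$, $a \equiv 1 \bmod M$, $\det(\gamma) = l$, together with the geometric condition $u(\gamma z, z) \le N^\epsilon$ and the fact that $z \in G(N;M)$ has large imaginary part and good diophantine separation. For part (1), if $(a+d)^2 = 4l$ then $a+d = \pm 2m$ for $m = \sqrt l$ an integer, so $l$ being a non-square forces the set to be empty; this is immediate. For part (2) we may assume $a + d = 2m$ (the case $a+d = -2m$ is handled by replacing $\gamma$ with $-\gamma$, which costs only a factor of $2$). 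Writing $a = m + t$, $d = m - t$, the determinant condition $ad - bc = m^2$ becomes $-t^2 - bc = 0$, i.e. $t^2 = -bc$. The congruence $a \equiv 1 \bmod M$, combined with $l = m^2 \equiv 1 \bmod M$ (so $m \equiv \pm 1 \bmod M$), pins down $t \bmod M$.

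The core of the argument is the geometric input. As in~\cite{harcos-templier-1, harcos-templier-2}, the condition $u(\gamma z, z) \le N^\epsilon$ translates (after clearing denominators and using $\det \gamma = m^2$) into an inequality of the shape
$$|{-}cz^2 + (a - d)z + b|^2 = |{-}cz^2 + 2tz + b|^2 \ll N^{O(\epsilon)} \, m^2 y^2,$$
together with a bound $|c| \ll N^{O(\epsilon)} m / y$ on the bottom-left entry. I would first count the possible values of $c$: since $N | c$ and $|c| \ll N^{O(\epsilon)} m/y$, and crucially since $z \in G(N;M)$ we may additionally use $y \ge \tfrac{\sqrt 3 M^2}{2N}$ to see $|c|/N \ll N^{O(\epsilon)} m/(Ny) \ll N^{O(\epsilon)} m / M^2$; combined with writing $N = N_2 N_0^2$ this should yield $\ll N^{O(\epsilon)}(1 + myN_0/M \cdot \text{(something)})$ candidates — but more precisely one wants to separate the case $c = 0$ from $c \neq 0$. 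For $c \neq 0$ write $c = N_0 c'$ if appropriate and bound the number of $c'$; for each such $c$, the displayed inequality confines the pair $(t, b)$ — with $t \equiv t_0 \bmod M$ fixed — to a disc in a suitable lattice, and I would apply Lemma~\ref{countinglemma} to the lattice $\langle 1, Mz\rangle$ (covolume $My$, shortest vector $\gg M/\sqrt N$ by the $G(N;M)$ condition) to count these. The relation $t^2 = -bc$ then further rigidifies things: given $c$, the value $t$ determines $b = -t^2/c$, so really one is counting admissible $t$, which satisfy $|2tz| \ll N^{O(\epsilon)} m y$ hence $|t| \ll N^{O(\epsilon)} m$, and lie in a fixed residue class mod $M$, giving $\ll N^{O(\epsilon)}(1 + m/M)$ values of $t$ per value of $c$.

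Multiplying the count for $c$ by the count for $(t,b)$ per $c$, and carefully tracking where the divisor $N_0$ enters — it enters because $c \equiv 0 \bmod N$ forces $c$ to be a multiple of $N = N_2 N_0^2$, but the \emph{number} of parabolic classes, or the lattice structure in which $(t,b)$ lives, only sees $N_0$ in certain places — should produce the claimed bound $1 + N^{O(\epsilon)}\bigl(myN_0/M + mN_0/N\bigr)$. The leading term $myN_0/M$ should come from (number of $c \neq 0$, of size $\ll myN_0/(MN) \cdot N$, i.e.\ $\ll$ something) times ($\ll 1 + m/M$ choices of $t$), while the $mN_0/N$ term is a lower-order contribution from small parameter ranges; the $1$ accounts for $c = 0$, $t = 0$ (the scalar matrices $\pm m \cdot \mathrm{Id}$, of which there is essentially one in the relevant range). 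I expect the main obstacle to be the \textbf{bookkeeping of $N_0$}: one must pin down exactly which occurrences of $N$ in the squarefree-$N$ argument of~\cite{harcos-templier-1} get replaced by $N_0$ and which stay as $N$, since $c$ ranges over multiples of $N$ but the arithmetic of the Pell-type / factorization relation $t^2 = -bc$ interacts with the squarefree part $N_2$ and squarefull part $N_0^2$ differently. A secondary subtlety is handling the degenerate sub-case $c = 0$ correctly (parabolic upper-triangular matrices with $(a+d)^2 = 4l$ force $a = d = m$, $b$ arbitrary subject to the size bound $|b| \ll N^{O(\epsilon)} m y$ and $b \equiv 0 \bmod M$... wait, actually $a = d = m$ with no lower-order term, so the determinant is automatically $m^2$ and $b$ ranges freely), which contributes $\ll 1 + N^{O(\epsilon)} my/M$ and must be absorbed into the stated bound. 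Once these are pinned down, the estimate follows by the same disc-counting mechanism already used repeatedly in this section.
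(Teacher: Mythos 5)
Your part (1) is fine, and your treatment of the upper-triangular case $c=0$ is salvageable, but the counting scheme you propose for $c\neq 0$ has a genuine gap: it cannot produce the stated bound. You count the possible lower-left entries ($N\mid c$, $|c|\ll N^{O(\epsilon)}m/y$, so $\ll N^{O(\epsilon)}m/(Ny)$ choices) and then, for each, $\ll N^{O(\epsilon)}(1+m/M)$ choices of $t=(a-d)/2$, with $b$ determined by $t^2=-bc$. Test this against the squarefree case $N_0=M=1$ with $y\asymp 1/N$ (which is allowed in $G(N;M)$): your count is at least $\asymp m/(Ny)\asymp m$, whereas the lemma asserts $N^{O(\epsilon)}(1+my+m/N)=N^{O(\epsilon)}$ for $m\le N$. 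Even strengthening your argument by noting $c\mid t^2$ (so $t$ is divisible by a quantity $\ge\sqrt{|c|}\ge\sqrt N$) still leaves a loss of order $m$. The problem is structural: a one-dimensional count of lower-left entries discards the information carried by the diagonal entries, and in the parabolic case there is no analogue of the $(a+d)$-saving used for the generic matrices.

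The paper's proof rests on a different parametrization that you do not reach. Since $\gamma$ is parabolic it fixes a cusp; conjugating by $\tau\in\SL_2(\Z)$ with $\tau^{-1}=\left(\begin{smallmatrix}a&b\\c&d\end{smallmatrix}\right)$ one writes $\gamma=\pm\left(\begin{smallmatrix}m+cdt&d^2t\\-c^2t&m-cdt\end{smallmatrix}\right)$, so the congruence on the lower-left entry becomes $N\mid c^2t$ --- the fact that a \emph{square} of $c$ appears is the crucial point, as it forces $c$ to be divisible by roughly $\sqrt{N/(t_1,N)}\ge N/(N_0t_1)$ where $t_1=(t,N^\infty)$. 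Two further ingredients are needed: a $p$-adic inequality showing $t^2M^4(c,N/M^2)^2/N^2\ge t_0^2M^2/N_0^2$ for the $N$-free part $t_0$ of $t$, which bounds $t_0\le N^\epsilon yN_0\sqrt{l}/M$ and is the \emph{only} place $N_0$ enters; and, for each fixed $t$, a two-dimensional application of Lemma~\ref{countinglemma} to the lattice $\langle 1,\{\sqrt{N/(t_1,N)}\}z\rangle$ counting pairs $(c,d)$ in the disc $|cz+d|^2\ll N^\epsilon y\sqrt{l}/t$. You correctly anticipated that the interaction of the relation $t^2=-bc$ with the squarefull part of $N$ is the crux, but labelled it ``bookkeeping''; it is in fact the main idea of the proof, and without it the estimate does not close.
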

\begin{proof} Let $\gamma \in \Delta(l,N;M)$ be  such that $u(\gamma z, z) \le N^\epsilon$ and $\trace(\gamma)^2 = 4l$. Then $\gamma$ fixes some point $\tau(\infty)$ where $\tau \in \SL_2(\Z).$ Hence $\gamma' = \tau^{-1}\gamma \tau$ fixes the point $\infty$, and hence is a parabolic upper-triangular matrix with integer coefficients and determinant $l$. It follows that $l$ must be a perfect square. Writing $\gamma' = \pm \begin{pmatrix}m&t\\0&m\end{pmatrix}$ (where $m^2= l$ and $ t \in \Z$) and $\tau^{-1} = \begin{pmatrix}a&b\\c&d\end{pmatrix}$, we see that $\gamma = \pm \begin{pmatrix}m + cdt & d^2t\\ -c^2t &m - cdt    \end{pmatrix}.$  This shows that $N | c^2t$. Moreover
$u(\gamma z, z) = u(\gamma'z', z') $ where $z'=\tau^{-1}z$. Writing $z'=x'+iy'$, we note that \begin{equation}\label{paraeq}N^\epsilon \ge u(\gamma'z', z') = \frac{t^2}{4ly'^2} = \frac{t^2|cz+d|^4}{4ly^2} \gg \frac{t^2 (c, N/M^2)^2 M^4}{ly^2 N^2}.\end{equation}

Next, if $t=0$ then $\gamma' = \gamma = \pm \begin{pmatrix}m&0\\0&m\end{pmatrix}$  is the only possibility. So it suffices to consider the case $t>0$. We claim that if $t >0$ then $\frac{t^2 M^4 (c, N/M^2)^2}{N^2} \ge \frac{t_0^2M^2}{N_0^2} $ where $t_0 = t/(t, N^\infty)$ is the $N$-free part of $t$. Let $p$ divide $N$, with $p^{t'} || t$, $p^{c'}||c$, $p^{n_p}||N $, $p^{m_p}||M$. Then, it suffices to show that $$ \min(2t' +2c'+ 4m_p,2t' + 2n_p) \ge   2n_p + 2m_p - 2\lfloor \frac{n_p}{2} \rfloor.$$ If $2c'+ 4m_p + 2t' \ge 2n_p$ this is immediate (note that $m_p \le \lfloor \frac{n_p}{2} \rfloor$). So we assume that $2c'+ 4m_p + 2t' < 2n_p$.  It suffices in this case to prove that $2t' +2c' \ge 2n_p  - 2\lfloor \frac{n_p}{2} \rfloor. $ But this follows immediately from the fact that $2t' +2c' \ge n_p$ and because $2t' +2c'$ is even.

 So we have proved that if $t \neq 0$ then $$N^\epsilon \ge \frac{t^2 (c, N/M^2)^2 M^4}{ly^2 N^2} \ge \frac{t_0^2M^2}{ly^2N_0^2}.$$
Hence $$t_0 \le N^\epsilon \frac{yN_0 \sqrt{l}}{M}.$$ On the other hand~\eqref{paraeq} implies that $$t \le N^\epsilon \frac{y \sqrt{l}}{|cz+d|^2} \ll \frac{y N\sqrt{l}}{M^2}.$$ Write $t=t_0t_1$ where $t_1 |N^\infty$. Given any such $t=t_0t_1$, let us count the number of admissible $\gamma$; this reduces to counting the number of admissible $c,d$.   Given any integer $f = \prod p_i^{a_i}$, we define (temporarily) $\{\sqrt{f} \} = \prod p_i^{\lceil a_i/2 \rceil}.$ Note that if $f$ divides $a^2$ for some integer $a$, then $\{\sqrt{f} \}$ divides $a$. Note also that $\{\sqrt{N} \} = N_2N_0 = N/N_0.$ We have already proved that $ N |c^2t_1$. It follows that $\{ \sqrt{g} \}$ divides $c$ where $g=N/(t_1, N)$. Note also that $\{ \sqrt{g} \} \ge \frac{N_2N_0}{(t_1, N)} \ge \frac{N}{N_0t_1}.$  Let us count the number of pairs of integers $c,d$ such that $|cz + d|^2 \ll N^\epsilon \frac{y \sqrt{l}}{t}$ and $\{ \sqrt{g} \}$ divides $c$. Considering the lattice $\langle 1, \{ \sqrt{g} \}z \rangle$ we see that the quantity $\lambda_1$ for this lattice satisfies $\lambda_1^2 \ge \frac{(\{ \sqrt{g} \}M^2,N)}{N} \ge \frac{\{ \sqrt{g} \}M}{N} \ge \frac{M}{N_0t_1}$ (we used here the fact that $\{ \sqrt{g} \}M$ divides $N$, which follows as $\{ \sqrt{g} \}$ divides $N_2N_0$ and $M$ divides $N_0$). Furthermore, the covolume $d$ of this lattice satisfies $ d \ge \frac{N y}{N_0t_1}.$ Hence, using Lemma~\ref{countinglemma},   the total number of admissible $c,d$ for each fixed $t=t_0t_1$ is $$\ll_\epsilon 1 + \frac{N^\epsilon ((N_0t_1)^{1/2} y^{1/2}l^{1/4}}{t^{1/2} \sqrt{M}} + \frac{N^\epsilon  \sqrt{l} N_0}{t_0 N }.$$ Hence, the total number of parabolic matrices $\gamma \in \Delta(l,N;M)$ such that $u(\gamma z, z) \le N^\epsilon$ is $$\ll_\epsilon 1 + N^\epsilon \sum_{\substack{1 \le t_0 \le \frac{yN_0 \sqrt{l}}{M} \\ (t_0,N)=1}} \sum_{\substack{1 \le t_1 \le  \frac{yN \sqrt{l}}{M^2t_0}\\ t_1|N^\infty}}\left(1 + \frac{(N_0ym)^{1/2}}{(t_0M)^{1/2}} + \frac{m N_0}{t_0 N } \right)$$ $$ \ll_\epsilon 1 + N^{O(\epsilon)} \left(\frac{yN_0m}{M} + \frac{m N_0}{ N } \right)$$
In the last step above, we used a fact that will also be used a few times later in this paper: for all positive integers $X$, $N$, one has the bound  $\sum_{\substack{t_1 \le X \\ t_1 | N^\infty}} 1 \ll_\epsilon (NX)^\epsilon.$ The proof of this fact follows from Rankin's trick.\footnote{Observe that $\sum_{\substack{t_1 \le X \\ t_1 | N^\infty}} 1 \le X^\epsilon \prod_{t_1|N^\infty} t_1^{-\epsilon} = X^\epsilon \prod_{p|N} (1-p^{-\epsilon})^{-1}$ and then apply the divisor bound. }
\end{proof}

Combining all the above bounds, we get the following proposition, which is all that we will use later.

\begin{proposition}\label{prop:ampl}Let $1 \le N=N_2N_0^2$ with $N_2$ squarefree and let $M$ be a positive integer that divides $N_0$. Suppose that $z = x+iy \in G(N;M)$ and assume further that $y \le N^{-1/2}$. Let $M^2 \le \Lambda \le N^{O(1)}$.

Define $$y_l := \begin{cases} \frac{\Lambda}{M}, & l=1, \\ 1, & l\in\{l_1, l_1l_2, l_1l_2^2, l_1^2l_2^2\} \text{ with }\Lambda <l_1, l_2 < 2\Lambda \text{ primes, } l_1 \equiv l_2 \equiv 1 \mod M,\\ 0 & \text{ otherwise.}\end{cases}$$

Then \begin{equation}\label{amplesum}\sum_{l\ge 1}\frac{y_l}{\sqrt{l}} N(z,l, N^\epsilon, N;M) \ll_\epsilon N^{O(\epsilon)}  \left( \frac{\Lambda}{M} + \frac{\Lambda^2yN_0}{M^3}+ \frac{\Lambda^{5/2}}{M^2\sqrt{N}} + \frac{\Lambda^4}{MN} \right).\end{equation}

\end{proposition}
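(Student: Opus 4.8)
The plan is to collect the bounds from Lemmas~\ref{l:general}, the lemma counting upper-triangular matrices (equations \eqref{eq4}--\eqref{eq7}), and Lemma~\ref{l:para}, substitute them into the sum $\sum_{l\ge1}\frac{y_l}{\sqrt l}N(z,l,N^\epsilon,N;M)$, and check that every term that arises is dominated by one of the four terms on the right-hand side of \eqref{amplesum}. First I would split $N(z,l,\delta,N;M) = N_*(\cdot) + N_u(\cdot) + N_p(\cdot)$ and treat the three contributions separately, since the amplifier $y_l$ is supported on the finitely many values $l=1$ and $l \in \{l_1, l_1l_2, l_1l_2^2, l_1^2l_2^2\}$ with $l_1,l_2$ primes in $(\Lambda, 2\Lambda)$ that are $1 \bmod M$. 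Note that on the support, $\sqrt l \asymp$ an appropriate power of $\Lambda$, so dividing by $\sqrt l$ just adjusts the power of $\Lambda$ in a bookkeeping manner.

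For the $N_*$-term: when $l=1$ we have $y_1/\sqrt1 = \Lambda/M$, and $N_*(z,1,N^\epsilon,N;M)$ is bounded by the $l$-summed estimate \eqref{eq1} with $L$ of size $O(1)$ — actually one should be slightly careful and note $N_*(z,1,\cdot)$ is itself $\ll_\epsilon N^{O(\epsilon)}(\frac1{MNy} + \frac1{M^2\sqrt N}+\frac1{M^2N})$, which after multiplying by $\Lambda/M$ and using $y \le N^{-1/2}$, $M^2 \le \Lambda$ is absorbed into $\frac{\Lambda}{M}+\frac{\Lambda^{5/2}}{M^2\sqrt N}$. For $l$ ranging over $\{l_1, l_1l_2, l_1l_2^2, l_1^2l_2^2\}$, I would bound $\sum_{l}\frac{y_l}{\sqrt l}N_*(z,l,N^\epsilon,N;M)$ by applying \eqref{eq1}, \eqref{eq2}, \eqref{eq3} (and their obvious variants for $l_1l_2$, $l_1^2l_2^2$ obtained by the same lattice-point argument) with $L \asymp \Lambda$, keeping in mind $\sqrt l \asymp \Lambda, \Lambda^{3/2}, \Lambda^2$ respectively; for instance the $l_1l_2^2$-piece uses \eqref{eq3} divided by $\sqrt{l_1}\asymp \Lambda^{1/2}$, giving $N^{O(\epsilon)}(\frac{\Lambda}{Ny}+\frac{\Lambda^{5/2}}{M\sqrt N}+\frac{\Lambda^{4}}{MN})$, each term of which is $\le$ RHS of \eqref{amplesum} after using $y\le N^{-1/2}$ and $M \ge 1$. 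Throughout one repeatedly invokes $M^2 \le \Lambda \le N^{O(1)}$ and $y \le N^{-1/2}$ to kill the denominators $Ny$ in favour of $\sqrt N$ or $\Lambda$. For the $N_u$-term one similarly feeds \eqref{eq4}--\eqref{eq7} (divided by the appropriate $\sqrt l$) and checks that $\frac{L}{M}$, $\frac{L^2 y\sqrt N}{M^2}$, $\frac{L^3 y}{M^2}$ type terms with $L\asymp\Lambda$, after the division, land inside $\frac{\Lambda}{M} + \frac{\Lambda^2 y N_0}{M^3} + \frac{\Lambda^{5/2}}{M^2\sqrt N} + \frac{\Lambda^4}{MN}$ — here the term $\frac{\Lambda^2 y N_0}{M^3}$ is present precisely to absorb the parabolic and upper-triangular contributions, and one uses $N_0 \ge 1$, $y\le N^{-1/2}$ freely.

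For the $N_p$-term: by part (1) of Lemma~\ref{l:para}, $N_p$ vanishes unless $l$ is a perfect square, so only $l=1$ and $l = l_1^2 l_2^2$ survive on the amplifier support (and $l=l_1^2l_2^2$ is a square with $\sqrt l = l_1l_2 \asymp \Lambda^2$, $m = l_1l_2 \asymp \Lambda^2$); for $l=1$, $N_p(z,1,\cdot)$ is $O(1)$ (the matrix is $\pm I$) and contributes $\ll \Lambda/M$. For $l=l_1^2l_2^2$ we apply part (2) with $m \asymp \Lambda^2$ (valid since $l, y \le N^{O(1)}$), getting $N_p \ll_\epsilon 1 + N^{O(\epsilon)}(\frac{\Lambda^2 y N_0}{M} + \frac{\Lambda^2 N_0}{N})$; multiplying by $\frac{y_l}{\sqrt l} \asymp \frac{1}{\Lambda^2}$ yields $\ll_\epsilon N^{O(\epsilon)}(\frac{1}{\Lambda^2} + \frac{y N_0}{M} + \frac{N_0}{N})$, and one checks each piece is $\le$ RHS: $\frac{1}{\Lambda^2}\le \frac{\Lambda}{M}$ since $M\le \Lambda^2$ trivially (indeed $M^2\le\Lambda$), $\frac{yN_0}{M}\le \frac{\Lambda^2 y N_0}{M^3}$ since $M^2\le\Lambda^2$, wait — one needs $\frac{yN_0}{M} \le \frac{\Lambda^2 yN_0}{M^3}$ i.e. $M^2 \le \Lambda^2$, true since $M^2\le\Lambda$; and $\frac{N_0}{N} \le \frac{\Lambda}{M}$ holds as $N_0 \le N$ and $M \le \Lambda$. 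Summing the three pieces and collecting gives \eqref{amplesum}. The main obstacle, and the only place real care is needed, is the verification that the cross-over terms — especially those involving $N_0$ and the $Ny$ denominators from the elliptic and parabolic counts — really are dominated by the four claimed terms under the standing hypotheses $M^2\le\Lambda\le N^{O(1)}$, $M\mid N_0$, $y\le N^{-1/2}$; this is a finite but slightly delicate case-check of monomial inequalities in $\Lambda, M, N, N_0, y$, which I would organize by listing each monomial produced and exhibiting the dominating term on the right.
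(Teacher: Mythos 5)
Your proposal follows the paper's proof essentially step for step: split $N(z,l,N^\epsilon,N;M)$ into $N_*+N_u+N_p$, feed in \eqref{eq1}--\eqref{eq7} and Lemma~\ref{l:para}, and check each resulting monomial against the four terms of \eqref{amplesum}. The bookkeeping you describe is the right one and the hypotheses $y\gg M^2/N$, $y\le N^{-1/2}$, $M^2\le\Lambda$, $M\mid N_0$ do close all the cases. Two points, however, need repair. The substantive one is your treatment of the $l=1$ parabolic term: it is false that $N_p(z,1,N^\epsilon,N;M)=O(1)$ because ``the matrix is $\pm I$''. Parabolic matrices of determinant $1$ in $\Delta(1,N;M)$ include all conjugated unipotents $\pm\left(\begin{smallmatrix}1+cdt&d^2t\\-c^2t&1-cdt\end{smallmatrix}\right)$ with $N\mid c^2t$, and such a matrix with $t\neq 0$ can satisfy $u(\gamma z,z)\le N^\epsilon$ whenever $|cz+d|^2$ is small (e.g.\ $N=p^2$, $c=p$, $t=1$, at points with $|pz+d|^2\asymp 1/p$). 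The correct statement is Lemma~\ref{l:para}(2) with $m=1$, giving $\ll_\epsilon 1+N^{O(\epsilon)}\bigl(yN_0/M+N_0/N\bigr)$; multiplied by $y_1=\Lambda/M$ this still lands inside $\frac{\Lambda}{M}+\frac{\Lambda^2yN_0}{M^3}$, so your conclusion survives, but only via the lemma, not via the reason you give.

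A smaller point: \eqref{eq1} is stated only for $L\ge M^2$, so you cannot literally invoke it ``with $L$ of size $O(1)$'' for the $l=1$ general term; apply it with $L=M^2$ instead, which yields $\frac{\Lambda}{M}\bigl(\frac{M}{Ny}+\frac{M}{\sqrt N}+\frac{M^2}{N}\bigr)\ll N^{O(\epsilon)}\frac{\Lambda}{M}$ using $y\gg M^2/N$ and $M^2\le N$. Also, no ``obvious variants'' of \eqref{eq1}--\eqref{eq3} are needed for $l=l_1l_2$ or $l=l_1^2l_2^2$: those estimates are already stated as sums over \emph{all} $l\le L$ (respectively all squares $l^2$ with $l\le L$) congruent to $1\bmod M$, so taking $L\asymp\Lambda^2$ covers products of two primes directly.
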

\begin{proof}The contribution to the LHS of~\eqref{amplesum} from the parabolic matrices is $$\ll_\epsilon \frac{N^{O(\epsilon)} }{M^2}\left(1 + \frac{\Lambda^2 y N_0}{M} + \frac{\Lambda^2\sqrt{N}}{N} \right) \ll_\epsilon N^{O(\epsilon)}  \left(\frac{\Lambda}{M} + \frac{\Lambda^2yN_0}{M^3}+ \frac{\Lambda^{5/2}}{M^2\sqrt{N}} \right)$$

using Lemma~\ref{l:para} and $\Lambda \ge M$.

The contribution to the LHS of~\eqref{amplesum} from the upper-triangular matrices with $l=1$ is $$\ll_\epsilon \frac{N^{O(\epsilon)}  \Lambda}{M} \left(1 + y \frac{\sqrt{N}}{\sqrt{M}} \right) \ll_\epsilon N^{O(\epsilon)}  \left(\frac{\Lambda}{M} \right)$$
using~\eqref{eq7}. For $\Lambda < l < 2\Lambda$, it is $$\ll_\epsilon \frac{N^{O(\epsilon)}  }{\sqrt{\Lambda}} \left(1 + y \frac{\sqrt{\Lambda N}}{M} + \frac{\Lambda y}{M} \right) \ll_\epsilon N^{O(\epsilon)}  \left(\frac{\Lambda}{M} \right)$$
using~\eqref{eq7}. For $\Lambda^2 < l < 4\Lambda^2$, it is $$\ll_\epsilon \frac{N^{O(\epsilon)}  }{\Lambda} \left(\frac{\Lambda}{M} +  \frac{\Lambda^2 y\sqrt{ N}}{M^2} + \frac{\Lambda^3 y}{M^2} \right) \ll_\epsilon N^{O(\epsilon)}  \left(\frac{\Lambda}{M} +  \frac{\Lambda^{5/2}}{M^2\sqrt{N}}\right)$$
using~\eqref{eq4}. For $\Lambda^3 < l < 8\Lambda^3$, it is $$\ll_\epsilon \frac{N^{O(\epsilon)}  }{\Lambda^{3/2}} \left(\frac{\Lambda}{M} +  \frac{\Lambda^{5/2} y\sqrt{ N}}{M^2} + \frac{\Lambda^4 y}{M^2} \right) \ll_\epsilon N^{O(\epsilon)}  \left(\frac{\Lambda}{M} +  \frac{\Lambda^{5/2}}{M^2\sqrt{N}}\right)$$
using~\eqref{eq5}. For $\Lambda^4 < l $, it is $$\ll_\epsilon \frac{N^{O(\epsilon)}  }{\Lambda^{2}} \left( 1 +  \frac{\Lambda^{2} y\sqrt{ N}}{M} + \frac{\Lambda^4 y}{M} \right) \ll_\epsilon N^{O(\epsilon)}  \left(\frac{\Lambda}{M} +  \frac{\Lambda^{5/2}}{M^2\sqrt{N}}\right)$$
using~\eqref{eq6}.

The contribution to the LHS of~\eqref{amplesum} from the general matrices with $l=1$ is $$\ll_\epsilon \frac{N^{O(\epsilon)}  \Lambda}{M} \left(\frac{M}{Ny} +  \frac{M}{\sqrt{N}} +  \frac{M^2}{N}\right) \ll_\epsilon N^{O(\epsilon)}  \left(\frac{\Lambda}{M} \right)$$
using~\eqref{eq1}. For $\Lambda < l < 2\Lambda$, it is $$\ll_\epsilon \frac{N^{O(\epsilon)}  }{\sqrt{\Lambda}} \left(\frac{\Lambda}{MNy} +  \frac{\Lambda^{3/2}}{M^2\sqrt{N}} + \frac{\Lambda^2}{M^2N} \right) \ll_\epsilon N^{O(\epsilon)}  \left(\frac{\Lambda}{M} + \frac{\Lambda^{5/2}}{M^2\sqrt{N}}  \right)$$
using~\eqref{eq1}. For $\Lambda^2 < l < 4\Lambda^2$, it is $$\ll_\epsilon \frac{N^{O(\epsilon)}  }{\Lambda} \left(\frac{\Lambda^2}{MNy} +  \frac{\Lambda^3}{M^2\sqrt{N}} + \frac{\Lambda^4}{M^2N} \right) \ll_\epsilon N^{O(\epsilon)}  \left(\frac{\Lambda}{M} +  \frac{\Lambda^{5/2}}{M^2\sqrt{N}} + \frac{\Lambda^4}{MN}\right)$$
using~\eqref{eq1}. For $\Lambda^3 < l < 8\Lambda^3$, it is $$\ll_\epsilon \frac{N^{O(\epsilon)}  \Lambda}{\Lambda^{3/2} M} \left(\frac{\Lambda^{3/2}}{Ny} +  \frac{\Lambda^3}{M\sqrt{N}} + \frac{\Lambda^{9/2}}{MN} \right) \ll_\epsilon N^{O(\epsilon)}  \left(\frac{\Lambda}{M} +  \frac{\Lambda^{5/2}}{M^2\sqrt{N}} +  \frac{\Lambda^4}{MN}\right)$$
using~\eqref{eq3}. For $\Lambda^4 < l $, it is $$\ll_\epsilon \frac{N^{O(\epsilon)}  }{\Lambda^{2}} \left(  \frac{\Lambda^{2}}{Ny} + \frac{\Lambda^4 }{M\sqrt{N}} + \frac{\Lambda^6}{MN}\right) \ll_\epsilon N^{O(\epsilon)}  \left(\frac{\Lambda}{M} +  \frac{\Lambda^{5/2}}{M^2\sqrt{N}} + \frac{\Lambda^4}{MN}\right)$$
using~\eqref{eq2}.

The proof is complete.
\end{proof}

\begin{remark}\label{r:amplifier}Gergely Harcos and Guillaume Ricotta have pointed out to the author the possibility of using an improved amplifier, as in~\cite{blomer-harcos-milicevic}, in the proof of our main result. With this modification, we would only need to prove a weaker version of the above Proposition, where the terms corresponding to $l=l_1$ or $l_1l_2^2$ are removed.

\end{remark}

\section{Hecke operators on $\Gamma_0(N;M)$}
We begin by recalling the usual Hecke algebra on $\Gamma_0(N)$. Define $$\Delta_0(N):= \left\{ \gamma=\begin{pmatrix}a&b\\c&d\end{pmatrix} \in M_2(\Z) : c \equiv 0 \bmod{N}, \ \det(\gamma)>0 \right\},$$ $$\mathcal{H}_0(N):= \{\sum_{\alpha \in \Delta_0(N)}t_\alpha \Gamma_0(N)\alpha\Gamma_0(N): t_\alpha \in \Z, \ t_\alpha=0 \text{ for almost all }\alpha \}.$$

Next, for any divisor $M$ of $N$, define $$\Delta_0(N;M):= \left\{ \gamma=\begin{pmatrix}a&b\\c&d\end{pmatrix} \in M_2(\Z) : c \equiv 0 \bmod{N}, a \equiv 1 \bmod{M}, \ \det(\gamma)>0 \right\},$$  $$\mathcal{H}_0(N;M):= \{\sum_{\alpha \in \Delta_0(N;M)}t_\alpha \Gamma_0(N;M)\alpha\Gamma_0(N;M): t_\alpha \in \Z, \ t_\alpha=0 \text{ for almost all }\alpha \}.$$

Elements of $\mathcal{H}_0(N)$ (resp. $\mathcal{H}_0(N;M)$) act on Maass cusp forms $f$ on the group $\Gamma_0(N)$ (resp. $\Gamma_0(N;M)$) in the usual manner. We will normalize this action as follows: $$f|\Gamma\alpha\Gamma = \det(\alpha)^{-1/2} \sum_{\gamma \in \Gamma \bs \Gamma\alpha\Gamma}f|\gamma, \qquad \text{ where $\Gamma= \Gamma_0(N)$ or $ \Gamma_0(N;M)$. }$$

Consider the natural map from $\mathcal{H}_0(N;M)$ to $\mathcal{H}_0(N)$ defined via $$\Gamma_0(N;M)\alpha\Gamma_0(N;M) \mapsto \Gamma_0(N)\alpha\Gamma_0(N).$$ Standard arguments (see the remarks above Thm. 4.5.19 of~\cite{Miyake}) imply that this map is an \emph{isomorphism} of Hecke algebras. For $T \in \mathcal{H}_0(N)$, let $T'$ denote its image in $\mathcal{H}_0(N;M)$ under this isomorphism. Then, given any Maass cusp form $f$ on the group $\Gamma_0(N)$, (which therefore can also be thought of as a cusp form on the group $\Gamma_0(N;M)$) and any $T \in \mathcal{H}_0(N)$,  one has the compatibility relation $$f|T = f|T'.$$

For any integer $l \ge 1$, we let $T(l) \in  \mathcal{H}_0(N)$ be the $\Z$-linear span of the double cosets $\Gamma_0(N)\alpha\Gamma_0(N)$ for $\alpha \in \Delta_0(N)$ of determinant $l$. Then, since the above-defined isomorphism of Hecke algebras is determinant-preserving on double cosets, it follows that $T(l)'$ is the $\Z$-linear span of the double cosets $\Gamma_0(N)\alpha\Gamma_0(N)$ where $\alpha \in \Delta_0(N;M)$ has determinant $l$. Recall the definition from the previous section  $$\Delta(l,N;M):=\left\{ \gamma=\begin{pmatrix}a&b\\c&d\end{pmatrix} \in M_2(\Z) : c \equiv 0 \bmod{N}, a \equiv 1 \bmod{M}, \det(\gamma)= l \right\}.$$ The comments above imply that if $f$ is a Maass cusp form on the group $\Gamma_0(N)$ that is an eigenform for the Hecke operator $T(l)$ with (normalized) eigenvalue $\lambda_f(l)$, then \begin{equation}\label{eq:compatible}\sum_{\gamma \in \Gamma_0(N;M)  \bs \Delta(l,N;M) }f|\gamma = l^{1/2} \lambda_f(l) f    \end{equation}

Now, suppose that $M^2$ divides $N$, and $\sigma \in \SL_2(\Z)$ satisfies $C(\sigma) = N/M$. Then, by Proposition~\ref{gammanmcompatibility}, we know that the map $g \mapsto g|\sigma$ is an \emph{endomorphism} of the space of Maass cusp forms for the group $\Gamma_0(N;M)$. It is a natural question if this endomorphism commutes with the Hecke algebra action on the same space. While this is not true in general, it is indeed true for the Hecke operators $T(l)'$ for $l \equiv 1 \pmod{M}$.

\begin{proposition}\label{prop:heckecompatible}Let $M$ be a positive integer such that $M^2$ divides $N$ and let $\sigma \in \SL_2(\Z)$ satisfy $C(\sigma) = N/M$. Let $g$ be a Maass cusp form for the group $\Gamma_0(N;M)$. Then, for any positive integer $l$ such that $l \equiv 1 \pmod{M}$, we have the relation $$g|T(l)'|\sigma = g|\sigma|T(l)'.$$
\end{proposition}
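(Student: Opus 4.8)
The plan is to reduce the statement to an identity of double cosets in the abstract Hecke algebra, exploiting the freedom provided by the hypothesis $l \equiv 1 \pmod M$. First I would work coset-by-coset: it suffices to show that for each $\alpha \in \Delta_0(N;M)$ with $\det(\alpha) = l$, one has $(\Gamma_0(N;M)\alpha\Gamma_0(N;M))|\sigma = \sigma|(\Gamma_0(N;M)\alpha'\Gamma_0(N;M))$ for a suitable rearrangement, or rather that the two sides of the asserted relation produce the same sum over single cosets when applied to $g$. Concretely, fix a decomposition $\Gamma_0(N;M)\alpha\Gamma_0(N;M) = \bigsqcup_j \Gamma_0(N;M)\alpha_j$; then $g|T(l)'|\sigma = l^{-1/2}\sum_j g|\alpha_j|\sigma$, while $g|\sigma|T(l)' = l^{-1/2}\sum_j g|\sigma|\alpha_j$. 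So I must match, up to a permutation, the single cosets $\Gamma_0(N;M)\alpha_j\sigma$ with the single cosets $\Gamma_0(N;M)\sigma\alpha_j$. The key point is that conjugation by $\sigma$ sends the set $\sigma^{-1}\Delta(l,N;M)\sigma$ back into $\Delta(l,N;M)$ when $l \equiv 1 \pmod M$: indeed $\sigma$ has lower-left entry divisible by $N/M$, so by the computation in equation~\eqref{eq:keygammanm} (applied with the middle matrix replaced by a general element of $\Delta(l,N;M)$ — the same linear-algebra identity goes through, now producing determinant $l$ and upper-left entry $\equiv l \equiv 1 \pmod M$), conjugation by $\sigma$ preserves $\Delta_0(N;M)$ on the relevant determinant-$l$ piece.

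The second step is to promote this to a statement about double cosets. By Proposition~\ref{gammanmcompatibility}, $\sigma \Gamma_0(N;M)\sigma^{-1} \subseteq \Gamma_0(N;M)$, so $\sigma^{-1}\Gamma_0(N;M)\sigma \supseteq \Gamma_0(N;M)$; combined with the fact that $\sigma^{-1}$ also satisfies $C(\sigma^{-1}) = N/M$ (since $\sigma^{-1} = \begin{pmatrix} d & -b \\ -c & a\end{pmatrix}$ when $\sigma = \begin{pmatrix} a&b\\c&d\end{pmatrix}$, and the lower-left entry is still divisible by $N/M$), we actually get $\sigma \Gamma_0(N;M)\sigma^{-1} = \Gamma_0(N;M)$, i.e. $\sigma$ \emph{normalizes} $\Gamma_0(N;M)$. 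This is the clean way to package the needed fact and it is cheap given what is already proved. Consequently, for $\alpha \in \Delta(l,N;M)$, the set $\sigma\Gamma_0(N;M)\alpha\Gamma_0(N;M)\sigma^{-1} = \Gamma_0(N;M)(\sigma\alpha\sigma^{-1})\Gamma_0(N;M)$ is again a single double coset, and $\sigma\alpha\sigma^{-1} \in \Delta(l,N;M)$ by the conjugation computation above. Hence right multiplication by $\sigma$ induces a bijection of the double-coset basis $\{\Gamma_0(N;M)\alpha\Gamma_0(N;M) : \alpha \in \Delta(l,N;M)\}$ with itself, $\Gamma_0(N;M)\alpha\Gamma_0(N;M) \mapsto \Gamma_0(N;M)\sigma^{-1}\alpha\sigma\Gamma_0(N;M)$.

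The third step is to observe that $T(l)'$ is precisely the \emph{sum} of all double cosets represented by elements of $\Delta(l,N;M)$ of determinant $l$ — that is, it is the full determinant-$l$ piece, which is $\sigma$-invariant as a formal sum by the bijection just established. Therefore $\sigma^{-1} T(l)' \sigma = T(l)'$ as elements acting on cusp forms for $\Gamma_0(N;M)$, and unwinding the normalized action (the $\det(\alpha)^{-1/2}$ factors are identical on both sides since conjugation preserves determinant, and $f|\gamma_1\gamma_2 = (f|\gamma_1)|\gamma_2$ in our conventions) gives exactly $g|T(l)'|\sigma = g|\sigma|T(l)'$. I expect the main obstacle to be bookkeeping: one must verify carefully that conjugation by $\sigma$ maps a \emph{general} $\alpha \in \Delta(l,N;M)$ — not just an upper-triangular or diagonal representative — into $\Delta(l,N;M)$, tracking both the congruence $c \equiv 0 \pmod N$ (which uses $M^2 \mid N$ exactly as in the proof of Proposition~\ref{gammanmcompatibility}, via the term $c^2 q (N/M^2)$ being integral) and the congruence $a \equiv 1 \pmod M$ (which is where $l \equiv 1 \pmod M$ enters: the new upper-left entry is $\equiv \det(\alpha) = l \pmod M$). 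A secondary subtlety is checking that $T(l)'$ really is the entire span of determinant-$l$ double cosets in $\mathcal{H}_0(N;M)$, which is already asserted in the text preceding the proposition via the determinant-preserving Hecke-algebra isomorphism, so it can be quoted rather than reproved.
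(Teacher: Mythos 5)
Your proposal is correct and follows essentially the same route as the paper: the paper's proof is exactly the reduction to the single-coset formula $g|T(l)' = l^{-1/2}\sum_{\gamma\in\Gamma_0(N;M)\backslash\Delta(l,N;M)}g|\gamma$ followed by the observation that $\sigma\Delta(l,N;M)\sigma^{-1}=\Delta(l,N;M)$, read off from equation~\eqref{eq:keygammanm}; your extra step that $\sigma$ normalizes $\Gamma_0(N;M)$ (via $C(\sigma^{-1})=N/M$) is a correct and worthwhile piece of bookkeeping that the paper leaves implicit. One small correction: the hypothesis $l\equiv 1\pmod{M}$ does not enter through the upper-left entry of the conjugate --- a direct computation shows that entry is congruent to the upper-left entry $a$ of $\alpha$, hence $\equiv 1\pmod{M}$ automatically. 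It is needed instead to ensure that the lower-right entry $d$ of $\alpha\in\Delta(l,N;M)$ satisfies $d\equiv 1\pmod{M}$ (since $ad\equiv l\pmod{M}$), which is what puts $\alpha$ into the form assumed in~\eqref{eq:keygammanm}, or equivalently what keeps the lower-left entry of $\sigma^{-1}\alpha\sigma$ divisible by $N$ (the relevant term being a multiple of $(N/M)(d-a)$).
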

\begin{proof} Recall that for any Maass cusp form $h$ for the group $\Gamma_0(N;M)$, we have $$h|T(l)' = l^{-1/2} \sum_{\gamma \in \Gamma_0(N;M)  \bs \Delta(l,N;M) }h|\gamma.$$ So it suffices to prove that $$\sigma \Delta(l,N;M) \sigma^{-1} = \Delta(l,N;M) .$$ But this follows from equation~\eqref{eq:keygammanm}.
\end{proof}

This gives us the following corollary, which is all that we will use  in the sequel.

\begin{corollary}\label{heckecor}Let $f$ be a Hecke-Maass cuspidal newform for the group $\Gamma_0(N)$. For any $n \ge 1$, let $\lambda_f(n)$ denote the (normalized) $n$th Hecke eigenvalue for $f$. Let $M$ be a positive integer such that $M^2$ divides $N$ , $\sigma \in \SL_2(\Z)$ satisfy $C(\sigma) = N/M$, and $l$ be a positive integer such that $l \equiv 1 \pmod{M}$. Then, if $g:= f| \sigma$, then $$\sum_{\gamma \in \Gamma_0(N;M)  \bs \Delta(l,N;M) }g|\gamma = \lambda_f(l) l^{1/2} g.$$
\end{corollary}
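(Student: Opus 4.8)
The plan is to deduce Corollary~\ref{heckecor} from the machinery already assembled in this section, viewing $f$ simultaneously as a Maass cusp form on $\Gamma_0(N)$ and on the smaller group $\Gamma_0(N;M)$. First I would observe that since $f$ is a Hecke eigenform on $\Gamma_0(N)$, we have $f|T(l) = \lambda_f(l) f$ in the normalization fixed above, and hence by the compatibility relation $f|T = f|T'$ coming from the Hecke-algebra isomorphism $\mathcal{H}_0(N) \cong \mathcal{H}_0(N;M)$, equation~\eqref{eq:compatible} gives
\begin{equation*}\sum_{\gamma \in \Gamma_0(N;M) \bs \Delta(l,N;M)} f|\gamma = l^{1/2}\lambda_f(l) f,\end{equation*}
i.e.\ $f|T(l)' = \lambda_f(l) f$ as a cusp form on $\Gamma_0(N;M)$.

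Next I would apply the slash operator $|\sigma$ to both sides. Since $C(\sigma) = N/M$ and $M^2 \mid N$, Proposition~\ref{gammanmcompatibility} guarantees $\sigma\Gamma_0(N;M)\sigma^{-1} \subseteq \Gamma_0(N;M)$, so $g := f|\sigma$ is again a Maass cusp form for $\Gamma_0(N;M)$, and the expression $g|T(l)'$ makes sense. Applying $|\sigma$ to $f|T(l)' = \lambda_f(l) f$ yields $(f|T(l)')|\sigma = \lambda_f(l)\,(f|\sigma) = \lambda_f(l) g$. Now I invoke Proposition~\ref{prop:heckecompatible} with the cusp form $f$ (regarded on $\Gamma_0(N;M)$) in place of $g$: because $l \equiv 1 \pmod M$, the slash by $\sigma$ commutes with $T(l)'$, so $(f|T(l)')|\sigma = (f|\sigma)|T(l)' = g|T(l)'$. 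Combining, $g|T(l)' = \lambda_f(l) g$.

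Finally I would unwind the definition of the action of $T(l)'$ on $g$: by the normalization $g|T(l)' = l^{-1/2}\sum_{\gamma \in \Gamma_0(N;M)\bs\Delta(l,N;M)} g|\gamma$, so multiplying through by $l^{1/2}$ gives exactly
\begin{equation*}\sum_{\gamma \in \Gamma_0(N;M)\bs\Delta(l,N;M)} g|\gamma = \lambda_f(l)\, l^{1/2}\, g,\end{equation*}
which is the claim. There is no real obstacle here — the corollary is a formal consequence of the three propositions already proved — but the one point deserving care is keeping the two roles of $f$ straight: it is an eigenform for the \emph{full} Hecke algebra on $\Gamma_0(N)$ (which supplies the eigenvalue $\lambda_f(l)$ for \emph{all} $l$), while the commutation of $|\sigma$ with $T(l)'$ only holds for $l \equiv 1 \pmod M$, so the restriction on $l$ in the statement is essential and enters precisely at the application of Proposition~\ref{prop:heckecompatible}. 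One should also check that $g|T(l)'$ is literally the same operator whether one uses the $\Gamma_0(N;M)$-double-coset description of $T(l)'$ or the coset decomposition — this is immediate from the remarks preceding the corollary, since the isomorphism is determinant-preserving on double cosets.
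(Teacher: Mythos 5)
Your proof is correct and is exactly the argument the paper intends: the corollary is obtained by combining equation~\eqref{eq:compatible} (giving $f|T(l)' = \lambda_f(l)f$) with Proposition~\ref{prop:heckecompatible} (commuting $|\sigma$ past $T(l)'$ for $l \equiv 1 \pmod M$) and unwinding the normalization. Your added remarks about the two roles of $f$ and where the congruence condition on $l$ enters are accurate but not needed beyond what the paper's one-line proof already encodes.
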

\begin{proof}This follows by combining~\eqref{eq:compatible} and Proposition~\ref{prop:heckecompatible}.
\end{proof}
\begin{remark}The results of this section continue to hold in the holomorphic  case.
\end{remark}

\begin{remark}The methods and proofs of this section are similar in spirit to those in Section 3.5 of Shimura's book~\cite{MR0314766}.
\end{remark}
\section{The bound via Fourier expansions at width 1 cusps}
We will prove the following Proposition.
 \begin{proposition}\label{prop:fourier}Let $f$ be a Hecke-Maass cuspidal newform for the group $\Gamma_0(N)$ with Laplace eigenvalue $\lambda = 1/4 + r^2$. Let $M \ge 1$ be an integer such that $M^2 |N$ and let $\sigma \in \SL_2(\Z)$ satisfy $C(\sigma)= N/M$. Assume further that $\langle f, f \rangle_{\Gamma_0(N)} =1$ and $|r| \le R$. Then we have:

$$|f(\sigma z)| = |(f|\sigma)(z)| \ll_{R,\epsilon}  N^\epsilon \cdot \begin{cases}\frac{1}{(Ny)^{1/2}}, & \frac1N \le y \le \frac1{M^2}, \\ \frac{M^{1/2}}{N^{1/2}y^{1/4}}, &  \frac1{M^2} \le y .  \end{cases}$$

 \end{proposition}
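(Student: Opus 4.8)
The strategy is the standard one of bounding a Maass cusp form by its Fourier expansion, but executed at the cusp $\sigma$ rather than at $\infty$, which forces us to control the Fourier coefficients of $g := f|\sigma$ at that cusp. First I would recall that $g$ is a Maass cusp form for $\Gamma_0(N;M)$ (by Proposition~\ref{gammanmcompatibility}, applied with the hypothesis $C(\sigma)=N/M$) with the same Laplace eigenvalue $\lambda = 1/4+r^2$, and that since $W(\sigma)=1$ the cusp $\sigma(\infty)$ has width $1$, so $g$ is periodic of period $1$ in $x$. Hence $g$ has a Fourier--Whittaker expansion
\begin{equation*}
g(x+iy) = \sqrt{y}\sum_{n \neq 0} \rho_g(n)\, K_{ir}(2\pi |n| y)\, e(nx),
\end{equation*}
and the basic estimate $|g(z)| \le \sqrt{y}\sum_{n\neq 0}|\rho_g(n)|\, K_{ir}(2\pi|n|y)$ together with the exponential decay of $K_{ir}$ reduces everything to understanding $\sum_{|n| \le Y} |\rho_g(n)|$ for $Y \approx (Ny)^\epsilon / y$ (the usual effective cutoff), which by Cauchy--Schwarz is bounded by $(\#\{|n|\le Y\})^{1/2}(\sum_{|n|\le Y}|\rho_g(n)|^2)^{1/2}$.

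\textbf{Key steps.} The two main inputs are then: (i) an \emph{individual/second-moment bound} for the Fourier coefficients $\rho_g(n)$, and (ii) a bound for the $L^2$-mass of $g$ restricted to a horocycle, coming from the fact that $\langle g,g\rangle_{\Gamma_0(N;M)} = \langle f,f\rangle_{\Gamma_0(N)} \cdot [\Gamma_0(N):\Gamma_0(N;M)] \ll N^\epsilon M^2$ (the index is $\phi(M)\cdot(\text{something})$, in any case $\ll_\epsilon N^\epsilon M^2$), combined with the unfolding identity $\int_0^1 |g(x+iy)|^2\,dx = y\sum_{n\neq 0}|\rho_g(n)|^2 K_{ir}(2\pi|n|y)^2$. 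For (i) the crucial point — and the arithmetic heart of the proposition — is that $g=f|\sigma$ is an eigenfunction of the Hecke operators $T(l)'$ for all $l\equiv 1\pmod M$ with eigenvalue $\lambda_f(l)$ (Corollary~\ref{heckecor}); writing out the action of such a Hecke operator on the Fourier expansion at the width-$1$ cusp gives Hecke-type recursions relating $\rho_g(ln)$ to $\rho_g(n)$ for $(l,\cdot)$ in the arithmetic progression $1\bmod M$. Using these recursions, Deligne-type (or Kim--Sarnak) bounds $|\lambda_f(l)|\ll l^\epsilon$ on the progression, and an amplification/large-sieve-type average over such $l$, one shows $\sum_{|n|\le Y}|\rho_g(n)|^2 \ll_\epsilon N^\epsilon\, Y \cdot (\text{the horocycle }L^2\text{ mass at a suitable height})$, so that effectively $\sum_{|n|\le Y}|\rho_g(n)|^2$ is controlled by $\langle g,g\rangle_{\Gamma_0(N;M)}/(\text{normalizing volume})$ up to $Y$ and $N^\epsilon$ factors. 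Plugging these into the Cauchy--Schwarz bound, and optimizing the horocycle height $y' \ge y$ at which one evaluates the $L^2$-mass (one can push $g$ up to height $\sim 1$ using the width-$1$ structure, picking up the gain), yields
\begin{equation*}
|g(x+iy)| \ll_{R,\epsilon} N^\epsilon \Bigl( \tfrac{1}{Ny}\Bigr)^{1/2} \quad\text{for } \tfrac1N \le y \le \tfrac1{M^2},
\end{equation*}
while for $y \ge 1/M^2$ one instead balances the trivial pointwise sup bound on the compact part against the Fourier-tail contribution, which produces the $M^{1/2} N^{-1/2} y^{-1/4}$ shape (the $y^{-1/4}$ being the familiar exponent from the Bessel-function mean value $\int K_{ir}(t)^2\,\tfrac{dt}{t}$ localization).

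\textbf{Main obstacle.} The routine parts are the Bessel asymptotics and the Cauchy--Schwarz bookkeeping; the genuinely delicate step is establishing the second-moment bound for $\rho_g(n)$ with the right dependence on $M$ and $N$. Because we can only amplify over $l\equiv 1\pmod M$, the usual multiplicativity of Fourier coefficients is only available along that progression, and one must show that this restricted Hecke structure still suffices to convert the global $L^2$-normalization into pointwise control — in particular that the loss from the larger volume of $\Gamma_0(N;M)\backslash\H$ is exactly offset by the $M^2$ gained. Concretely, the hard work is: (a) carefully relating $\rho_g(n)$ for general $n$ to coefficients $\rho_g(n_0)$ with $n_0 \mid (MN)^\infty$-part removed via the progression-restricted Hecke relations, and (b) bounding the "bad" coefficients $\rho_g(n_0)$ with $n_0 \mid (MN)^\infty$ directly, using the local structure of the newform $f$ at the ramified primes (i.e. a local computation of ramified Whittaker newvectors, or an appeal to known bounds such as those of Hoffstein--Lockhart / the Ramanujan bound on average). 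I expect step (b) — controlling the coefficients supported on the "square-full" part of $MN$ — to be where most of the technical effort goes, since these are precisely the coefficients invisible to the available amplifier.
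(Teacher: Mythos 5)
Your overall architecture -- Fourier expansion at the width-one cusp $\sigma(\infty)$, truncation via Bessel decay, Cauchy--Schwarz, and a second-moment bound for the coefficients $\rho_\sigma(n)$ -- matches the paper's. But the step you yourself flag as the hard one, namely controlling the coefficients $\rho_\sigma(n)$ for $n$ with large $N$-part, is exactly where the entire content of the proposition lives, and your proposal does not supply it. The paper's mechanism is an exact adelic factorization $|\rho_\sigma(n)| = |\lambda_{\sigma,N}(n)\,\rho(1)\,\lambda_f(n/(n,N^\infty))|$, where $\lambda_{\sigma,N}(n)$ is a product over $p \mid N$ of values of the local Whittaker newform at explicit matrices built from $\sigma$; the key quantitative input is the local bound $\lambda_{[N/M],p}(n_1) \ll (n_1,p^\infty)^{1/4}$ imported from the explicit computations in~\cite{NPS}, which yields $\sum_{|n|\le X}|\lambda_{\sigma,N}(n)|^2 \ll_\epsilon (NX)^\epsilon(X+M\sqrt X)$ (the $M\sqrt X$ term is precisely what produces the second regime $M^{1/2}N^{-1/2}y^{-1/4}$; no separate ``balancing against a trivial sup bound'' is involved). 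Of the fallbacks you list, only ``a local computation of ramified Whittaker newvectors'' is on target; Hoffstein--Lockhart controls $|\rho(1)|$ at $\infty$ (and is indeed used for that), and ``Ramanujan on average'' says nothing about these non-multiplicative coefficients at other cusps. Without the local Whittaker estimate, the proof does not close.

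Two further points on your route for the ``generic'' coefficients. First, the index is $[\Gamma_0(N):\Gamma_0(N;M)] = \phi(M)/(M,2) \asymp M$, not $\asymp M^2$ (from $ad\equiv 1 \bmod M$ the condition on $d$ is forced by that on $a$); using $M^2$ would cost you a factor $M^{1/2}$ in the final bound. Second, the Hecke-relations-on-the-progression plus horocycle-$L^2$ argument is a genuinely different (and not obviously workable) substitute for the paper's factorization: to convert $\int_0^1|g(x+iy)|^2\,dx$ at height $y\approx 1/Y$ into a bound on $\sum_{|n|\le Y}|\rho_\sigma(n)|^2$ with the needed saving of $1/N$, you must control the multiplicity with which the strip $[0,1]\times[Y^{-1},2Y^{-1}]$ covers $\Gamma_0(N;M)\backslash\H$, and a naive covering bound does not give ``area divided by volume''. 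The paper sidesteps all of this: the unramified part of $\rho_\sigma(n)$ is literally $\lambda_f$ of the $N$-free part of $n$, so its second moment follows from the Rankin--Selberg bound at the cusp $\infty$, with no horocycle argument needed.
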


The proof will follow from a careful analysis of the Fourier expansion at the cusp $\sigma(\infty)$. Let $f$ be as in the Proposition. Then $f$ has the usual Fourier expansion at $\infty$,
$$f(z)= y^{1/2} \sum_{n \ne 0} \rho(n)K_{ir}(2 \pi |n| y) e(nx).$$ We have the equation $|\lambda_f(|n|)| = |\rho(n)/\rho(1)|$, where  for each $l \ge 1$, $\lambda_f(l)$ denotes the (normalized) $l$th Hecke eigenvalue for $f$. Let $\sigma \in \SL_2(\Z)$ satisfy $C(\sigma)= N/M$ (so $W(\sigma)=1$) and let $h = f|\sigma$. Then
  $h$ is a Maass cusp form for the congruence subgroup $\Gamma_0(N;M)$. It has a Fourier expansion $$h(z) =y^{1/2} \sum_{n \ne 0} \rho_\sigma(n)K_{ir}(2 \pi |n| y) e(nx).$$ The coefficients $\rho_\sigma(n)$ are the Fourier coefficients of $f$ at the cusp $\sigma(\infty);$ unlike the coefficients at infinity, these cannot be understood simply in terms of Hecke eigenvalues (in fact, they are not even multiplicative). These coefficients were studied adelically in~\cite[Sec. 3.4.2]{NPS}, and we will use some calculations from there in what follows.\footnote{In~\cite[Sec. 3.4.2]{NPS}, we restricted ourselves to the holomorphic case but this does not matter because we will only use some local non-archimedean calculations from there which are the same for Maass and holomorphic forms.}

 The adelization of the form $f$ gives rise to a cuspidal automorphic representation $\pi=\otimes_{p\le \infty} \pi_p$ of $\GL_2(\A)$. Let $W= \prod_{p\le \infty}W_p$ be the global Whittaker newform in $\pi$ (with respect to the standard additive character $\psi = \prod_{p\le \infty}\psi_p$), where we normalize at the non-archimedean places so that $W_p(1)=1$ for all finite primes $p$. Fix an integer $a$ such that the cusp $\sigma(\infty)$ contains a representative of the form $\frac{a}{N/M}$ with $\gcd(a,N) =1$. For each integer $n$, define

 $$\lambda_{\sigma, N}(n) = \prod_{p|N}\left( (n, p^\infty)^{1/2} W_p\left(\begin{pmatrix}nM^2/N^2&0\\0&1 \end{pmatrix}\begin{pmatrix}0&1\\-1&0\end{pmatrix}
 \begin{pmatrix}1 & -aM/N\\0&1 \end{pmatrix} \right) \right).$$

 Using the usual adelic intepretation of Fourier coefficients as Whittaker functions, one observes (see the discussion following~\cite[(48)]{NPS}) that\footnote{A comparison with~\cite{NPS} reveals a conflict between~\eqref{e:fourierfactor} and~\cite[(49)]{NPS}. This reflects a typo in \cite{NPS}; the version stated here is correct.}

 \begin{equation}\label{e:fourierfactor}|\rho_\sigma(n)| = \left|\lambda_{\sigma, N}(n)\rho(1) \lambda_f\left( \frac{|n| }{(|n|, N^\infty) } \right)  \right|,\end{equation}

\begin{lemma}\label{l:firstbound}For all $X >0$, we have $$\sum_{0\le |n| \le X}|\lambda_{\sigma, N}(n)|^2 \ll_{\epsilon} (NX)^\epsilon (X + M\sqrt{X}).$$

\end{lemma}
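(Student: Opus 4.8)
The plan is to reduce the estimate to a bound on each local factor $\lambda_{\sigma,N}(n)$ and then combine these multiplicatively. Since $\lambda_{\sigma,N}(n)$ is, by definition, a product over primes $p \mid N$ of quantities depending only on $v_p(n)$ (through $(n,p^\infty)^{1/2}$ and the value of the local Whittaker newform $W_p$ at an explicit matrix whose entries depend on $n$ only via $nM^2/N^2$, i.e. essentially via $v_p(n)$), I would first record the key fact that $|\lambda_{\sigma,N}(n)|$ depends only on the tuple $(v_p(n))_{p\mid N}$, and is bounded by a constant (depending on $\pi$, hence on $R$) for each fixed such tuple. The real input is a \emph{uniform} bound: there is an exponent, coming from the support and growth of the local Whittaker newform $W_p$ of a (possibly ramified) representation $\pi_p$ of $\GL_2(\Q_p)$, controlling $(n,p^\infty)^{1/2}|W_p(\cdots)|$ in terms of $v_p(n)$. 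The crucial point — this is where the cusp of width $1$ enters, and where the local computations of \cite[Sec.~3.4.2]{NPS} are used — is that the relevant Whittaker values vanish unless $v_p(n)$ is small (bounded in terms of $n_p$ and $m_p$), and are bounded polynomially when they do not vanish. I would isolate this as a preliminary lemma: $|\lambda_{\sigma,N}(n)| \ll_{R,\epsilon} (n,N^\infty)^{1/2} (nN)^\epsilon$ together with a restriction on the support, and more precisely that $\sum_{(n,N^\infty)=D}|\lambda_{\sigma,N}(n)|^2$ over $n$ with $|n|\le X$ is controlled.

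Granting such local bounds, the global estimate becomes a counting problem. Write $n = D n'$ where $D = (n,N^\infty)$ and $(n',N)=1$; then $|\lambda_{\sigma,N}(n)|$ depends only on $D$ (in fact only on $D$ up to the support constraint), while $n'$ ranges freely over integers coprime to $N$ with $|n'| \le X/D$. The sum $\sum_{0 \le |n| \le X} |\lambda_{\sigma,N}(n)|^2$ then factors as roughly $\sum_{D \mid N^\infty} |\lambda_{\sigma,N}(D)|^2 \cdot \#\{n' : |n'| \le X/D\} \ll_\epsilon N^\epsilon \sum_{D \mid N^\infty} |\lambda_{\sigma,N}(D)|^2 \, (X/D)$. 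So everything reduces to showing $\sum_{D \mid N^\infty} |\lambda_{\sigma,N}(D)|^2 / D \ll_\epsilon N^\epsilon (1 + M/\sqrt{X}\cdot\text{something})$ — but a moment's thought shows the bound we want, $X + M\sqrt X$, suggests that for the bulk of $D$ (those with $|\lambda_{\sigma,N}(D)|^2 \le D \cdot N^\epsilon$) one gets the main term $X N^\epsilon$, whereas the secondary term $M\sqrt X$ comes from the finitely many large $D$ (of size up to about $M^2$, reflecting $D \approx (N/M)^2 / (\text{conductor stuff})$) where the Whittaker newform is genuinely large — of size $\asymp D^{1/2}$ from the $(n,p^\infty)^{1/2}$ factor but supported on $D \ll M^2$ — contributing $\sum_{D \ll M^2, D\mid N^\infty} 1 \cdot \sqrt{X} \ll_\epsilon N^\epsilon M \sqrt X$ once one uses the standard fact (quoted in the proof of Lemma~\ref{l:para}, via Rankin's trick) that $\#\{D \le Y : D \mid N^\infty\} \ll_\epsilon (NY)^\epsilon$, together with the fact that the largest such $D$ is $\ll M^2$.

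Concretely, the steps I would carry out are: (1) state and prove the local lemma giving, for each $p \mid N$, both the support restriction on $v_p(n)$ and the polynomial bound $(n,p^\infty)^{1/2}|W_p(\cdots)| \ll_{R,\epsilon} (n,p^\infty)^{1/2}$ with the extra gain that the product over all $p\mid N$ of the supports forces $(n,N^\infty) \ll M^2 \cdot (N^\infty\text{-stuff})$ — here I would directly cite the Whittaker newform computations of \cite[Sec.~3.4.2]{NPS} and the fact that $C(\sigma) = N/M$; (2) split the $n$-sum according to $D = (n,N^\infty)$; (3) bound the inner sum over $n'$ coprime to $N$ trivially by $X/D + 1$; (4) bound $\sum_{D\mid N^\infty} |\lambda_{\sigma,N}(D)|^2 (X/D+1)$ by separating $D$ small (contributing $\ll_\epsilon X N^\epsilon$, using $|\lambda_{\sigma,N}(D)|^2 \ll D \cdot N^\epsilon$ on average) from $D$ up to $\ll M^2$ (contributing $\ll_\epsilon M\sqrt X N^\epsilon$, using the divisor-type bound on $\#\{D\mid N^\infty, D\le M^2\}$ and $X/D \le X$, $\sqrt{X/D}\cdot\sqrt D = \sqrt X$). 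I expect the main obstacle to be step (1): getting the \emph{uniform in $N$} bound on the local Whittaker newforms and, in particular, correctly pinning down that the support condition localizes the size-$D^{1/2}$ contributions to $D \ll M^2$ — this requires carefully unwinding the matrix $\begin{pmatrix}nM^2/N^2&0\\0&1\end{pmatrix}\begin{pmatrix}0&1\\-1&0\end{pmatrix}\begin{pmatrix}1&-aM/N\\0&1\end{pmatrix}$ appearing in the definition of $\lambda_{\sigma,N}(n)$ prime-by-prime and matching it against the known formulas for Whittaker newforms of principal series, Steinberg, and supercuspidal type. Once the local picture is clean, the global assembly is a routine application of the Rankin-trick divisor bound already invoked elsewhere in the paper.
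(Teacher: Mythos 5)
There is a genuine gap, and it sits exactly at the step you flag as the main obstacle. Your reduction rests on the claim that $|\lambda_{\sigma,N}(n)|$ depends only on the tuple $(v_p(n))_{p\mid N}$, i.e.\ only on $D=(n,N^\infty)$, so that the sum factors as $\sum_{D\mid N^\infty}|\lambda_{\sigma,N}(D)|^2\cdot\#\{n'\}$. This is false. Because of the translate $n(-aM/N)$ sitting to the right of the Weyl element in the definition of $\lambda_{\sigma,N}$, the unit part of $n$ at each $p\mid N$ cannot be conjugated away into $K_0(p^{n_p})$: writing $n=n_0n_1$ with $n_1=(n,N^\infty)$, the value $|\lambda_{\sigma,N}(n_0n_1)|$ genuinely depends on the residue class of $n_0$ modulo $M$ (for ramified $\pi_p$ the map $u\mapsto W_p\bigl(\left(\begin{smallmatrix}up^r&0\\0&1\end{smallmatrix}\right)w\,n(-ap^{m_p-n_p})\bigr)$ oscillates like a Gauss sum in $u$). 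The invariance one can prove is precisely invariance under $n_0\mapsto n_0'$ with $n_0\equiv n_0'\bmod M$, and the input available from \cite[Sec.~3.4.2]{NPS} is a \emph{mean-square} bound over these residue classes, $\frac{1}{\phi(M)}\sum_{n_0\bmod M}|\lambda_{\sigma,N}(n_0n_1)|^2\ll n_1^{1/2}$, obtained by recognizing the average over $n_0\bmod N$ as a product of local integrals $\int_{\Z_p^\times}|W_p(\cdots)|^2\,du$ — not a pointwise bound on $|\lambda_{\sigma,N}(n)|$ for individual $n$. Your steps (2)–(3), which treat the coefficient as constant on each class $D=(n,N^\infty)$ and then count $n'$ trivially, therefore do not go through; one has to sum $|n_0|$ over blocks of $M$ consecutive reduced residues so that the average bound applies.

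Relatedly, your accounting for the secondary term $M\sqrt{X}$ is not correct: there is no support restriction forcing $(n,N^\infty)\ll M^2$, and the Whittaker values need not vanish for large $v_p(n)$ (think of the unramified or Steinberg components). In the actual argument the term $M\sqrt{X}$ arises from the ``rounding up'' in the count of $n_0$: for each $n_1\mid N^\infty$ with $X/M<n_1\le X$ there are fewer than $M$ admissible $n_0$, but to invoke the mean-square bound one completes them to a full residue system mod $M$, paying $\ll Mn_1^{1/2}\le M\sqrt{X}$ per such $n_1$; the Rankin-trick bound $\#\{n_1\le X:\ n_1\mid N^\infty\}\ll_\epsilon(NX)^\epsilon$ (which you do correctly invoke) then finishes. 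So while your overall skeleton — split by $(n,N^\infty)$, use the local computations of \cite{NPS}, control the number of $N$-full divisors by Rankin's trick — is the right one, the missing idea is the averaging over residue classes mod $M$, and without it the claimed pointwise structure of $\lambda_{\sigma,N}$ on which the rest of your argument depends is simply not there.
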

\begin{proof} The proof is rather involved. Write
$n=n_0n_1$ where $n_1 := (n, N^\infty)$. Let us first show that\footnote{This fact was implicitly proved in~\cite{NPS} but we give a proof here for completeness.}  \begin{equation}\label{whitinv}|\lambda_{\sigma, N}(n_0n_1)| = |\lambda_{\sigma, N}(n_0'n_1)| \qquad \text{ if } (n_0, N) = (n_0',N) = 1, \ n_0 \equiv n_0' \pmod{M}.\end{equation} Indeed, to prove~\eqref{whitinv}, it suffices to show that for each $p|N$, $r \in \Z$, and $u_i \in \Z_p^\times$ with $u_1 \equiv u_2 \pmod{p^{m_p}}$, one has $$ \left|W_p\left(\begin{pmatrix}u_1 p^{r}&0\\0&1 \end{pmatrix}\begin{pmatrix}0&1\\-1&0\end{pmatrix}
 \begin{pmatrix}1 & -ap^{m_p-n_p}\\0&1 \end{pmatrix} \right) \right|=  \left|W_p\left(\begin{pmatrix}u_2 p^{r}&0\\0&1 \end{pmatrix}\begin{pmatrix}0&1\\-1&0\end{pmatrix}
 \begin{pmatrix}1 & -ap^{m_p-n_p}\\0&1 \end{pmatrix} \right)\right| .$$ Put $$\nu= \begin{pmatrix}1&p^{r-m_p+n_p}(u_1 - u_2)a^{-1}\\0&1\end{pmatrix}, \quad k=  \begin{pmatrix}u_1/u_2 &0\\p^{-m_p+n_p}(u_2 - u_1)a^{-1} &u_2/u_1 \end{pmatrix}.$$ Note that $k\in K_0(p^{n_p})$. We can check that $$\nu\begin{pmatrix} p^{r}&0\\0&1 \end{pmatrix}\begin{pmatrix}0&1\\-1&0\end{pmatrix}
 \begin{pmatrix}1 & -au_1^{-1}p^{m_p-n_p}\\0&1 \end{pmatrix} = \begin{pmatrix} p^{r}&0\\0&1 \end{pmatrix}\begin{pmatrix}0&1\\-1&0\end{pmatrix}
 \begin{pmatrix}1 & -au_2^{-1}p^{m_p-n_p}\\0&1 \end{pmatrix} k.$$Now~\eqref{whitinv} follows from the following calculation: \begin{align*}W_p\left(\begin{pmatrix}u_1 p^{r}&0\\0&1 \end{pmatrix}\begin{pmatrix}0&1\\-1&0\end{pmatrix}
 \begin{pmatrix}1 & -ap^{m_p-n_p}\\0&1 \end{pmatrix} \right) &= W_p\left(\begin{pmatrix} p^{r}&0\\0&1 \end{pmatrix}\begin{pmatrix}0&1\\-1&0\end{pmatrix}
 \begin{pmatrix}1 & -au_1^{-1}p^{m_p-n_p}\\0&1 \end{pmatrix} \right) \\&= \epsilon  W_p\left(\nu\begin{pmatrix} p^{r}&0\\0&1 \end{pmatrix}\begin{pmatrix}0&1\\-1&0\end{pmatrix}
 \begin{pmatrix}1 & -au_1^{-1}p^{m_p-n_p}\\0&1 \end{pmatrix} \right) \\&= \epsilon  W_p\left(\begin{pmatrix} p^{r}&0\\0&1 \end{pmatrix}\begin{pmatrix}0&1\\-1&0\end{pmatrix}
 \begin{pmatrix}1 & -au_2^{-1}p^{m_p-n_p}\\0&1 \end{pmatrix} k \right) \\ &=\epsilon  W_p\left(\begin{pmatrix}u_2 p^{r}&0\\0&1 \end{pmatrix}\begin{pmatrix}0&1\\-1&0\end{pmatrix}
 \begin{pmatrix}1 & -ap^{m_p-n_p}\\0&1 \end{pmatrix} \right)
 \end{align*}
where $\epsilon = \psi_p(p^{r-m_p+n_p}(u_1 - u_2)a^{-1}) \in S^{1}$.

So, for each $n_1 |N^\infty$, we can define the quantity $$ \lambda_{[N/M], N}(n_1):=   \left(\frac{1}{\phi(M)}\sum_{\substack{n_0 \bmod{M} \\ (n_0, M) =(n_0, N)=1}} |\lambda_{\sigma, N}(n_0n_1)|^2\right)^{1/2},$$ where $\sum_{\substack{n_0 \bmod{M} \\ (n_0, M) = (n_0, N)=1}}$ means that the sum is taken over any set of integers $n_0$ which form a reduced residue system modulo $M$ and such that each $n_0$ is coprime to $N$ (e.g., if $M=5$, $N=50$, we can sum over the elements $1,3,7,9$).

Next, note that \begin{align*}\lambda_{[N/M], N}(n_1)^2&=   \frac{1}{\phi(M)}\sum_{\substack{n_0 \bmod{M} \\ (n_0, M) =(n_0, N)=1}} |\lambda_{\sigma, N}(n_0n_1)|^2\\&=   \frac{1}{\phi(N)}\sum_{\substack{n_0 \bmod{N} \\ (n_0, N)=1}} |\lambda_{\sigma, N}(n_0n_1)|^2\\&=   \frac{n_1^{1/2}}{\phi(N)}\sum_{\substack{n_0 \bmod{N} \\ (n_0, N)=1}} \prod_{p|N} \left|W_p\left(\begin{pmatrix}n_0n_1M^2/N^2&0\\0&1 \end{pmatrix}\begin{pmatrix}0&1\\-1&0\end{pmatrix}
 \begin{pmatrix}1 & -aM/N\\0&1 \end{pmatrix} \right) \right|^2\\&=   \frac{n_1^{1/2}}{\phi(N)}\sum_{\substack{n_0 \bmod{N} \\ (n_0, N)=1}} \prod_{p|N} \left|W_p\left(\begin{pmatrix}-a^{-1}n_0n_1M^2/N^2&0\\0&1 \end{pmatrix}\begin{pmatrix}0&1\\-1&0\end{pmatrix}
 \begin{pmatrix}1 & M/N\\0&1 \end{pmatrix} \right) \right|^2\\&= \prod_{p|N} (n_1, p^\infty)\int_{n_0 \in \Z_p^\times} \left|W_p\left(\begin{pmatrix}n_0n_1M^2/N^2&0\\0&1 \end{pmatrix}\begin{pmatrix}0&1\\-1&0\end{pmatrix}
 \begin{pmatrix}1 & M/N\\0&1 \end{pmatrix} \right) \right|^2
 \end{align*}

 where the last step follows from the invariance properties of $W_p$ and an application of the Chinese Remainder Theorem.

   This shows that the quantity $\lambda_{[N/M], N}(n_1)$ factors as $$\lambda_{[N/M], N}(n_1) = \prod_{p|N} \lambda_{[N/M], p}(n_1),$$ where for each integer $n_1$, each prime $p|N$, and each integer $c$ such that $c|N|c^2$, the function $\lambda_{[c], p}(n_1)$ is defined as follows, $$ \lambda_{[c], p}(n_1)=   (n_1, p^\infty)^{1/2}\left( \int_{\Z_p^\times} \left| W_p\left(\begin{pmatrix}un_1/c^2&0\\0&1 \end{pmatrix}\begin{pmatrix}0&1\\-1&0\end{pmatrix}
 \begin{pmatrix}1 & 1/c\\0&1 \end{pmatrix} \right)\right|^2 du \right)^{1/2},$$  These local functions $ \lambda_{[c], p}(n_1)$ were studied in detail in~\cite{NPS}, and completely explicit (and remarkably simple) expressions for them were proved. For the purposes of this lemma, we only need the following bound, which follows from~\cite[Prop. 3.12 (b)]{NPS} and~\cite[Cor. 3.13]{NPS}:
$$\lambda_{[N/M], p}(n_1) \ll (n_1, p^\infty)^{1/4} .$$ This gives us $\lambda_{[N/M], N}(n_1) \ll n_1^{1/4} $
 which implies that for each $n_1 |N^\infty$, we have the bound

$$\sum_{\substack{n_0 \bmod{M} \\ (n_0, M) = (n_0, N)=1}} |\lambda_{\sigma, N}(n_0n_1)|^2 \ll Mn_1^{1/2}.$$

In particular, if $Y$ is any positive \emph{integer}, we deduce (by completing the residue classes) that \begin{equation}\label{keylocalfour}\sum_{\substack{1 \le |n_0| \le M Y \\ (n_0, N)=1}} |\lambda_{\sigma, N}(n_0n_1)|^2 \ll YMn_1^{1/2}.\end{equation}

Hence \begin{align*}\sum_{1\le |n| \le X}|\lambda_{\sigma, N}(n)|^2 &=  \sum_{\substack{1 \le n_1 \le X \\ n_1 |N^\infty}} \sum_{\substack{1 \le |n_0| \le \frac{X}{n_1} \\ (n_0, N)=1}} |\lambda_{\sigma, N}(n_0n_1)|^2 \\ &\le \sum_{\substack{1 \le n_1 \le X \\ n_1 |N^\infty}} \sum_{\substack{1 \le |n_0| \le M \left \lceil \frac{X}{Mn_1} \right \rceil \\ (n_0, N)=1}} |\lambda_{\sigma, N}(n_0n_1)|^2 \\ &\ll \sum_{\substack{1 \le n_1 \le X \\ n_1 |N^\infty}} \left\lceil \frac{X}{Mn_1} \right \rceil M n_1^{1/2}, \qquad \text{ using }\eqref{keylocalfour} \\ &\le \sum_{\substack{1 \le n_1 \le X \\ n_1 |N^\infty}} \left( \frac{X}{Mn_1} + 1 \right)  M n_1^{1/2} \\ &\le \sum_{\substack{1 \le n_1 \le X \\ n_1 |N^\infty}}  (X + M\sqrt{X})   \\ & \ll_{\epsilon} (NX)^\epsilon (X + M\sqrt{X}),
\end{align*}
as required. In the last step above, we have used the fact that there are $\ll_\epsilon (NX)^\epsilon$ integers $n_1$ satisfying $1 \le n_1 \le X, \ n_1 |N^\infty$.
\end{proof}

\begin{remark}It is possible that the error term $M\sqrt{X}$ might be sharpened with more delicate analysis. However, doing so will not lead to any improvement in our main Theorem, and so we do not attempt to do it.

\end{remark}

\begin{lemma}\label{l:secondbound}Suppose $y \ge 1/N$. We have the bound $$\sum_{1\le n \le X} \left|\lambda_f\left( \frac{|n| }{(|n|, N^\infty) } \right)K_{ir}(2 \pi |n| y) \right|^2 \ll_{R,\epsilon} X^{1-2\Im(r)} y^{-2\Im(r)} (NX)^\epsilon.$$

\end{lemma}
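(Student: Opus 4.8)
The plan is to bound the sum by pulling out the $y$-dependence of the Bessel factor, reducing to a weighted second moment of the Hecke eigenvalues of $f$ along integers coprime to $N$, and then applying the Rankin--Selberg estimate. We may assume $X\ge 1$. For each $n\ge 1$ write $n=n_0n_1$ with $n_1=(n,N^\infty)$ and $(n_0,N)=1$; the $n$-th summand is then exactly $|\lambda_f(n_0)|^2\,|K_{ir}(2\pi ny)|^2$.

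First I would record the standard uniform bound $|K_{ir}(u)|^2\ll_{R,\epsilon}u^{-2\Im(r)-\epsilon}$, valid for all $u>0$ and all $|r|\le R$: for $0<u\le 1$ it follows from $|K_{ir}(u)|\ll_R u^{-\Im(r)}(1+|\log u|)$ together with $\Im(r)\ge 0$, and for $u\ge 1$ it follows from the exponential decay of $K_{ir}$, which decays faster than any fixed negative power of $u$ (recall $\Im(r)\le 7/64<1/2$ by the Kim--Sarnak bound toward Selberg's conjecture, so $-2\Im(r)-\epsilon$ is a fixed negative exponent). Applying this with $u=2\pi ny$ and using $(2\pi n y)^{-2\Im(r)-\epsilon}\ll_R y^{-2\Im(r)-\epsilon}n^{-2\Im(r)-\epsilon}$ gives
\[
\sum_{1\le n\le X}\Big|\lambda_f\!\Big(\tfrac{|n|}{(|n|,N^\infty)}\Big)K_{ir}(2\pi ny)\Big|^2\ \ll_{R,\epsilon}\ y^{-2\Im(r)-\epsilon}\sum_{1\le n\le X}|\lambda_f(n_0)|^2\,n^{-2\Im(r)-\epsilon}.
\]

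It then remains to bound the Dirichlet sum on the right, with $\beta:=2\Im(r)+\epsilon\in[0,1/2)$. I would split it according to $n_1\mid N^\infty$ as $\sum_{n_1\mid N^\infty,\ n_1\le X}n_1^{-\beta}\sum_{n_0\le X/n_1,\ (n_0,N)=1}|\lambda_f(n_0)|^2n_0^{-\beta}$, and for the inner sum combine the level-uniform Rankin--Selberg mean value $\sum_{m\le Y}|\lambda_f(m)|^2\ll_{R,\epsilon}(NY)^\epsilon Y$ with partial summation (here $1-\beta$ is bounded below by an absolute constant) to obtain $\ll_{R,\epsilon}(NX)^\epsilon (X/n_1)^{1-\beta}$. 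Summing over $n_1$, the $n_1$-powers combine to $\sum_{n_1\mid N^\infty}n_1^{-1}=\prod_{p\mid N}(1-p^{-1})^{-1}\ll_\epsilon N^\epsilon$, so the whole Dirichlet sum is $\ll_{R,\epsilon}(NX)^{O(\epsilon)}X^{1-\beta}\le (NX)^{O(\epsilon)}X^{1-2\Im(r)}$. Substituting back and using $y\ge 1/N$ to absorb the stray factor $y^{-\epsilon}$ into $N^\epsilon$ yields the asserted bound. The only genuinely non-elementary input is the Rankin--Selberg second-moment estimate, which is standard and uniform in the level; the mildly delicate technical point is the uniformity in $r$ of the Bessel bound near $r=0$ (where the two leading terms of the small-$u$ expansion must be combined to see the cancellation) and in the transitional range $u\asymp|r|$, but this is classical (cf.\ the Bessel estimates in~\cite{iwan-sar-85,harcos-templier-1}).
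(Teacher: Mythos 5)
Your proposal is correct and follows essentially the same route as the paper: bound the Bessel factor by $u^{-\Im(r)-\epsilon}$, split $n=n_0n_1$ with $n_1=(n,N^\infty)$, apply the level-uniform Rankin--Selberg second moment with partial summation to the $n_0$-sum, and control the $n_1$-sum using $n_1\mid N^\infty$ (you via the Euler product $\prod_{p\mid N}(1-p^{-1})^{-1}$, the paper via Rankin's trick, a cosmetic difference). Your extra care about the uniformity of the Bessel bound near $r=0$ and about $\Im(r)$ being bounded away from $1/2$ is a reasonable elaboration of points the paper leaves implicit.
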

\begin{proof}Recall that $\Im(r) \in [0, 1/2]$. Using the well-known bound $K_{ir}(u) \ll u^{-\Im(r) - \epsilon}$ for $u>0$, we see that it suffices to prove that \begin{equation}\label{reqe}\sum_{1\le n \le X} \left|\lambda_f\left( \frac{|n| }{(|n|, N^\infty) } \right)\right|^2|n|^{-2\Im(r)} \ll_{R,\epsilon} X^{1-2\Im(r)} (NX)^\epsilon.\end{equation}

The left side of~\eqref{reqe} equals \begin{align*} \sum_{\substack{1 \le n_1 \le X \\ n_1 |N^\infty}} \sum_{\substack{1 \le n_0 \le \frac{X}{n_1} \\ (n_0, N)=1}} \lambda_f(|n_0|)^2 (n_0n_1)^{-2\Im(r)} & \le \sum_{\substack{1 \le n_1 \le X \\ n_1 |N^\infty}} \sum_{1 \le n_0 \le \frac{X}{n_1} } \lambda_f(|n_0|)^2(n_0n_1)^{-2\Im(r)} \\ & \ll_\epsilon \sum_{\substack{1 \le n_1 \le X \\ n_1 |N^\infty}} X^{1 - 2 \Im(r)} (NRX)^\epsilon \\ & \ll_\epsilon  X^{1 - 2 \Im(r)} (NRX)^\epsilon
\end{align*}

where we have used the bound \begin{equation}\sum_{1\le n \le X} \left|\lambda_f(n)\right|^2|n|^{-2\Im(r)} \ll_\epsilon X^{1-2\Im(r)} (NRX)^\epsilon\end{equation} which follows from the analytic properties of the Rankin-Selberg $L$-function (e.g., combine equation (2.28) of~\cite{harcos-michel} with the usual partial summation).

\end{proof}

We can now prove Proposition~\ref{prop:fourier}.
Recall the Fourier expansion $$h(z) =y^{1/2} \sum_{n \ne 0} \rho_\sigma(n)K_{ir}(2 \pi |n| y) e(nx).$$ The tail of the sum, with $|n|y > N^\epsilon$, is negligible because of the decay of the Bessel function. For the remaining terms, we apply the Cauchy-Schwarz inequality:

\begin{align*}|h(z)|^2 &\ll_{R, \epsilon} N^\epsilon y |\rho(1)|^2 \left(\sum_{1\le |n| \le \frac{N^\epsilon}{y}} |\lambda_{\sigma, N}(n)|^2 \right)\left(\sum_{1\le |n| \le \frac{N^\epsilon}{y}}\left|\lambda_f\left( \frac{|n| }{(|n|, N^\infty) } \right)K_{ir}(2 \pi |n| y) \right|^2  \right) \\& \ll_{R, \epsilon} N^\epsilon  \left(\frac{1}{Ny} + \frac{M}{N\sqrt{y}} \right).
\end{align*}

where we used Lemma~\ref{l:firstbound}, Lemma~\ref{l:secondbound} and the estimate $|\rho(1)|^2 \ll N^{\epsilon-1}$ due to Hoffstein-Lockhart~\cite{HL94}.

\section{Proof of the main result}

Recall that $f$ is a Hecke-Maass cuspidal newform for the group $\Gamma_0(N)$ with Laplace eigenvalue $\lambda = 1/4 + r^2$ such that $\langle f, f \rangle_{\Gamma_0(N)} =1$ and $|r| \le R$.

We first deal with the question of proving $$\|f\|_\infty \ll_{R,\epsilon} N^{-1/12 + \epsilon}.$$ Let $z \in \H$. We need to prove that $|f(z)| \ll_{R,\epsilon } N^{-1/12 + \epsilon}.$ Let $M, W, \sigma$ be as in Proposition~\ref{prop:gap} and put $x'+iy' = z' := \sigma^{-1}Wz$. Put $g :=f | \sigma$.
Then, as $f|W = \pm f$, it follows that $|g(z')| = |f(z)|$. So it suffices to prove that $|g(z')| \ll_{R,\epsilon} N^{-1/12 + \epsilon}.$

We first consider the case $M \ge N^{1/12}$. In this case we have by Proposition~\ref{prop:gap} that $y' \gg M^2/N \ge N^{-5/6}$. Using Proposition~\ref{prop:fourier}, we conclude that $$g(z') \ll_{R, \epsilon} N^\epsilon\max\left( \frac1{(N\cdot N^{-5/6})^{1/2}}, \frac{M^{1/2}}{N^{1/2}(M^2/N)^{1/4}}\right) \ll_{R, \epsilon} N^{-1/12+\epsilon}.$$

So we may henceforth assume that $M<N^{1/12}$. Furthermore, we may henceforth assume that $y' < N^{-5/6}$, for otherwise, Proposition~\ref{prop:fourier} finishes the job again. For future reference, we record this as follows.

\begin{equation}\label{e:yconstraints}1 \le  M \ll N^{1/12}, \qquad  \frac{M^2}{N} \ll y'  \ll N^{-5/6}. \end{equation}

Put $\Gamma = \Gamma_0(N;M)$. We note that $g$ is a Maass cusp form on $\Gamma$ that satisfies $$M^{1-\epsilon} \ll_\epsilon \frac{\phi(M)}{(M,2)} = \langle g, g \rangle_{\Gamma} \le M.$$  Let $g' = \frac{g}{\langle g, g \rangle_{\Gamma}^{1/2} }$. Then $\langle g', g' \rangle_{\Gamma} = 1$. It suffices to show that \begin{equation}\label{reeq}|g'(z') |^2\ll_{R,\epsilon} M^{-1}N^{-1/6 + \epsilon}.\end{equation}

By Corollary~\ref{heckecor}, $g'$ satisfies for all $l \equiv 1 \bmod{M}$, $$\sum_{\gamma \in \Gamma_0(N;M)  \bs \Delta(l,N;M) }g'|\gamma = \lambda_f(l) l^{1/2} g.$$ Define $$\mathcal{P}:= \{p \text{ prime }| p \equiv 1 \bmod{M}, \ \Lambda<p<2\Lambda\}, \quad \mathcal{P}^2:= \{p^2: p\in \mathcal{P}\},$$ and $$x_l:=\begin{cases} \sgn(\lambda_f(l)), & l\in \mathcal{P} \cup \mathcal{P}^2 \\ 0& \text{ otherwise}. \end{cases}$$ By embedding the cusp form $g'$ into an orthonormal basis of Maass cusp forms for $\Gamma$ and then using the amplifier method as in~\cite{harcos-templier-1} (with the amplifier $x_l$ defined above), we obtain the inequality $$\frac{\Lambda^2}{M^2} |g'(z')|^2 \ll_{R,\epsilon} (N\Lambda)^\epsilon \sum_{l \ge 1}\frac{y_l}{\sqrt{l}} N(z,l,N;M),$$ where $y_l$ is defined as in Prop.~\ref{prop:ampl}. Using~\eqref{amplesum} and~\eqref{reeq}, we conclude that it suffices to prove the following inequality for some $\Lambda \ge M^2$,

$$\frac{M^2}{\Lambda} + y'N_0+ \frac{M\Lambda^{1/2}}{\sqrt{N}} + \frac{M^2\Lambda^2}{N} \ll N^{-1/6}.$$

Choosing $\Lambda = N^{1/3}$,  using~\eqref{e:yconstraints}, and using $N_0 \le N^{1/2}$, the required inequality follows.
\bigskip

Next, we suppose that there is no integer $M'$ in the range $1<M'<N^{1/6}$ such that $M'^2$ divides $N$. We need to prove that  $$\|f\|_\infty \ll_{R,\epsilon} N^{ \epsilon} \max(N^{-1/6}, N^{-1/4}N_0^{1/4}).$$ Let $z \in \H$. We need to prove that $|f(z)| \ll_{R,\epsilon }N^{ \epsilon} \max(N^{-1/6}, N^{-1/4}N_0^{1/4}).$ Let $M, W, \sigma$ be as in Proposition~\ref{prop:gap} and put $x'+iy' = z' := \sigma^{-1}Wz$. Put $g :=f | \sigma$.
Then, as $f|W = \pm f$, it follows that $|g(z')| = |f(z)|$. So it suffices to prove that $|g(z')| \ll_{R,\epsilon} N^{ \epsilon} \max(N^{-1/6}, N^{-1/4}N_0^{1/4}).$

As before, we can reduce to the case $M<N^{1/6}$ using Prop.~\ref{prop:fourier}. \emph{By our assumption, it follows that $M=1$}. Furthermore, we may assume that $y' \le \frac1{\sqrt{NN_0}}$ for otherwise, Proposition~\ref{prop:fourier} finishes the job again. Proceeding exactly as before, the amplification method reduces us to having to prove the following inequality,

$$\frac{1}{\Lambda} + y'N_0+ \frac{\Lambda^{1/2}}{\sqrt{N}} + \frac{\Lambda^2}{N} \ll_{\epsilon} N^{ \epsilon} \max(N^{-1/3}, N^{-1/2}N_0^{1/2}).$$

Choosing $\Lambda = N^{1/3}$ and using $y' \le \frac1{\sqrt{NN_0}}$, the inequality follows. The proof is complete.

\bibliography{refs-que}

% -----------------
\end{document}